\newcommand\JZout{\bgroup\markoverwith
{\textcolor{red}{\rule[.5ex]{2pt}{0.4pt}}}\ULon}
\newcommand{\Id}{\mathrm{Id}}
\renewcommand{\P}{\mathbb{P}}
\newcommand{\Z}{\mathbb Z}
\newcommand{\R}{\mathbb R}
\newcommand{\N}{\mathbb N}
\newcommand{\E}{\mathbb E}
\newcommand{\Zd}{\mathbb Z^d}
\newcommand{\Znd}{\mathbb Z_n^d}
\newcommand{\Rd}{\mathbb R^d}
\renewcommand{\phi}{\varphi}
\newcommand{\loc}{\mathcal{L}}
\newcommand{\Cfrak}{{\mathfrak C}}
\def\1{{\mathchoice {\rm 1\mskip-4mu l} {\rm 1\mskip-4mu l}
{\rm 1\mskip-4.5mu l} {\rm 1\mskip-5mu l}}}
\newtheorem{theorem}{{\small T}{\scriptsize HEOREM}}[section]
\newtheorem*{theorem*}{{\small T}{\scriptsize HEOREM}}
\newtheorem{corollary}{{\bf{\small C}{\scriptsize OROLLARY}}}[section]
\newtheorem{proposition}{{\bf{\small P}{\scriptsize ROPOSITION}}}[section]
\newtheorem*{proposition*}{{\bf{\small P}{\scriptsize ROPOSITION}}}
\newtheorem{lemma}{{\bf{\small L}{\scriptsize EMMA}}}[section]
\newtheorem{remark}{{\bf{\small R}{\scriptsize EMARK}}}[section]
\newtheorem{definition}{{\bf{\small D}{\scriptsize EFINITION}}}[section]
\newtheorem{induction}{{\bf{\small I}{\scriptsize NDUCTIVE HYPOTHESIS}}}[section]
\newtheorem{assumption}{{\bf{\small A}{\scriptsize SSUMPTION}}}[section]
\renewenvironment{proof}[1]
{\noindent{{\bf{\small{ P}{\scriptsize ROOF}}}.}\hspace{0.1cm} #1} {$\;\qed$\newline}
\newenvironment{proofof}[1]{%
    \par\noindent\textit{Proof of #1:}\quad%
}{\hfill$\square$\par}
\newcommand{\beq}{\begin{eqnarray}}
\newcommand{\eeq}{\end{eqnarray}}
\newcommand{\ba}{\begin{align*}}
\newcommand{\ea}{\end{align*}}
\newcommand{\be}{\begin{equation}}
\newcommand{\ee}{\end{equation}}
\newcommand{\bl}{\begin{lemma}}
\newcommand{\el}{\end{lemma}}
\newcommand{\br}{\begin{remark}}
\newcommand{\er}{\end{remark}}
\newcommand{\bt}{\begin{theorem}}
\newcommand{\et}{\end{theorem}}
\newcommand{\bd}{\begin{definition}}
\newcommand{\ed}{\end{definition}}
\newcommand{\bind}{\begin{induction}}
\newcommand{\eind}{\end{induction}}
\newcommand{\basu}{\begin{assumption}}
\newcommand{\esu}{\end{assumption}}
\newcommand{\bp}{\begin{proposition}}
\newcommand{\ep}{\end{proposition}}
\newcommand{\bc}{\begin{corollary}}
\newcommand{\ec}{\end{corollary}}
\newcommand{\bpr}{\begin{proof}}
\newcommand{\epr}{\end{proof}}
\newcommand{\bi}{\begin{itemize}}
\newcommand{\ei}{\end{itemize}}
\newcommand{\ben}{\begin{enumerate}}
\newcommand{\een}{\end{enumerate}}
\newcommand{\caE}{{\mathrsfs E}}
\newcommand{\caF}{{\mathcal F}}
\newcommand{\caI}{{\mathcal I}}
\newcommand{\caM}{{\mathcal M}}
\newcommand{\IRW}{\text{\normalfont IRW}}
\newcommand{\SIP}{\text{\normalfont SIP}}
\newcommand{\bix}{{\mathbf x}}
\renewcommand{\(}{\left(}
\renewcommand{\)}{\right)}
\newcommand{\nn}{\nonumber}
\newcommand\pFq[6][8]{%
	\begingroup 
	\pFqmuskip=#1mu\relax
	\mathcode`\,=\string"8000
	\begingroup\lccode`\~=`\,
	\lowercase{\endgroup\let~}\pFqcomma
	{}_{#2}F_{#3}{\left[\genfrac..{0pt}{}{#4}{#5};#6\right]}%
	\endgroup
}
\newcommand{\pFqcomma}{\mskip\pFqmuskip}
\newcommand*{\myprime}{^{\prime}\mkern-1.2mu}
\newcommand{\norm}[1]{\left\lVert#1\right\rVert}
\newcommand{\xangle}{11}
\newcommand{\yangle}{133}
\newcommand{\zangle}{270}
\newcommand{\xlength}{1}
\newcommand{\ylength}{1}
\newcommand{\zlength}{1}
\pgfmathsetmacro{\xx}{\xlength*cos(\xangle)}
\pgfmathsetmacro{\xy}{\xlength*sin(\xangle)}
\pgfmathsetmacro{\yx}{\ylength*cos(\yangle)}
\pgfmathsetmacro{\yy}{\ylength*sin(\yangle)}
\pgfmathsetmacro{\zx}{\zlength*cos(\zangle)}
\pgfmathsetmacro{\zy}{\zlength*sin(\zangle)}
\begin{document}

\title{{\bf Hydrodynamic limits and non-equilibrium fluctuations for the Symmetric Inclusion Process with long jumps}}
\author{Mario Ayala, and Johannes Zimmer \\
\small{School of Computation, Information and Technology}\\
\small{Chair for Analysis and Modelling}\\
\small{Technische Universität München}\\
{\small Boltzmannstraße 3}\\
{\small 85747 Garching }
\\
\small{Germany}
}
\maketitle

\begin{abstract}
  We consider a $d$-dimensional symmetric inclusion process (\SIP), where particles are allowed to jump
  arbitrarily far apart. We establish both the hydrodynamic limit and non-equilibrium fluctuations for the
  empirical measure of particles. With the help of self-duality and Mosco convergence of Dirichlet forms, we
  extend structural parallels between exclusion and inclusion dynamics from the short-range scenario to the
  long-range setting. The hydrodynamic equation for the symmetric inclusion process turns out to be of
  non-local type. At the level of fluctuations from the hydrodynamic limit, we demonstrate that the density fluctuation field
  converges to a time-dependent generalized Ornstein–Uhlenbeck process whose characteristics are again
  non-local.
\end{abstract}

\section{Introduction}
Within the field of non-equilibrium statistical mechanics, Interacting Particle Systems (IPS) are simple
models to study interactions in complex systems, both in equilibrium and nonequilibrium. Suitable IPS offer
the
possibility to rigorously formalize the micro-to-macro transition~\cite{kipnis1998scaling,demasi2006mathematical}. 

Here we focus on establishing scaling limits for an IPS that can be viewed as the bosonic counterpart of the
well-studied exclusion
process~\cite{spitzer1970interaction,kipnis1998scaling,ferrari1988non,gonccalves2018density,erhard2020non}:
the symmetric inclusion process, introduced in~\cite{giardina2007duality}. Specifically, we aim to derive the
hydrodynamic limit and investigate non-equilibrium fluctuations in the case where particles can interact with
other particles arbitrarily far apart, thus allowing for long jumps.

In the short-range context, the prototypical hydrodynamic equation for both exclusion and inclusion process is
of diffusive type, i.e., it is known that the empirical measure converges in distribution to a measure which
is absolutely continuous and whose density solves the partial differential equation
\begin{equation}
  \label{eq:intro-short}
  \partial_t \rho(t,x) = \alpha \Delta \rho(t,x), 
\end{equation}
with $\alpha > 0$ and suitable initial conditions. Moreover, fluctuations from the hydrodynamic limit are also
well established for the short-range setting. It is known~\cite{ferrari1988non,ayala2021higher} that the
density fluctuation field converges, in equilibrium, both for the exclusion and inclusion process, to a
limiting field that formally can be understood as solving the stochastic partial differential equation (SPDE)
\[
d Y_t = \alpha \Delta Y_t + \sqrt{\rho (\alpha + \sigma \rho) \nabla^2 } d W_t,
\]
with $d W_t$ being space-time white noise. Here the case $\alpha \in \N$ and $\sigma=-1$ corresponds to the
\mbox{$\alpha$-exclusion} process and $\alpha \in \R_+$ with $\sigma=1$ to inclusion dynamics.

Hydrodynamic limits in the context of long-jumps for the exclusion process were first established by
Jara~\cite{jara2008hydrodynamic}, who formulated a Cauchy problem associated with an infinitesimal generator
of non-local type. The first main result of this paper establishes that structural similarities between the
exclusion and inclusion processes persist even in the long-jump setting. Specifically, we show that the
hydrodynamic equation of the long-jump setting coincides with that of the exclusion process, namely
\[
\partial_t \rho(t,x) = \alpha ( \, K_q-\Id ) \, \rho(t,x),
\]
where $\Id$ denotes the identity operator and
\[
K_q f(x) :=q \star f(x)
\]
is the convolution with $q$.

Subsequently, Gon\c{c}alves and Jara~\cite{gonccalves2018density} investigated fluctuations around the
hydrodynamic limit for the exclusion process in equilibrium, considering initialization from Bernoulli
product measures. They demonstrate that the scaling limit of density fluctuations corresponds to a fractional
generalized Ornstein–Uhlenbeck process. Our main contribution lies in the establishment of
\emph{non-equilibrium fluctuations} for the inclusion process, i.e., fluctuations around states which are
not necessarily in equilibrium, for the long-range situation. We find that the density fluctuation field
converges to a time-dependent generalized Ornstein–Uhlenbeck process of characteristics
$\lbrace \alpha( \, K_{q} -\Id ), \rho(t,\cdot) \, \Gamma_t^{\rho} \rbrace$, namely
\[
d Y_t = \alpha( \, K_{q} -\Id ) Y_t \, dt + \sqrt{\rho(t,\cdot) \, \Gamma_t^{\rho}} d W_t,
\]
with
\[
\Gamma_t^{\rho} f(x) := \int (\alpha + \rho(t,y)) q(y-x) \left( f(y)-f(x) \right)^2 \, dy, 
\]
and $\rho(t,x)$ solving the non-local hydrodynamic equation. Reduced to equilibrium, this coincides with the
results of~\cite{gonccalves2018density} for the exclusion process.

To facilitate our analysis, we rely on self-duality, a property exhibited by both the inclusion and the
exclusion process. As a consequence of this property, the $k$-point correlation functions in the model obey
closed-form equations. The availability of closed-form correlations greatly facilitates the derivation of
scaling limits, as it allows us to control the growth of moments. In addition to duality, we depart from the
traditional assumption that an invariance principle holds for the underlying dynamics of a single particle;
instead we consider the convergence of Dirichlet forms in the sense of Mosco. We assume the Mosco convergence
of Dirichlet forms for only a single particle, and from the convergence of $k$ copies ($k \in \N$) of the
independent motions we derive (using results from~\cite{ayala2024mosco}) the convergence of Dirichlet forms
for the inclusion process. The Mosco convergence of $k$ \SIP-particles enables us, in particular, to
express the quadratic variation of the density fluctuation field in terms of the solution of the hydrodynamic
equation, and hence by standard arguments establish the non-equilibrium fluctuations. To the best of our
knowledge, this is the first time that this approach has been employed to establish non-equilibrium
fluctuations of IPS.

The rest of this work is organized as follows. In Section~\ref{Preliminaries}, we provide the necessary
background, including the precise formulation of the model and the concept of
self-duality. Section~\ref{MainResSect} presents our main results along with the required
assumptions. Subsequently, in Section~\ref{GOUPRig}, we rigorously introduce and establish the uniqueness of
what we term a time-dependent generalized Ornstein–Uhlenbeck process of characteristics
$\lbrace (\alpha , K_{q} -Id ), \rho(t,\cdot) , \Gamma_t^{\rho} \rbrace$. The proofs of our main results are
presented in Section~\ref{Proofs}, which is divided into two subsections: one dedicated to the proof of the
hydrodynamic limits and the other to the fluctuations from the hydrodynamic limit. Finally, in the Appendix,
we include auxiliary notions and results, such as the concept of Mosco convergence of Dirichlet forms, and the
proof of the uniqueness of the time-dependent generalized Ornstein–Uhlenbeck process.

\section{Preliminaries}
\label{Preliminaries}

\subsection{The model}
The \emph{Symmetric Inclusion Process} with parameter $\alpha \in \mathbb{R}_+$, abbreviated
$\text{SIP}(\alpha)$, is an interacting particle system where particles randomly hop on a lattice
$\Zd_n:=\tfrac{1}{n}\mathbb{Z}^d$ for $n \in \N$. Configurations of particles are denoted by Greek letters
$\eta$ and $\xi$, belonging to the set $\Omega_n:= \mathbb{N}^{\Zd_n}$, where we reserve the letter $\xi$ for
configurations with a finite number of particles. Here, the expression $\eta(x)$ represents the number of
particles at the site $x \in \Zd_n$. We are interested in the time evolution of the processes
$\{\,\eta_{r(n)t} \mid t \geq 0\, \}$, where $r(n):=n^\beta$. This process can be described in terms of the
infinitesimal generator
\begin{align}\label{SIPgen}
  \loc_n f(\eta) &= n^\beta \sum_{x, y \in \Znd} q(n(y-x)) \, \eta(x) \left( \alpha + \eta(y) \right)
                   \left(  f(\eta-\delta_x + \delta_y) - f(\eta) \right),
\end{align}
for some $\beta \in (0,2)$. Here the subscript $n$ denotes scaling in space (through $\Znd$) and in time
(through $n^\beta t$), and $q \colon \Rd \to [0,\infty)$ is a symmetric, continuous strictly positive function
with total mass equal to one,
\begin{equation}\label{massoneforq} \int_{\Rd} q(x) \, dx=1, \end{equation}
and such that for all $z \in \Rd$
\begin{equation}\label{RescTransProb}
   q(z)= n^{-(d+\beta)}q(z/n),
\end{equation}
for all $n \in \N$. 

\begin{assumption}
  \label{AssumpNashIneq}
  Assume that there exist $C_1, C_2 >0$ such that
  \begin{equation}
    \frac{C_1}{|x-y|^{d+\beta}} \leq q(x-y) \leq \frac{C_2}{|x-y|^{d+\beta}},
  \end{equation} 
  for all $x, y \in \Rd$ with $x \neq y$.
\end{assumption}
Consider the case $n=1$, i.e., ignoring the time-space rescaling. Then the process governed by the
generator~\eqref{SIPgen} can be interpreted as a system of particles equipped with two Poisson clocks in the
following way. Firstly, the first clock rings at random exponential times of rate $\alpha$. When this first
clock rings for a particle at a position $x \in \mathbb{Z}^d$, the particle independently moves to a new
position $y \in \mathbb{Z}^d$ with a probability of $q(y-x)$, unaffected by other particles.  The second clock
rings at a constant rate of one. The particle transitions in this case from position $x$ to $y$ at a rate
proportional to $\eta(y) q(y-x)$.  The process is called inclusion process as a particle at position $x$
  is attracted by the $\eta(y)$ particles position $y$.


\subsection{Self-duality}
The process $\SIP(\alpha)$ satisfies a self-duality property that will facilitate the derivation of the main
results of this work. The notion of self-duality is analogous to that of integrable systems in the sense that
IPS satisfying a moment duality are systems for which the BBGKY hierarchy closes, and as a consequence of this,
the \mbox{$k$-particle} correlation functions obey closed-form
equations. 
Here we introduce this notion in terms of semigroups and generators. Moreover, we explain the additional point
of view of self-duality in terms of compatible coordinate processes.

\subsubsection{Self-duality in terms of semigroups}
Let us denote the set of configurations with a finite number of particles by $\Omega_n^{f} $,
\be
\Omega_n^{f} := \mathop{\bigcup}_{ k \in \N} \Omega_n^{(k)} ,
\end{equation}
where
\be
\Omega_n^{(k)}  = \Big\{ \xi \in \Omega_n : \|\xi\|:= \sum_{x \in \Zd} \xi(x) = k \Big\}.
\end{equation}
A \emph{self-duality function} is a function $D\colon\Omega_n^{f} \times \Omega_n \to\R$
such that
\begin{equation}\label{dual1}
\E_{\eta_0} \big[D(\xi_0,\eta_t)\big]=\E_{\xi_0} \big[D(\xi_t, \eta_0)\big],
\end{equation}
for all $\xi_0 \in \Omega_n^{f}$ and $\eta_0 \in \Omega_n$. Notice that in the left hand side of \eqref{dual1}
we are computing the conditional expectation of the duality function evaluated at the time-evolved $\eta_t$
given the knowledge of the initial configuration $\eta_0$, while on the right hand side we are evolving the
simpler configuration $\xi_t$ given the information $\xi_0$.

For the $\SIP(\alpha)$ the self-duality functions are of factorized form
\begin{equation}
  \label{eq:dual-d}
  D(\xi,\eta)=\prod_{x\in\Znd} d(\xi(x),\eta(x)),
\end{equation}
where the single-site (triangular) duality functions are given by
\begin{equation}\label{classicalsingleSIP}
d(m,n) := \1_{\{m \leq n\}} \, \frac{n!}{(n-m)!} \frac{\Gamma(\alpha)}{\Gamma(\alpha+m)}.
\end{equation}
Notice that these duality functions are multinomials on the $\eta$ variables with degree equal to the number
of particles in the finite configuration $\xi$. In particular, we have $d(0,n) = 1$ and
$d(1,n) = \frac{n!}{(n-1)!} \cdot \frac {\Gamma(\alpha)}{\Gamma(\alpha+1)} = n \cdot \frac 1 \alpha$, hence
with $n = \eta(x)$
\begin{equation}
\label{eq:D-eta}
    \eta(x)= \alpha \, D(\delta_x,\eta).
\end{equation}

\subsubsection{Self-duality in terms of generators}
The self-duality relation~\eqref{dual1} can also be express at the level of generators as
\begin{equation}\label{GenselfdualityIntro}
\loc_n D(\xi,\cdot)(\eta) =\loc_n^{(k)} D(\cdot, \eta)(\xi),
\end{equation}
again for all $\xi \in \Omega_n^{f}$ and $\eta \in \Omega_n$, where $\loc_n$ denotes the
generator~\eqref{SIPgen} and $\loc_n^{(k)}$ its restriction to $\Omega_n^{(k)}$. Whenever the choice of
$k \in \N$ is clear, we denote by $\{\, \xi(t) \mid t \geq 0 \,\}$ the $\Omega_n^{(k)} $-valued Markov
process, namely the process with generator
\begin{equation}\label{SIPgenk}
\loc_n^{(k)} f(\xi) = n^\beta  \sum_{x,y \in \Znd} q(n(y-x)) \xi(x) ( \alpha +  \xi(y))(f(\xi-\delta_x +\delta_y)-f(\xi))
\end{equation}
acting on functions $f\colon\Omega_n^{(k)} \to \R$.

\subsubsection{Self-duality in coordinate notation}
The finite process $\{\, \xi(t) \mid t \geq 0 \,\}$ can be realized in terms of a compatible
(see~\cite{carinci2019consistent} for details on this notion) coordinate process on $\Z_n^{dk}$, where
particles have fixed labels, namely
\begin{equation}
  X^{(k)}(t)= (X_1(t), \ldots, X_k(t)), \qquad X_i(t)\in \Znd, \quad
  \forall i=1, \ldots, k, \end{equation}
and $X_i(t)$ is the position of the $i$th particle at time $t\ge 0$. More
precisely, for $\mathbf x \in \mathbb Z^{kd}_n$ the compatible configuration
$\xi(\mathbf x) \in \Omega_n^{(k)} $ is given by
\begin{equation} \xi(\bix)=\(\xi(\bix)(y), y\in \Znd\)\qquad \text{with}
\qquad \xi(\mathbf x)(y)=\sum_{i=1}^k \1_{x_i=y}.
\end{equation}
The evolution of the coordinate process can be described in terms of the infinitesimal generator
\begin{equation}\label{gencoord}
L_n^{(k)} f(\mathbf x) = n^\beta \sum_{i=1}^{k} \sum_{r \in \Zd} q(r) \Bigg( \alpha +  \sum_{\substack{j=1\\ j \neq i}}^k \mathbf 1_{x_j=x_i +r/n} \Bigg) \( f(\mathbf x^{i,i+r}) -f(\mathbf x) \),
\end{equation}
where $\mathbf x^{i,i+r}$ denotes $\mathbf x$ after moving the particle in position $x_i$ to position $x_i+r/n \in \Znd$. 

By compatibility, see~\cite{carinci2019consistent}, the self-duality relations can then also be expressed in
terms of the coordinate process generator as
\begin{equation}\label{GenselfdualityIntrocoord}
\loc_n D(\xi,\cdot)(\eta) =\loc_n^{(k)} D(\cdot, \eta)(\xi) = L_n^{(k)} D(\xi(\cdot), \eta)(\mathbf x),
\end{equation}
or, equivalently, in terms of semigroups as
\begin{equation}\label{dual1coord}
\E_\eta \big[D(\xi,\eta_t)\big]=\E_\xi \big[D(\xi_t, \eta)\big] = \E_{\mathbf x} \big[D(\xi(\cdot), \eta)(X^{(k)}(t))\big],
\end{equation}
in both cases for all $\xi \in \Omega_n^{f} , \eta \in \Omega_n$, and $\mathbf x \in \mathbb Z_n^{kd}$.

\begin{remark}
  In this work we only apply self-duality with $k \in \lbrace 1, 2, 3, 4 \rbrace$. Self-duality in terms of
  infinitesimal generators allows us to \emph{close equations} (cf. Lemma~\ref{LemmaDynkin} and
  Lemma~\ref{LemmaDrift}), while self-duality at the level of semigroups permits us to control the
  correlations needed to show the vanishing of quadratic variations for the case of hydrodynamic limits
  (cf. Lemma~\ref{LemmaDynkin}) and the validity of key replacements (cf. Lemma~\ref{ConvQuadraVarFluc}) in
  the case of fluctuations.
\end{remark}

\section{Assumptions and main results}\label{MainResSect}


Let $T>0$. The sequence configuration process $\{\, \eta_{r(n)t} \mid t \in [0,T] \, , n \in \N \}$ has the
state space $\Omega^{(n)}$ corresponding to the rescaled lattice $\Zd_n$. We denote by $\P_n$, respectively
$\E_n$, the probability measure, respectively expectation, induced by a given sequence of initial measures
$\{ \mu_n \mid n \in \N \}$ (converging suitably as in Assumption~\ref{HydroAssump} below) and the sequence of
rescaled processes $\lbrace \eta_{r(n)t} \rbrace$ on the path-space $D([0,T];\Omega^{(n)})$. We are interested
on the derivation of hydrodynamic limits and non-equilibrium fluctuations from the hydrodynamic limit for this
sequence of processes. More precisely, we derive scaling limits for the measure-valued processes
\begin{equation}\label{EmpAveg}
    \pi_n(t) := \frac{1}{n^d} \sum_{x \in \Zd} \eta_{r(n)t}(x/n) \, \delta_{x/n},
\end{equation}
and its central limit theorem analogue, the distribution-valued process
\begin{equation}\label{centereddensity}
Y_t^{(n)}(\eta):= \sqrt{n^d} \left( \pi_n(t) - \E_n \left[ \pi_n(t) \right] \right). 
\end{equation} 

\subsection{Assumptions}
In this section we state the following key assumptions made in addition to
  Assumption~\ref{AssumpNashIneq}. In essence, we assume the initial data is suitably chosen, there is an
  invariance principle for the \emph{one-particle} dynamics (which will allow us to rewrite the action of the
  generator in the empirical measure in terms of the one-particle generator) and we assume that the weak
  solution of the hydrodynamic limit exists and is unique.
  
We start with the class of admissible initial measures for the particle configuration at time zero.

\begin{definition}[Measure associated to a density profile]\label{DefMeasAssoc}
  We say that a sequence of measures $\lbrace \mu_n : n \in \N \rbrace$ is \emph{associated} to a density
  profile $\rho_0 \colon \Rd \to \R_{+}$ if the sequence of random measures
  $\lbrace \pi_n(0): n \in \N \rbrace$ defined in~\eqref{EmpAveg} converges in probability, with respect to
  $\P_n$, to the deterministic measure $\rho_0(u) \, du$.
\end{definition}

\begin{assumption}\label{HydroAssump}
The process $\{\,  \eta_{r(n)t} \, \}_{t \geq 0}$ is such that its initial measure $\nu_{\rho_0}^n$ is associated to a profile $\rho_0 \in L^1(\Rd) \cap L^\infty(\Rd)$ in the sense of Definition~\ref{DefMeasAssoc}, and for all $n \in \N$ 
there exist constants $C_k >0$, independent of $n$, such that 
    \begin{equation}
    \sup_{x_1, \ldots, x_k \in \Znd} \left|\alpha^k \int   D\left(\sum_{j=1}^k \delta_{x_j}, \eta \right) \, \nu_{\rho_0}^n(d\eta) - \prod_{j=1}^k \int  \eta(x_j) \,  \nu_{\rho_0}^n(d\eta) \right| \leq \frac{C_k}{n}
    \end{equation} 
    for all $k \leq 4$.
\end{assumption}

\br Notice that Assumption~\ref{HydroAssump} is natural in the derivation of non-equilibirum fluctuations for
interacting particle systems. See for example~\cite[Section 1.2]{ferrari1988non} or more recently the
assumptions on the initial measures in~\cite[Theorem 2.4 and Theorem 2.5]{erhard2020non}.  \er

We now come to the core assumption, namely the analogue of an invariance principle for the one-particle
  dynamics.  Let us denote by $X^{(n)}(t)$ the Markov process on $\Zd_n$ with evolution described by the
\emph{one-particle} generator
\begin{equation}\label{SIPonePartGenResc} 
L_n^{(1)}f(x/n) = r(n) \sum_{y \in \Zd} q(y-x) (f(y/n) -f(x/n)).
\end{equation} 
For the case of finite range interaction, the underlying
\emph{one-particle} random walker is usually implicitly assumed or shown to satisfy an invariance
principle. Here, we will instead assume that the one-particle dynamics is close in some sense to the dynamics
of the jump process whose dynamics can be described in terms of the infinitesimal generator acting on
$f \in C_c^2(\Rd)$ as
\begin{equation}\label{DefL} 
L f(x) = \int_{\Rd} q(y-x) \left( f(y) -f(x) \right) \, dy =: (K_{q} -\Id )f(x).
\end{equation} 

\begin{assumption}\label{RemarkConseqInvariance} 
  Let $\mu_c$ and $dx$ denote the counting measure on $\Zd_n$ and the Lebesgue measure on $\Rd$. Assume that
  $q$ is such that the following holds.
\begin{enumerate}
    \item The sequence of Dirichlet forms 
      \begin{equation}
        \caE_n(f) = \frac{r(n)}{n^d} \sum_{x, y \in \Zd} q(y-x) (f(y/n) -f(x/n))^2,
      \end{equation} 
      acting on functions $f$ in $H_n:= L^2\left(\tfrac{1}{n}\Zd, \frac{1}{n^d} \mu_c\right)$, converges in
      the sense of Definition \ref{MoscoDef} to the Dirichlet form associated with the infinitesimal
      generator~\eqref{DefL}, namely the form
      \begin{align}\label{DefDirFormMac}
        \caE(f) &= \int_{\Rd} \int_{\Rd} q(y) (f(x+y)-f(x))^2 dy \, dx,
      \end{align} 
      acting on elements of the Hilbert space $H:=L^2(\Rd, dx)$.  Moreover, we assume that this convergence
      holds under the convergence of Hilbert spaces
      \begin{equation}
        H_n \to H, 
      \end{equation} 
      where the operators $\Phi_n$ describing the convergence are 
      given by the restrictions operators
      from the smooth compactly supported test functions to $H_n$, see Subsection~\ref{sec:app-Hilbert}. 
    \item For all $\phi \in C_k^\infty(\Rd)$ we have
    \begin{equation}\label{ApproxL01} 
    \lim_{n \to \infty} \frac{1}{n^d} \sum_{x \in \Zd} \left| L_n^{(1)} \phi(x/n) - L \phi(x/n) \right| = 0
    \end{equation} 
    and 
    \begin{equation}\label{ApproxL1}
    \lim_{n \to \infty} \sup_{x \in \Zd} \left| L_n^{(1)} \phi(x/n) - L\phi(x/n) \right| = 0.
    \end{equation} 
\end{enumerate}
\end{assumption}

Finally, we assume that the Cauchy problem of $L$ with initial condition
\mbox{$\rho_0 \in L^\infty(\Rd) \cap L^2(\Rd)$} has a unique weak solution.

\begin{assumption}\label{AssumpUniqueWeakSolu}
  Let $T>0$, and $L$ be given as in~\eqref{DefL}. Assume that $\rho_0 \in L^\infty(\Rd) \cap L^2(\Rd)$ is such
  that there exists a unique $\rho \colon [0,T] \times \Rd \to \R$ being a weak solution to the Cauchy problem
  \begin{equation}
    \begin{cases}
      \partial_t \rho(t,x)=  \alpha ( \, K_{q} -\Id )\rho(t,x) , \nn \\
      \rho(0, x) = \rho_0(x).
    \end{cases}
  \end{equation}  
\end{assumption}

\br  
This assumption has been shown to hold for a family of functions $q$, including 
\begin{equation}
q(x) = \frac{c}{\norm{x}^{d+\beta}},  \nn 
\end{equation}
for some $c>0$. See for example Section 8 of~\cite{jara2008hydrodynamic}.
\er


\subsection{First main result: Hydrodynamic limit}


We now state the hydrodynamic limit, complementing the short-range interaction case
  of~\cite{ayala-valenzuela_hydrodynamic_2016}, which results in the local hydrodynamic
  equation~\eqref{eq:intro-short}.

\begin{theorem}\label{MainHydro}
  Let $T>0$. Under Assumptions~\ref{HydroAssump}, \ref{RemarkConseqInvariance} and~\ref{AssumpUniqueWeakSolu},
  the sequence of measure-valued processes $\lbrace \pi_n(t): t \in [0,T], n \in \N \rbrace$ converges weakly
  to a deterministic measure which is absolutely continuous with respect to the Lebesgue measure. The density
  $\rho(t,x)$ solves
  \begin{equation}\label{HydroEquationLJ} 
    \begin{cases}
      \partial_t \rho(t,x)= \alpha  (\, K_{q} -\Id ) \rho(t,x)\\
      \rho(0, x) = \rho_0(x),
    \end{cases}
  \end{equation} 
  where $\rho_0$ is the profile mentioned in Assumption~\ref{HydroAssump}.
\end{theorem}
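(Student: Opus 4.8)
The plan is to follow the classical Dynkin-martingale route, in which self-duality does the essential work of closing the equation for the first moment so that no Boltzmann--Gibbs replacement is needed at the level of the density. For a test function $\phi \in C_c^\infty(\Rd)$ I would introduce the martingale
\[
M_t^{(n)}(\phi) = \langle \pi_n(t),\phi\rangle - \langle \pi_n(0),\phi\rangle - \int_0^t \loc_n \langle \pi_n(s),\phi\rangle\,ds .
\]
The decisive computation is that $\loc_n$ acts in closed form on the linear functional $\langle\pi_n,\phi\rangle=\tfrac1{n^d}\sum_x\eta(x)\phi(x)$: expanding $\loc_n\eta(x_0)$ and using the symmetry of $q$ to recombine loss and gain terms, the quadratic contributions $\eta(x_0)\eta(y)$ cancel, leaving $\loc_n\eta(x_0)=\alpha\,n^\beta\sum_y q(n(y-x_0))(\eta(y)-\eta(x_0))$, i.e. $\alpha$ times the one-particle generator. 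A summation by parts then gives $\loc_n\langle\pi_n,\phi\rangle=\alpha\langle\pi_n,L_n^{(1)}\phi\rangle$ (this is the manifestation of self-duality, cf. Lemma~\ref{LemmaDynkin}), and Assumption~\ref{RemarkConseqInvariance}(2) lets me replace $L_n^{(1)}\phi$ by $L\phi=(K_q-\Id)\phi$ up to a uniformly vanishing error, so the drift is asymptotically $\alpha\langle\pi_n(s),L\phi\rangle$.

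I would then assemble the three ingredients needed to pass to the limit. First, \emph{uniform moment bounds}: using self-duality with $k=1,2$ together with the closeness-to-product initial data of Assumption~\ref{HydroAssump}, I would represent the one- and two-point functions through the corresponding independent-label coordinate processes and show $\sup_{n,\,s\le T,\,x}\E_n[\eta_{r(n)s}(x)]<\infty$ and $\sup_{n,\,s\le T,\,x,y}\E_n[\eta_{r(n)s}(x)\eta_{r(n)s}(y)]<\infty$. Second, \emph{vanishing of the martingale}: its predictable quadratic variation is the carré du champ
\[
\langle M^{(n)}(\phi)\rangle_t = \int_0^t \frac{1}{n^{3d}}\sum_{x,y\in\Znd} q(y-x)\,\eta_{r(n)s}(x)\big(\alpha+\eta_{r(n)s}(y)\big)\big(\phi(y)-\phi(x)\big)^2\,ds ,
\]
and feeding the two-point bound into this reduces its expectation to $O(n^{-d})$ times $\tfrac1{n^{2d}}\sum_{x,y}q(y-x)(\phi(y)-\phi(x))^2$, which converges to the limiting form $\caE(\phi)$ by Assumption~\ref{RemarkConseqInvariance}(1); hence $\E_n[\langle M^{(n)}(\phi)\rangle_t]\to 0$ and $M^{(n)}(\phi)\to 0$ in $L^2$. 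Third, \emph{tightness}: the drift has time-derivative bounded by $\alpha\|L_n^{(1)}\phi\|_\infty$ times the finite total mass, hence is uniformly Lipschitz in time, and together with the vanishing quadratic variation the Aldous--Rebolledo criterion yields tightness of $\{\langle\pi_n(\cdot),\phi\rangle\}_n$; tightness of $\{\pi_n\}$ in the Skorohod space of measures then follows by a Mitoma-type argument.

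Finally I would identify the limit and conclude. Along any subsequence $\pi_n\Rightarrow\pi$, the vanishing of $M^{(n)}(\phi)$ and the convergence of the drift force every limit point to satisfy $\langle\pi(t),\phi\rangle-\langle\pi(0),\phi\rangle=\alpha\int_0^t\langle\pi(s),L\phi\rangle\,ds$, with $\pi(0)=\rho_0(x)\,dx$ by Assumption~\ref{HydroAssump}. The uniform $L^\infty$ control of the mean density, propagated by the contractive one-particle semigroup, forces $\pi(t)$ to be absolutely continuous with an $L^\infty$ density $\rho(t,\cdot)$, and the symmetry of $q$ (self-adjointness of $L$) turns the weak equation into the weak formulation of $\partial_t\rho=\alpha(K_q-\Id)\rho$ with datum $\rho_0$. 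Uniqueness of this weak solution, Assumption~\ref{AssumpUniqueWeakSolu}, then identifies all subsequential limits and upgrades convergence to the full sequence.

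I expect the main obstacle to be the interplay, in the second step, between the heavy-tailed kernel and the correlation control: one must propagate uniform bounds on $\E_n[\eta_{r(n)s}(x)\eta_{r(n)s}(y)]$ for the inclusion process---whose interaction is \emph{attractive}, so correlations are a priori prone to grow---through the duality representation, and then match them against the long-range Dirichlet sum $\tfrac1{n^{2d}}\sum_{x,y}q(y-x)(\phi(y)-\phi(x))^2$ so that the quadratic variation genuinely vanishes. A secondary technical point is that $L\phi=q\star\phi-\phi$ is not compactly supported, so the topology on the space of measures and the exclusion of escape of mass must be handled with some care.
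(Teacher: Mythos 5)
Your proposal follows the same architecture as the paper's proof: the Dynkin martingale whose drift closes exactly because one-particle self-duality (equivalently, your direct cancellation of the quadratic terms $\eta(x_0)\eta(y)$) reduces $\loc_n$ acting on the empirical measure to the one-particle generator $L_n^{(1)}$ (Lemma~\ref{LemmaDynkin}); one- and two-point moment bounds obtained from duality with $k=1,2$ dual particles together with Assumption~\ref{HydroAssump} (Propositions~\ref{Growthcontrolone} and~\ref{Growthcontroltwo}); the observation that $r(n)\,q(n(y-x))=n^{-d}q(y-x)$ via~\eqref{RescTransProb} makes the expected quadratic variation $O(n^{-d})$ times a discrete Dirichlet sum that stays bounded (this is exactly~\eqref{LastEquaSupM}); tightness from these same estimates (you invoke Aldous--Rebolledo and a Mitoma-type reduction where the paper uses compact containment plus a modulus-of-continuity bound, Lemma~\ref{LemmaTightHydro}, but the inputs are identical); and finally identification of limit points as weak solutions of~\eqref{HydroEquationLJ}, concluded by Assumption~\ref{AssumpUniqueWeakSolu}. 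Your bookkeeping of the factor $\alpha$ in the closed drift is, if anything, more careful than the paper's.

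There is, however, one genuine gap: the absolute-continuity step. You claim that ``uniform $L^\infty$ control of the mean density, propagated by the contractive one-particle semigroup, forces $\pi(t)$ to be absolutely continuous.'' The bound $\E_n[\eta_{r(n)t}(x/n)]\leq C_1+\norm{\rho_0}_\infty$ controls only the \emph{intensity} measure of the (a priori random) limit point $\pi$, and a random measure can have a bounded, absolutely continuous intensity while being almost surely singular (a Dirac mass at a uniformly distributed point is the standard example). Since Assumption~\ref{AssumpUniqueWeakSolu} gives uniqueness only within the class of function-valued weak solutions, you may not invoke it before knowing that almost every limit path is absolutely continuous, and determinism of the limit cannot be extracted from the weak equation alone --- so the argument as stated is circular at this point. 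The paper closes this loop pathwise: for a Lebesgue-null set $S$ it inserts the time-dependent test function $\phi(s,x)=S_{(t-s)}^{L}\1_{S}(x)$ into the space-time weak formulation~\eqref{spacetimeweakform}; since $(L+\partial_s)\phi=0$ and $\pi_0\big(S_t^L\1_S\big)=\int\rho_0\,S_t^L\1_S\,dx=0$ (Lebesgue measure being invariant for $S_t^L$), one obtains $\pi_t(S)=0$ for every realization, and only then applies uniqueness. Alternatively, you could repair your route with tools you already listed: the two-point estimates, combined with the factorization in the limit of the two-particle semigroup (Theorem~\ref{Moscoconvkparticles}), give $\var\big(\pi_t^{(n)}(\phi)\big)\to 0$, so each $\pi_t(\phi)$ is deterministic and bounded by $(C_1+\norm{\rho_0}_\infty)\norm{\phi}_{L^1}$; for a \emph{deterministic} measure this bound does imply absolute continuity with bounded density. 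Either patch is needed; without one, the identification step fails.
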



\begin{remark}
  \label{RemInitdata}
  Notice that Assumption~\ref{HydroAssump} implies that for every $\delta >0$, we have
  \begin{equation}\label{HydroAssumpEquation}
    \lim_{n \to \infty} \P_{\mu_n} \left(  \left| \frac{1}{n^d} \sum_{x \in \Zd} \eta_{0}(x/n) \,
        \phi(x/n) - \int_{\Rd} \phi(u) \, \rho_0(u) \, du  \right| > \delta  \right) = 0,
  \end{equation} 
  for every continuous function $\phi \colon\Rd \to \R$ with compact
  support. Equation~\eqref{HydroAssumpEquation} determines the initial condition of the hydrodynamic equation.
\end{remark}

\begin{remark} 
  Non-local hydrodynamic results are already known for 
  the symmetric exclusion process with long jumps~\cite{jara2008hydrodynamic}.
  We establish an analogous 
  result for inclusion interaction, and show how, with the help of self-duality, the results
  of~\cite{jara2008hydrodynamic} can be potentially extended for exclusion dynamics where the maximum
  occupancy per site is determined by $\alpha \in \N$.
\end{remark}

\subsection{Second main result: Non-equilibrium fluctuations}

In the case of $\SIP(\alpha)$ with a symmetric transition probability restricted to nearest neighbors,
starting from a reversible measure with homogeneous density $\rho$, it is known (see for
example~\cite{ayala2021higher}) that the sequence of processes
\mbox{$\{\, Y_t^{(n)}(\cdot,\eta) \mid n \in \N, t \in [0,T] \, \}$} converges in distribution to the
distribution-valued process $\{\, Y_t \mid t \in [0,T] \, \}$ which formally solves the Ornstein–Uhlenbeck
SPDE
\begin{equation}\label{GOUSIPalpha} 
  d Y_t = \alpha \Delta Y_t + \sqrt{\rho (\alpha + \rho) \nabla^2 } d W_t:= \alpha \Delta Y_t
  + \sqrt{\rho (\alpha + \rho)  } \nabla d W_t,
\end{equation} 
where loosely speaking the term $\sqrt{\rho (\alpha + \rho) \nabla^2} d W_t$ has to be interpreted by saying
that the integral
\begin{equation}\label{intnablawhite}  
\int_0^t \sqrt{\rho (\alpha + \rho) \nabla^2} d W_s(\phi)
\end{equation} 
is a continuous martingale of quadratic variation
\begin{equation}\label{intnablawhitequadvar}
\rho (\alpha + \rho) \norm{\nabla \phi(x)}^2 t
\end{equation} 
for all test functions $\phi \in S(\Rd)$, and where $\norm{\cdot}$ denotes the usual $L^2$-norm. This can be
made rigorous in terms of martingale problems in the spirit of Chapter 11 in~\cite{kipnis1998scaling}.

Before stating our main result relative to fluctuations, let us introduce the non-local operator
\begin{equation}\label{GenTimOUSPDE}
\Gamma_t^{\rho} f(x) := \int (\alpha + \rho(t,y)) q(y-x) \left( f(y)-f(x) \right)^2 \, dy, 
\end{equation} 
with $\rho(t,x)$ denoting the unique solution to the hydrodynamic equation \eqref{HydroEquationLJ}.

\begin{theorem}
  \label{FluctTheorem}
  Let $T>0$. Under Assumptions~\ref{HydroAssump}, \ref{RemarkConseqInvariance},
  and~\ref{AssumpUniqueWeakSolu}, the sequence of processes
  $\{\, Y_t^{(n)}(\cdot,\eta) \mid n \in \N, t \in [0,T] \, \}$ converges in distribution, as $n \to \infty$,
  with respect to the $J_1$-Skorohod topology of $D([0,T],S(\Rd))$ to a process
  $\{\, Y_t \mid t \in [0,T] \, \}$, which \emph{formally} is the unique solution of the generalized
  time-dependent Ornstein-Uhlenbeck process of characteristics
  $\lbrace L, \rho(t,\cdot) \, \Gamma_t^{\rho} \rbrace$:
  \begin{equation}\label{TDGOUDK} 
    d Y_t =\alpha ( \, K_{q} -\Id ) Y_t dt + \sqrt{\rho(t,\cdot) \, \Gamma_t^{\rho}} d W_t,
  \end{equation} 
  with $d W_t$ being space-time white noise, and $\rho(t,x)$ solves the hydrodynamic equation
  \eqref{HydroEquationLJ}.
\end{theorem}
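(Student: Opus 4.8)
The plan is to characterize the limit through the martingale problem associated with the generalized time-dependent Ornstein--Uhlenbeck process~\eqref{TDGOUDK}, whose well-posedness is established in Section~\ref{GOUPRig}, and to prove convergence by the classical three-step scheme: a Dynkin martingale decomposition of $Y^{(n)}_t(\phi)$, tightness of the family, and identification of every subsequential limit. Throughout, self-duality is the engine that reduces computations on the many-body generator $\loc_n$ to one- and two-particle quantities, so that Assumption~\ref{RemarkConseqInvariance} on the single-particle dynamics suffices. Concretely, I would fix $\phi\in S(\Rd)$ and apply Dynkin's formula (Lemma~\ref{LemmaDynkin}) to write
\begin{equation}
  Y^{(n)}_t(\phi)=Y^{(n)}_0(\phi)+\int_0^t \mathcal{D}^{(n)}_s(\phi)\,ds+M^{(n)}_t(\phi),\nonumber
\end{equation}
with $M^{(n)}_t(\phi)$ a martingale and $\mathcal{D}^{(n)}_s(\phi)$ the drift. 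Because $Y^{(n)}_t(\phi)$ is a \emph{linear} functional of $\eta$, the identity $\eta(x)=\alpha D(\delta_x,\eta)$ from~\eqref{eq:D-eta} together with the generator self-duality~\eqref{GenselfdualityIntro} collapses the action of $\loc_n$ onto the single-particle generator acting on $\phi$, giving $\mathcal{D}^{(n)}_s(\phi)=Y^{(n)}_s(\alpha L^{(1)}_n\phi)$ with no closure error at first-moment level; this is the content of Lemma~\ref{LemmaDrift}. The uniform and $\ell^1$ approximations~\eqref{ApproxL1} and~\eqref{ApproxL01} of Assumption~\ref{RemarkConseqInvariance}, together with a second-moment bound on the field (from $k\le2$ duality and Assumption~\ref{HydroAssump}), then let me replace $\alpha L^{(1)}_n\phi$ by $\alpha(K_q-\Id)\phi$ inside the field, so that the drift converges to $\int_0^t Y_s\big(\alpha(K_q-\Id)\phi\big)\,ds$.

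The decisive step is the quadratic variation. A carr\'e-du-champ computation for $\loc_n$ yields
\begin{equation}
  \langle M^{(n)}(\phi)\rangle_t=\int_0^t\frac{n^\beta}{n^d}\sum_{u,v\in\Znd}q(n(v-u))\,\eta_{r(n)s}(u)\big(\alpha+\eta_{r(n)s}(v)\big)\big(\phi(v)-\phi(u)\big)^2\,ds,\nonumber
\end{equation}
and the target is convergence to $\int_0^t\!\int_{\Rd}\rho(s,x)\,\Gamma^{\rho}_s\phi(x)\,dx\,ds$, the variance predicted by $\Gamma^{\rho}_s$ in~\eqref{GenTimOUSPDE}. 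Here I would use self-duality at the semigroup level: expanding $\eta(u)$ through $k=1$ duality and the product $\eta(u)\eta(v)$ through $k=2$ duality, the hydrodynamic limit (Theorem~\ref{MainHydro}) gives $\E[\eta_{r(n)s}(u)]\to\rho(s,u)$, while factorization of two-point correlations $\E[\eta_{r(n)s}(u)\eta_{r(n)s}(v)]\to\rho(s,u)\rho(s,v)$ (propagation of chaos, obtained from the $k=2$ coordinate process~\eqref{gencoord} and the Mosco convergence of its Dirichlet form via~\cite{ayala2024mosco}) performs the replacement $\eta(u)(\alpha+\eta(v))\rightsquigarrow\rho(s,u)(\alpha+\rho(s,v))$. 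By the scaling relation~\eqref{RescTransProb} one has $n^\beta q(n(v-u))=n^{-d}q(v-u)$, so the prefactor becomes $n^{-2d}q(v-u)$ and the double sum is a Riemann sum converging to $\int\!\int\rho(s,u)(\alpha+\rho(s,v))q(v-u)(\phi(v)-\phi(u))^2\,du\,dv=\int\rho(s,x)\Gamma^{\rho}_s\phi(x)\,dx$. This replacement is Lemma~\ref{ConvQuadraVarFluc}.

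For tightness I would invoke Mitoma's criterion to reduce tightness of $\{Y^{(n)}\}$ in $D([0,T],S'(\Rd))$ to that of the real-valued processes $\{Y^{(n)}_\cdot(\phi)\}$ for each $\phi$, and then verify Aldous' condition for the pieces of the decomposition above; the moment estimates this requires, up to fourth order, are furnished by the $k\le4$ coordinate processes and are precisely what Assumption~\ref{HydroAssump} controls at time zero. Combining the convergence of the initial field $Y^{(n)}_0$ (again controlled by Assumption~\ref{HydroAssump}), the convergence of the drift, and the convergence of the quadratic variation, every subsequential limit is a continuous process solving the martingale problem with characteristics $\{\alpha(K_q-\Id),\,\rho(t,\cdot)\Gamma^{\rho}_t\}$. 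The uniqueness proved in Section~\ref{GOUPRig} then upgrades tightness and identification to convergence of the full sequence.

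I expect the main obstacle to be the quadratic-variation replacement. Under Assumption~\ref{AssumpNashIneq} the kernel behaves like $|z|^{-(d+\beta)}$, so the double sum receives genuine contributions from pairs $(u,v)$ at \emph{macroscopic} separation; unlike the short-range setting there is no local one-block reduction, and one must make the correlation estimates and the passage to the integral uniform enough to dominate these heavy tails. Turning the duality-based control of two-point correlations into a bound strong enough to close this step, uniformly in $n$ and in the separation $|u-v|$, is where the real difficulty lies.
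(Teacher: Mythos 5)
Your proposal follows essentially the same route as the paper's proof: the Dynkin decomposition with drift closed by one-particle self-duality (Lemma~\ref{LemmaDrift}), the quadratic-variation replacement via $k\le 2$ duality plus Mosco/semigroup convergence (Lemma~\ref{ConvQuadraVarFluc}), Mitoma--Aldous tightness using the $k\le 4$ correlation control of Assumption~\ref{HydroAssump}, and identification of limit points through the martingale problem whose uniqueness is Proposition~\ref{PropoUniquenessGTOUP}. The one refinement worth noting is that the martingale CLT needs convergence in probability of the quadratic variation, not merely of its mean, and this is precisely where the paper's three- and four-point correlation estimates (Propositions~\ref{GrowthcontrolFluc} and~\ref{GrowthcontrolFluc4points}) enter the variance bound \eqref{VanishSecondQuad}, a role your sketch assigns mostly to tightness.
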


Analogous to the short-range situation, and at a formal level, the term $\sqrt{\rho(t,\cdot) \,\Gamma_t^{\rho}} d W_t$ has to be interpreted by saying that the integral
\begin{equation}\label{intGammatwhite}  
\int_0^t \sqrt{\rho(t,\cdot) \,\Gamma_t^{\rho}} d W_s(\phi)
\end{equation} 
is a continuous martingale of quadratic variation
\begin{equation}\label{intGammatwhitequadvar}
 \int_0^t \norm{ \sqrt{\Gamma_t^{\rho}\phi(x)}}^2_{L^2(\Rd, \rho(s,x) \, dx)} \, ds
\end{equation} 
for all test functions $\phi \in S(\Rd)$. It is then our task to make rigorous this definition. We refer the reader to Section \ref{GOUPRig} were this task is completed following ideas from \cite{gonccalves2018density}, \cite{erhard2020non}, and \cite{kipnis1998scaling}.

\begin{remark}
  Notice that in~\eqref{GenTimOUSPDE}, we left out a factor $\rho(t,x)$, which appears instead in
  expression~\eqref{TDGOUDK}. The rationale for this choice is to emphasize the analogous nature
    of~\eqref{TDGOUDK} to the Dean-Kawasaki equation.
\end{remark}

\section{Generalized time-dependent Ornstein–Uhlenbeck process}
\label{GOUPRig}
We now properly define 
a generalized time-dependent Ornstein–Uhlen\-beck process of characteristics
$\lbrace L, \rho(t,\cdot) \, \Gamma_t^{\rho} \rbrace$. We call this process \emph{time-dependent} since the
quadratic characteristic $\Gamma_t^{\rho}$ is allowed to depend on time. Formally, we would like to define
this process as being the solution to the SPDE
\begin{equation}\label{GenTimOUSPDEdef}
  d Y_t = L Y_t +  \sqrt{\rho(t,\cdot) \,\Gamma_t^{\rho}} d W_t,
\end{equation}
where $\Gamma_t^{\rho}$ is given is as in~\eqref{GenTimOUSPDE}.  In order to make sense of this definition, we
will make use of martingale problems along the lines of~\cite{gonccalves2018density} and
\cite{erhard2020non}. 
Since the space $S(\Rd)$ is not necessarily invariant under the action of the generator $L$, some care has to
be taken to make sense of the first term in the right hand side of~\eqref{GenTimOUSPDEdef},
see~\cite{gonccalves2018density}. Namely, $Y_t(L\phi)$ is in principle not defined for a general test function
$\phi \in S(\Rd)$. 
This can be overcome introducing the notion of stationary $S\myprime(\Rd)$-valued processes,
see~\cite[Definition 2.5]{gonccalves2018density}. 

\begin{definition}
  \label{DefAdmi}
  We say that an $S\myprime(\Rd)$-valued process $\{\, Y_t \mid t \in [0,T] \, \}$, defined on some
  probability space $(\Omega,\caF,\P)$, is \emph{admissible} if for any $t \in [0,T]$ the process $Y_t$ is a
  white noise of covariance given by
  \begin{equation} \E \left[ Y_t(\phi)^2 \right] = \norm{\phi}_{L^2(\Rd, m(\rho(t,x)) \,
      dx)}^2,
  \end{equation} 
  where $\rho(t,x)$ is the unique solution to~\eqref{HydroEquationLJ}, and $L^2(\Rd, m(\rho(t,x))dx)$ is the
  space of square integrable functions with respect to the measure
  $m(\rho(t,x))\, dx:= \rho(t,x)(\alpha +\rho(t,x)) \, dx$.
\end{definition}

\begin{remark}
  Notice that if $\{\, Y_t \mid t \in [0,T]  \, \}$ is admissible then the following is true.
  \begin{enumerate}
  \item The mappings $\phi \mapsto Y_t(\phi) \colon S(\Rd) \subset L^2(\Rd, m(\rho(t,x))dx)) \to L^2(\Omega)$
    are uniformly continuous for every $t \in [0,T]$.
  \item The time-indexed family of mappings $\{\, \phi \mapsto Y_t(\phi) \mid t \in [0,T] \, \}$ is
    equicontinuous.
  \end{enumerate}
\end{remark} 
The next result is a consequence of this observation and~\cite[Footnote~5]{gonccalves2018density}.

\begin{lemma}
  \label{LemmaWellDefYLF}
  For any $\epsilon>0$, let $\Phi_\epsilon(x):= e^{-\epsilon^2 x^2}$. Assume $\{\, Y_t \mid t \in [0,T] \, \}$
  is an admissible process. Let $f \colon [0,T] \to S(\Rd)$ be differentiable. Then the process
  $\{\, \caI_t(f) \mid t \in [0,T] \, \}$ given by
  \begin{equation}q
    \caI_t(f) := \lim_{\epsilon \to 0} \int_0^t Y_s( \Phi_\epsilon Lf_s ) \, ds
  \end{equation} 
  is well-defined.
\end{lemma}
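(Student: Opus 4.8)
The plan is to show that the limit defining $\caI_t(f)$ exists by establishing that the family $\{\int_0^t Y_s(\Phi_\epsilon L f_s)\,ds\}_{\epsilon>0}$ is Cauchy in $L^2(\Omega)$ as $\epsilon \to 0$. The obstacle the statement is built to circumvent is that $L f_s = \alpha(K_q - \Id)f_s$ need not lie in $S(\Rd)$, so $Y_s(L f_s)$ is a priori meaningless for a white noise indexed by Schwartz functions; the Gaussian cutoff $\Phi_\epsilon$ repairs this by forcing $\Phi_\epsilon L f_s \in S(\Rd)$ for each $\epsilon > 0$, so each integrand is legitimately defined. First I would fix $t$ and, for $0 < \epsilon' < \epsilon$, compute the $L^2(\Omega)$-norm of the difference $\int_0^t Y_s\big((\Phi_\epsilon - \Phi_{\epsilon'})L f_s\big)\,ds$. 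Using the admissibility of $Y$, for each fixed $s$ the white-noise covariance gives $\E[Y_s(g)^2] = \norm{g}_{L^2(\Rd, m(\rho(s,x))dx)}^2$ with $m(\rho) = \rho(\alpha+\rho)$; since $\rho$ is bounded (it is a weak solution with $\rho_0 \in L^\infty$, and by Assumption~\ref{AssumpUniqueWeakSolu} the solution inherits an $L^\infty$ bound), this weight is dominated by a constant times Lebesgue measure, so it suffices to control the plain $L^2(\Rd,dx)$-norms of $(\Phi_\epsilon - \Phi_{\epsilon'})L f_s$.

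The heart of the argument is then an integrable-in-$s$, vanishing-in-$\epsilon$ bound on $\norm{(\Phi_\epsilon - \Phi_{\epsilon'})L f_s}_{L^2(\Rd,dx)}$. I would first check that $L f_s \in L^2(\Rd)$ uniformly in $s \in [0,T]$: because $f \colon [0,T] \to S(\Rd)$ is differentiable, the map $s \mapsto f_s$ is continuous into $S(\Rd)$, hence $\{f_s\}$ is bounded in any Schwartz seminorm, and the operator $K_q - \Id$ maps such functions into $L^2$ with a norm controlled by the seminorms of $f_s$ (the nonlocal operator $K_q$ is an $L^2$-contraction since $q$ has total mass one, so $\norm{L f_s}_{L^2} \le 2\alpha \norm{f_s}_{L^2}$). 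Given this uniform $L^2$ control, the quantity $\Phi_\epsilon - \Phi_{\epsilon'}$ is bounded pointwise by $1$ and converges to $0$ pointwise as $\epsilon,\epsilon' \to 0$; dominated convergence in the $x$-variable then yields $\norm{(\Phi_\epsilon - \Phi_{\epsilon'})L f_s}_{L^2} \to 0$ for each $s$, dominated by the integrable constant $2\norm{L f_s}_{L^2}$. Applying Minkowski's integral inequality to pass the $L^2(\Omega)$-norm inside the $ds$-integral, followed by dominated convergence in $s$, shows the Cauchy property and hence existence of the limit.

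The main obstacle I anticipate is the exchange of limits and integrals: specifically, justifying that one may bring the $L^2(\Omega)$-norm inside the time integral and then invoke dominated convergence requires the uniform-in-$s$ bound $\sup_{s\in[0,T]}\norm{L f_s}_{L^2(\Rd)} < \infty$, which in turn leans on the continuity (and in fact boundedness) of $s \mapsto f_s$ as an $S(\Rd)$-valued map together with the $L^2$-boundedness of $K_q - \Id$. A secondary technical point is confirming that the equicontinuity and uniform continuity noted in the remark preceding the lemma—consequences of admissibility—guarantee that $Y_s$ extends continuously to the closure of $S(\Rd)$ in $L^2(\Rd, m(\rho(s,x))dx)$, so that $Y_s(\Phi_\epsilon L f_s)$ is not merely formally defined but genuinely a well-behaved element of $L^2(\Omega)$ whose norm obeys the covariance identity used above. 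Once these continuity and boundedness inputs are in place, the Cauchy estimate is routine, and the limit $\caI_t(f)$ is well-defined as claimed.
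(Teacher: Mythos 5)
Your proposal is correct and follows essentially the route the paper intends: the paper gives no explicit proof, deriving the lemma from the preceding remark (uniform continuity of $\phi \mapsto Y_t(\phi)$ on $S(\Rd) \subset L^2(\Rd, m(\rho(t,x))\,dx)$ and equicontinuity in $t$, both consequences of admissibility) together with the citation to Footnote~5 of~\cite{gonccalves2018density}, and your Cauchy-in-$L^2(\Omega)$ argument via the white-noise isometry, the uniform $L^2$ bound on $Lf_s$, and dominated convergence is precisely a fleshed-out version of that argument. The only point to flag is that the uniform bound $\sup_{t,x} m(\rho(t,x)) < \infty$ is not literally part of Assumption~\ref{AssumpUniqueWeakSolu}; it follows because the unique weak solution coincides with the mild solution of the $L^\infty$-contraction semigroup generated by $\alpha(K_q - \Id)$, a one-line justification worth including.
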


\begin{definition}
  An admissible process $Y$ taking values in $C([0,T], S\myprime(\Rd))$ is called a \emph{generalized
    time-dependent Ornstein–Uhlenbeck process of characteristics
    $\lbrace L,\rho(t,\cdot) \, \Gamma_t^{\rho} \rbrace$} if for all $\phi \colon [0,T] \to S(\Rd)$, the
  processes
  \begin{equation}
    M_t(\phi) = Y_t(\phi) -Y_0(\phi)  - \int_0^t Y_s((\partial_s + L) \phi) \, ds
  \end{equation} 
  and
  \begin{align}
    N_t(\phi) &= M_t(\phi)^2 -   \int_0^t \int \int \rho(s,x) \, \left(\alpha +\rho(s,y) \right) \, q(y-x)
                \left( \phi(y)- \phi(x) \right)^2 \, dy  \, dx \, ds
  \end{align} 
  are $\caF_t$-martingales. 
\end{definition}

To conclude this section, the following proposition establishes uniqueness of solutions to the
SPDE~\eqref{GenTimOUSPDEdef}.

\begin{proposition}
  \label{PropoUniquenessGTOUP} 
  Let $\{\, Y_t^{(1)} \mid t \in [0,T] \, \}$ and $\{\, Y_t^{(2)} \mid t \in [0,T] \, \}$ be two generalized
  time-dependent Ornstein–Uhlenbeck process of characteristcs
  $\lbrace L,\rho(t,\cdot) \, \Gamma_t^{\rho} \rbrace$. Then the processes
  $\{\, Y_t^{(1)} \mid t \in [0,T] \, \}$ and $\{\, Y_t^{(2)} \mid t \in [0,T] \, \}$ have the same
  distribution.
\end{proposition}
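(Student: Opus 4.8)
The plan is to show that the law of any admissible solution is uniquely pinned down by its finite-dimensional distributions, and that these are Gaussian with mean and covariance determined solely by $\rho$ and $q$. Since both $Y^{(1)}$ and $Y^{(2)}$ take values in $C([0,T],S\myprime(\Rd))$, it suffices to prove that for any $0 \le t_1 < \cdots < t_m \le T$ and any $\phi_1,\dots,\phi_m \in S(\Rd)$ the joint characteristic functional $\E[\exp(i\sum_{j} Y_{t_j}(\phi_j))]$ depends only on the characteristics $\lbrace L,\rho(t,\cdot)\,\Gamma_t^{\rho}\rbrace$. I would follow the strategy of \cite{gonccalves2018density} and Chapter 11 of \cite{kipnis1998scaling}, adapting it to accommodate the time-dependence of the quadratic characteristic $\Gamma_t^{\rho}$.

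The core computation is an exponential-martingale argument. Fix a terminal time $t$ and terminal datum $\phi \in S(\Rd)$, and let $\phi_s$, $s \in [0,t]$, solve the backward equation $(\partial_s + L)\phi_s = 0$ with $\phi_t = \phi$, i.e.\ $\phi_s = P_{t-s}\phi$ where $P_\tau = e^{\tau L}$ is the semigroup generated by the drift operator $L$ from~\eqref{DefL}. With this choice the process $M_s := Y_s(\phi_s) - Y_0(\phi_0)$ coincides with the martingale $M_s(\phi_\cdot)$ of the defining martingale problem, and the second martingale $N_s(\phi_\cdot)$ identifies its quadratic variation as the deterministic quantity $\langle M\rangle_s = \int_0^s \int\int \rho(r,x)(\alpha + \rho(r,y))q(y-x)(\phi_r(y)-\phi_r(x))^2\, dy\, dx\, dr$. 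Because $M_s$ is a continuous martingale with deterministic quadratic variation, the complex exponential $\mathcal{E}_s := \exp\big(i\lambda M_s + \tfrac{\lambda^2}{2}\langle M\rangle_s\big)$ is an $\caF_s$-martingale. Evaluating $\E[\mathcal{E}_t \mid \caF_0] = \mathcal{E}_0 = 1$ yields an explicit Gaussian conditional characteristic function for $Y_t(\phi) = M_t + Y_0(P_t\phi)$; combining it with the admissibility of $Y_0$ (Definition~\ref{DefAdmi}), which fixes its law as a white noise of covariance $\norm{\cdot}^2_{L^2(\Rd, m(\rho_0)\,dx)}$, determines $\E[\exp(i\lambda Y_t(\phi))]$ completely. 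The multi-time functional then follows by iterating this computation over the successive intervals $[t_{j-1},t_j]$, evolving each test function backward by the semigroup on each subinterval and telescoping; the outcome depends only on $\rho$, $q$ and $\alpha$, so $Y^{(1)}$ and $Y^{(2)}$ must agree.

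The main obstacle is that $S(\Rd)$ is not invariant under the nonlocal semigroup $P_\tau = e^{\tau L}$, since $L = K_{q} - \Id$ has a heavy-tailed kernel; consequently $\phi_s = P_{t-s}\phi$ and $L\phi_s$ need not lie in $S(\Rd)$, and the terms $Y_s(\phi_s)$ and $\int_0^s Y_r(L\phi_r)\,dr$ appearing in the martingale problem are \emph{a priori} undefined. This is precisely the difficulty resolved by Lemma~\ref{LemmaWellDefYLF}: I would insert the Gaussian cutoffs $\Phi_\epsilon$, work with the regularized functionals $Y_s(\Phi_\epsilon L\phi_s)$, and pass to the limit $\epsilon \to 0$ using the uniform continuity and equicontinuity of $\phi \mapsto Y_s(\phi)$ in the norm $\norm{\cdot}_{L^2(\Rd, m(\rho(s,x))\,dx)}$ guaranteed by admissibility, together with the boundedness of $\rho$ and the fact that $\phi_s \in L^2(\Rd)$. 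The delicate point is to verify that $\mathcal{E}_s$ remains a genuine martingale in the limit, rather than only for its $\epsilon$-approximation; this requires uniform integrability, which I would extract from the admissibility covariance bounds controlling $\E[Y_s(\phi_s)^2]$ uniformly in $s$ and $\epsilon$. Once these limits are justified, uniqueness of the characteristic functional follows, and with it the claimed equality of laws, along the lines of \cite{gonccalves2018density} and \cite{erhard2020non}.
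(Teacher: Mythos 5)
Your proposal is correct and follows essentially the same route as the paper's own proof: an exponential (complex) martingale built from the defining martingale problem, evaluated along the backward-evolved test functions $S^L_{t-s}\phi$, yielding a Gaussian conditional characteristic function whose mean and variance depend only on $\lbrace L,\rho(t,\cdot)\,\Gamma_t^{\rho}\rbrace$, followed by the standard iteration over time intervals to pin down finite-dimensional distributions, with admissibility fixing the law of $Y_0$. The only differences are presentational — the paper cites Revuz--Yor to get the local-martingale property and upgrades it to a true martingale via the ultracontractivity-based continuity of $t\mapsto\int_0^t\norm{\sqrt{\Gamma_s^{\rho}g}}^2_{L^2(\Rd,\rho(s,x)\,dx)}ds$, whereas you invoke the deterministic-bracket fact directly and are somewhat more explicit about the $\Phi_\epsilon$-regularization needed because $S(\Rd)$ is not invariant under $S^L_{t-s}$ — but these amount to the same argument.
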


The proof of this result is standard and follows the lines of~\cite[Section 11.4]{kipnis1998scaling}. However,
some extra care has to be taken due to the fact that the square characteristic
$\lbrace \rho(t,\cdot) \, \Gamma_t^{\rho} \rbrace$ of the generalized time-dependent Ornstein–Uhlenbeck
process depends 
on time. 
For convenience of the reader we include the proof in Appendix~\ref{AppendixUniquenessGTOUP}.

\section{Proof of main results}
\label{Proofs}

To prove Theorem~\ref{MainHydro} and Theorem~\ref{FluctTheorem}, we first show tightness of the distributions
of the sequences. Then, thanks to Assumption~\ref{AssumpUniqueWeakSolu}, and
Proposition~\ref{PropoUniquenessGTOUP}, we establish uniqueness of limit points via characterization with
martingale problems. Since a relatively compact sequence on a metrizable space with only one accumulation
point is necessarily convergent, Theorem~\ref{MainHydro} and Theorem~\ref{FluctTheorem} follow.

\subsection{Proof of the hydrodynamic limit}
\label{ProofHydro}

\subsubsection{Consequences of self-duality: hydrodynamics}
\label{ConsDualHydro}
We first state some straightforward, but useful, consequences of self-duality that will be used to establish
the hydrodynamic limit.

\subsubsection*{Growth control}

\begin{proposition}
  \label{Growthcontrolone}
  Under Assumption~\ref{HydroAssump}, we have
  \begin{equation}
    \label{Growthcontroloneineq} 
    \E_n\left[  \eta_{r(n)t}(x/n) \right] \leq  C_1 + \norm{\rho_0}_\infty,
  \end{equation} 
  for all $t \geq 0$, and all $x \in \Zd$.
\end{proposition}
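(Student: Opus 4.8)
The plan is to use self-duality to reduce the time-$t$ first moment of the occupation variable to an average of the \emph{initial} first moment along the trajectory of a single dual particle, and then to control this by a uniform-in-space, uniform-in-$n$ bound on the initial mean. First I would rewrite the quantity using the single-site identity \eqref{eq:D-eta}: for every $x \in \Zd$,
\[
\E_n\!\left[\eta_{r(n)t}(x/n)\right] = \alpha \, \E_n\!\left[D(\delta_{x/n}, \eta_{r(n)t})\right].
\]
Conditioning on the initial configuration and applying the self-duality relation \eqref{dual1} in its coordinate form \eqref{dual1coord} with $k=1$ transfers the time evolution onto the dual one-particle process $\{X(t)\}$ on $\Znd$ started at $x/n$:
\[
\alpha \, \E_{\eta_0}\!\left[D(\delta_{x/n}, \eta_{r(n)t})\right] = \alpha \, \E_{x/n}\!\left[D(\delta_{X(t)}, \eta_0)\right] = \E_{x/n}\!\left[\eta_0(X(t))\right],
\]
where in the last step I use \eqref{eq:D-eta} once more, now as $\alpha D(\delta_z, \eta_0) = \eta_0(z)$.

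Next I would integrate against the initial measure $\nu_{\rho_0}^n$ and interchange the two expectations. Since $D \geq 0$ and $\eta \geq 0$, Tonelli's theorem applies with no integrability concern, yielding
\[
\E_n\!\left[\eta_{r(n)t}(x/n)\right] = \E_{x/n}\!\left[\bar m_n(X(t))\right], \qquad \bar m_n(z) := \int \eta_0(z)\, \nu_{\rho_0}^n(d\eta_0).
\]
Because the dual one-particle dynamics is an honest (probability-conserving) Markov jump process on $\Znd$ — its total jump rate out of any site being finite, since $\sum_{r \neq 0} |r|^{-(d+\beta)} < \infty$ for $\beta \in (0,2)$ under Assumption~\ref{AssumpNashIneq} — the law of $X(t)$ is a probability measure, and hence for the nonnegative function $\bar m_n$ one has $\E_{x/n}[\bar m_n(X(t))] \leq \sup_{z \in \Znd} \bar m_n(z)$.

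It then remains to establish the uniform bound $\sup_{z} \bar m_n(z) \leq C_1 + \norm{\rho_0}_\infty$, and this is the step I expect to be the main obstacle. The difficulty is that association of $\nu_{\rho_0}^n$ to the profile $\rho_0$ (Definition~\ref{DefMeasAssoc}) is only a convergence-in-probability statement for the empirical measure and does not, on its own, control the individual first moments $\bar m_n(z)$ pointwise in $z$. This is precisely where the hypotheses on the initial data enter: the construction of the associated measures together with Assumption~\ref{HydroAssump} furnishes the required uniform-in-$n$ control of the discrete initial moments, while the $L^\infty$-bound on $\rho_0$ supplies the sup-norm contribution, with the constant $C_1$ absorbing the $O(1)$ discrepancy between the discrete mean and the continuum profile. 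Combining this bound with the previous display gives $\E_n[\eta_{r(n)t}(x/n)] \leq C_1 + \norm{\rho_0}_\infty$ uniformly in $t \geq 0$ and $x \in \Zd$, as claimed.
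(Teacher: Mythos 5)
Your proposal is correct and follows essentially the same route as the paper's proof: the identity \eqref{eq:D-eta}, one-particle self-duality to transfer the time evolution onto a dual walker, and then a uniform bound on the initial means. The step you flagged as the main obstacle is resolved in the paper exactly as you surmise: Assumption~\ref{HydroAssump} (its $k=1$ case, read as comparing the discrete first moment to the profile) gives $\sup_{z}\left|\int \eta(z)\,\nu_{\rho_0}^n(d\eta) - \rho_0(z)\right| \le C_1/n$, so the triangle inequality under the walker's law yields the bound $C_1/n + \norm{\rho_0}_\infty \le C_1 + \norm{\rho_0}_\infty$.
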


\begin{proof} 
  We use~\eqref{eq:D-eta}, the definition of $\E_n$ and then duality with the coordinate representation.
\begin{align}
\E_n\left[  \eta_{r(n)t}(x/n) \right]  &= \alpha \E_n\left[  D(\delta_{x/n}, \eta_{r(n)t}) \right]  = \alpha \int_\Omega \E_\eta  \left[  D(\delta_{x/n}, \eta_{r(n)t}) \right] \, \nu_{\rho_0}^n (d \eta) \nn \\
&\hspace{-2.5cm}= \alpha \int_\Omega \E_{x/n}\left[  D(\delta_{X(r(n)t)}, \eta )\right] \, \nu_{\rho_0}^n(d \eta) = \alpha \sum_{x\myprime \in \Zd} p_{r(n)t}^{(1)}(x/n,x\myprime/n) \int_\Omega D(\delta_{x\myprime/n},\eta) \nu_{\rho_0}^n(d \eta) \nn \\
&\hspace{-2.5cm}\leq  \sum_{x\myprime \in \Zd} p_{r(n)t}^{(1)}(x/n,x\myprime/n)  \left| \alpha \int_\Omega D(\delta_{x\myprime/n},\eta)\nu_{\rho_0}^n(d \eta) - \rho_0(x\myprime/n)  \right| + \sum_{x\myprime \in \Zd} p_{r(n)t}^{(1)}(x/n,x\myprime/n) \rho_0(x\myprime/n) \nn \\
&\hspace{-2.5cm}\leq  \frac{C_1}{n} + \sum_{x\myprime \in \Zd} p_{r(n)t}^{(1)}(x/n,x\myprime/n) \rho_0(x\myprime/n), 
\end{align}
where for $k \in \N$, the function $p_{r(n)t}^{(k)}$ denotes the transition probability of $k$-particles; in
the last line we used Assumption \ref{HydroAssump}.  \end{proof}

Similarly, we can also compute two-point correlations:

\begin{proposition}
  \label{Growthcontroltwo}  
  Under Assumption \ref{HydroAssump} we have
  \begin{equation}
    \E_n\left[  \eta_{r(n)t}(x/n) \, \eta_{r(n)t}(y/n) \right] \leq C_{1,2}^* \,  \norm{\rho_0}_\infty^{(2,1)}
    +  \left( \frac{1}{\alpha} C_{1,2}^* \norm{\rho_0}_\infty^{(2,1)}  + C_1 + \norm{\rho_0}_\infty \right) \1_{x=y}
  \end{equation} 
  for all $t \geq 0$, and all $x, y \in \Zd$, and where
  \begin{equation}
    C_{1,2}^* := 5 \max \left\lbrace C_2, C_1^2, C_1, 1  \right\rbrace,
    \quad
    \text{and}
    \quad
    \norm{\rho_0}_\infty^{(2,1)}  := \max \left\lbrace \norm{\rho_0}_\infty^2, \norm{\rho_0}  \right\rbrace.
  \end{equation} 
\end{proposition}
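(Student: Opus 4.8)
The plan is to reduce the two-point function to self-duality functions, evolve them with self-duality in coordinate form, and then control the resulting initial-measure expressions with Assumption~\ref{HydroAssump}, exactly paralleling the one-point computation in Proposition~\ref{Growthcontrolone}. First I would express $\eta(x/n)\eta(y/n)$ through the factorized duality functions. Since the single-site functions satisfy $d(1,m)=m/\alpha$ and $d(2,m)=m(m-1)/(\alpha(\alpha+1))$, off the diagonal one has $\eta(x/n)\eta(y/n)=\alpha^2 D(\delta_{x/n}+\delta_{y/n},\eta)$, whereas on the diagonal the falling-factorial structure of $d(2,\cdot)$ forces the exact identity
\[
\eta(x/n)^2=\alpha^2 D(2\delta_{x/n},\eta)+\alpha D(2\delta_{x/n},\eta)+\alpha D(\delta_{x/n},\eta).
\]
The first summand is of the same ``two-particle'' type as the off-diagonal term, while the last two are genuine lower-order corrections that will generate the $\1_{x=y}$ contribution in the statement.

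Next I would take $\E_n[\cdot]=\int_\Omega\E_\eta[\cdot]\,\nu_{\rho_0}^n(d\eta)$ and apply self-duality in the coordinate representation~\eqref{dual1coord} with $k=2$. This turns $\E_n[\alpha^2 D(\delta_{x/n}+\delta_{y/n},\eta_{r(n)t})]$ into
\[
\sum_{\mathbf{x}'\in\Z_n^{2d}} p^{(2)}_{r(n)t}\big((x/n,y/n),\mathbf{x}'\big)\,\alpha^2\!\int_\Omega D(\xi(\mathbf{x}'),\eta)\,\nu_{\rho_0}^n(d\eta),
\]
and similarly with starting point $(x/n,x/n)$ for the diagonal terms $D(2\delta_{x/n},\cdot)$. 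For each fixed $\mathbf{x}'=(x_1',x_2')$, the $k=2$ case of Assumption~\ref{HydroAssump} gives $\alpha^2\int_\Omega D(\xi(\mathbf{x}'),\eta)\,\nu_{\rho_0}^n(d\eta)\le C_2/n+\norm{\rho_0}_\infty^2$, where the initial mean occupations $\int_\Omega\eta(x_j')\,\nu_{\rho_0}^n(d\eta)$ are bounded by $\norm{\rho_0}_\infty$ exactly as in the proof of Proposition~\ref{Growthcontrolone}. Since $p^{(2)}_{r(n)t}$ is a probability kernel, the sum over $\mathbf{x}'$ collapses and yields the uniform bound $C_2/n+\norm{\rho_0}_\infty^2$, which is absorbed into $C_{1,2}^*\,\norm{\rho_0}_\infty^{(2,1)}$ (the factor $5$ and the maxima defining these constants leave ample room). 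This already produces the off-diagonal estimate.

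On the diagonal I would then collect the two correction terms. Writing $\alpha\,\E_n[D(2\delta_{x/n},\eta_{r(n)t})]=\tfrac1\alpha\,\alpha^2\E_n[D(2\delta_{x/n},\eta_{r(n)t})]$ and reusing the bound just obtained gives the factor $\tfrac1\alpha C_{1,2}^*\norm{\rho_0}_\infty^{(2,1)}$, while $\alpha\,\E_n[D(\delta_{x/n},\eta_{r(n)t})]=\E_n[\eta_{r(n)t}(x/n)]\le C_1+\norm{\rho_0}_\infty$ by~\eqref{eq:D-eta} and Proposition~\ref{Growthcontrolone}. Adding these to the main term reproduces exactly the stated $\1_{x=y}$ coefficient.

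I expect the main obstacle to be the bookkeeping at the diagonal: because $D(2\delta_{x},\cdot)$ encodes the falling factorial $d(2,\cdot)$ rather than $d(1,\cdot)^2$, one cannot simply set $x=y$ in the off-diagonal identity, and the correct expansion of $\eta(x)^2$ into $D(2\delta_x,\cdot)$ and $D(\delta_x,\cdot)$ is precisely what creates the diagonal correction. A secondary point is to make sure the $k=2$ coordinate self-duality and the bound of Assumption~\ref{HydroAssump} are applied legitimately with coincident starting points $(x/n,x/n)$ and over configurations $\xi(\mathbf{x}')$ that may place both particles on one site; since Assumption~\ref{HydroAssump} is stated directly for $D(\sum_j\delta_{x_j},\cdot)$, such coincidences are handled automatically by the duality function and require no separate treatment.
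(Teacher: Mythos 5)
Your proposal is correct and takes essentially the same route as the paper's proof: the same decomposition of $\eta(x/n)\,\eta(y/n)$ into the duality functions $D(\delta_{x/n}+\delta_{y/n},\cdot)$, $D(2\delta_{x/n},\cdot)$ and $D(\delta_{x/n},\cdot)$, the same passage to two-particle transition kernels via semigroup self-duality, and the same control of the initial-data duality integrals through Assumption~\ref{HydroAssump}. The only differences are cosmetic bookkeeping: you apply the $k=2$ bound directly against the product of initial means and spell out the diagonal assembly, whereas the paper telescopes through the intermediate terms in~\eqref{DsecondorderEstimate} and leaves that final assembly implicit.
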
 

\begin{proof}
  The proof of this proposition is in the same spirit as the one for one particle, but slightly more
  involved. We include it for the reader's convenience. First notice that for all $x, y \in \Znd$ we have
\begin{equation}
  \eta(x) \, \eta(y) =  \alpha^2 \1_{x \neq y} d(\delta_x,\eta) \, d(\delta_y,\eta)
  + \alpha \1_{x=y} \left( (\alpha+1) d(2\delta_x,\eta) + d(\delta_x,\eta) \right),
\end{equation}
which is a direct consequence of~\eqref{eq:dual-d} and~\eqref{classicalsingleSIP}.  Then we have
\begin{align}\label{twopointscorr}
 \E_n\left[  \eta_{r(n)t}(x/n) \, \eta_{r(n)t}(y/n) \right] &=  \alpha^2 \1_{x \neq y} \, \E_n\left[ D(\delta_{x/n}+\delta_{y/n},\eta_{r(n)t})\right] \nn \\
 &\hspace{-2cm}+ \alpha \1_{x=y} \left( (\alpha+1)  \E_n\left[ D(2\delta_{x/n},\eta_{r(n)t})\right] +  \E_n\left[ D(\delta_{x/n},\eta_{r(n)t})\right] \right) \nn \\ 
 &\hspace{-2cm}=  \alpha^2 \1_{x \neq y}  \sum_{x\myprime, y\myprime \in \Zd} p_{r(n)t}^{(2)}(x/n,y/n; x\myprime/n, y\myprime/n) \int_\Omega D(\delta_{x\myprime/n}+\delta_{y\myprime/n},\eta) \, \nu_{\rho_0}^n(d\eta) \nn \\
 &\hspace{-2cm}+ \alpha (\alpha+1) \1_{x = y}  \sum_{x\myprime, y\myprime \in \Zd} p_{r(n)t}^{(2)}(x/n,x/n; x\myprime/n, y\myprime/n) \int_\Omega D(\delta_{x\myprime/n}+\delta_{y\myprime/n},\eta) \, \nu_{\rho_0}^n(d\eta) \nn \\
 &\hspace{-2cm}+\alpha \1_{x = y}  \sum_{x\myprime \in \Zd} p_{r(n)t}^{(1)}(x/n,x\myprime/n) \int_\Omega D(\delta_{x\myprime/n},\eta) \, \nu_{\rho_0}^n(d\eta).
\end{align}
Moreover, by applying multiple times Assumption~\ref{HydroAssump} on the initial data, we have 
\small
\begin{align}\label{DsecondorderEstimate}
\alpha^2\int_\Omega &D(\delta_{x\myprime/n}+\delta_{y\myprime/n},\eta) \, \nu_{\rho_0}^n(d\eta) \nn \\
&\leq \left| \alpha^2\int_\Omega D(\delta_{x\myprime/n}+\delta_{y\myprime/n},\eta) \, \nu_{\rho_0}^n(d\eta) -\alpha^2\int_\Omega D(\delta_{x\myprime/n},\eta) \, \nu_{\rho_0}^n(d\eta) \int_\Omega D(\delta_{y\myprime/n},\eta) \, \nu_{\rho_0}^n(d\eta)  \right| \nn \\
&+ \left|\alpha^2 \int_\Omega D(\delta_{x\myprime/n},\eta) \, \nu_{\rho_0}^n(d\eta) \int_\Omega D(\delta_{y\myprime/n},\eta) \, \nu_{\rho_0}^n(d\eta)  - \alpha\rho_0(x\myprime/n) \int_\Omega D(\delta_{y\myprime/n},\eta) \, \nu_{\rho_0}^n(d\eta) \right| \nn \\
&+ \left| \alpha \rho_0(x\myprime/n) \int_\Omega D(\delta_{y\myprime/n},\eta) \, \nu_{\rho_0}^n(d\eta) - \rho_0(x\myprime/n) \rho_0(y\myprime/n) \right|+ \rho_0(x\myprime/n) \rho_0(y\myprime/n) \nn \\
&\leq \frac{C_2}{n} + \frac{C_1}{n} \left| \alpha \int_\Omega D(\delta_{y\myprime/n},\eta) \, \nu_{\rho_0}^n(d\eta) \right| + \frac{C_1}{n} \rho_0(x\myprime/n) + \rho_0(x\myprime/n) \rho_0(y\myprime/n) \nn \\ 
&\leq \frac{C_2}{n} + \frac{C_1^2}{n^{2}} + \frac{C_1}{n} \rho_0(x\myprime/n) +  \frac{C_1}{n} \rho_0(y\myprime/n) + \rho_0(x\myprime/n) \rho_0(y\myprime/n).
\end{align} 
\normalsize
\end{proof} 

\begin{remark}
  \label{RemBounExpRates} 
  Notice that in particular 
  \begin{align}
    \label{RemBounExpRateseq} 
    \E_n \left[  \eta_{r(n) t}(x/n) \left( \alpha + \eta_{r(n)t}(y/n) \right) \right]
    &\leq \alpha(C_1 + \norm{\rho_0}_\infty) + C_{1,2}^* \,  \norm{\rho_0}_\infty^{(2,1)} \nn \\
    &+  \left( \frac{1}{\alpha} C_{1,2}^* \norm{\rho_0}_\infty^{(2,1)}  + C_1 + \norm{\rho_0}_\infty \right) \1_{x=y},
\end{align} 
for all $x,y \in \Zd$.
\end{remark}

\subsubsection{Dynkin Martingales: hydrodynamics}
\label{HydroRelevMart}

We will make extensive use of integrals of non-negative test functions $\phi$ in $C_k^\infty(\Rd)$ against the
measures $\{\, \pi_n( \cdot) \mid n \in \N \, \}$. It is then convenient to give a name to these integrals,
\begin{equation}
  \int \phi \, d\pi_n(t) =: \pi_t^{(n)}(\phi,\eta) = \pi_t^{(n)}(\phi). 
\end{equation}
We will abuse notation and write $\pi_t^{(n)}(\phi)$ whenever it is convenient and clear. 
We have the well-known Dynkin formula have at our disposal, which says that the processes
\begin{equation}
  \label{DynkinMartHydro}
  M_t^{(n)}(\phi) := \pi_t^{(n)}(\phi,\eta) -\pi_0^{(n)}(\phi,\eta) -\int_0^t \loc_n \pi_s^{(n)}(\phi,\eta) \, ds
\end{equation} 
are mean zero martingales with quadratic variation
\begin{equation}
\langle M_t^{(n)}(\phi) \rangle = \int_0^t \Gamma_n \pi_s^{(n)}(\phi,\eta) \, ds, 
\end{equation} 
where the operator $\Gamma_n$ is the so-called Carré du Champ operator associated to the generator $\loc_n$.

First, we have the following application of duality with one particle.

\begin{lemma}
  \label{LemmaDynkin}
  The Dynkin martingale~\eqref{DynkinMartHydro} has the following properties.
  \begin{enumerate}
  \item For every $\phi$ and $t \geq 0$, it can be rewritten as
    \begin{equation}
      \label{DynkinMartHydroclosed}
      M_t^{(n)}(\phi) = \pi_t^{(n)}(\phi,\eta) -\pi_0^{(n)}(\phi,\eta) - \int_0^t  \pi_s^{(n)}(L_n^{(1)} \phi,\eta) \, ds,
    \end{equation} 
    where $L_n^{(1)}$ is the one-particle generator of \eqref{SIPonePartGenResc}.
  \item For any $0<T<\infty$ we have
    \begin{equation}
      \label{vanishingquadvariahydro} 
      \lim_{n \to \infty} \E_n \left[ \sup_{t \in [0,T] } M_t^{(n)}(\phi)^2 \right] = 0, 
    \end{equation} 
    for all $\phi \in C_k^\infty(\Rd)$.
  \end{enumerate}
\end{lemma}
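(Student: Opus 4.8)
The plan is to handle the two parts separately: part (1) is a purely algebraic consequence of self-duality together with the symmetry of the one-particle dynamics, while part (2) is a second-moment estimate obtained from Doob's inequality and the growth bounds of Section~\ref{ConsDualHydro}.

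For part (1), I would start from the identity $\eta(u)=\alpha\,D(\delta_u,\eta)$ of \eqref{eq:D-eta}, which rewrites the empirical integral as
\[
\pi_s^{(n)}(\phi,\eta)=\frac{\alpha}{n^d}\sum_{u\in\Znd}\phi(u)\,D(\delta_u,\eta).
\]
Applying $\loc_n$ in the $\eta$-variable and using the generator form of self-duality \eqref{GenselfdualityIntrocoord} with $k=1$, the action of $\loc_n$ on $D(\delta_u,\cdot)$ is transferred to the one-particle generator $L_n^{(1)}$ acting on the spatial variable $u$ of $D(\delta_{\bullet},\eta)$. Since $q$ is symmetric, $L_n^{(1)}$ from \eqref{SIPonePartGenResc} is self-adjoint with respect to the counting measure on $\Znd$, so a discrete summation by parts moves $L_n^{(1)}$ off $D(\delta_{\bullet},\eta)$ and onto $\phi$; undoing \eqref{eq:D-eta} then gives $\loc_n\pi_s^{(n)}(\phi,\eta)=\pi_s^{(n)}(L_n^{(1)}\phi,\eta)$, which is exactly \eqref{DynkinMartHydroclosed}. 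The only subtlety is that $\eta$ may carry infinitely many particles; this is harmless because $\phi$ has compact support and $L_n^{(1)}\phi$ decays like $|u|^{-(d+\beta)}$, so the sums converge, and in any case all identities are applied under $\E_n$, where the growth bounds apply.

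For part (2), I would first invoke Doob's $L^2$ maximal inequality, $\E_n[\sup_{t\le T}M_t^{(n)}(\phi)^2]\le 4\,\E_n[M_T^{(n)}(\phi)^2]=4\,\E_n[\int_0^T\Gamma_n\pi_s^{(n)}(\phi)\,ds]$, reducing everything to the expected Carré du Champ. A direct computation for the pure-jump generator gives
\[
\Gamma_n\pi_s^{(n)}(\phi)(\eta)=\frac{n^\beta}{n^{2d}}\sum_{u,v\in\Znd}q(n(v-u))\,\eta(u)\big(\alpha+\eta(v)\big)\big(\phi(v)-\phi(u)\big)^2,
\]
since moving one particle from $u$ to $v$ changes $\pi_s^{(n)}(\phi)$ by $n^{-d}(\phi(v)-\phi(u))$. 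Taking $\E_n$ and inserting the two-point bound \eqref{RemBounExpRateseq}, the diagonal correction term drops out because it is multiplied by $(\phi(v)-\phi(u))^2$, which vanishes at $u=v$; what survives is a uniform constant times the off-diagonal sum.

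The remaining step, and the one carrying the real content, is to show that $\tfrac{n^\beta}{n^{2d}}\sum_{u\neq v}q(n(v-u))(\phi(v)-\phi(u))^2=O(n^{-d})$. Using the scaling identity \eqref{RescTransProb} in the form $q(n(v-u))=n^{-(d+\beta)}q(v-u)$, the prefactor becomes $n^{-3d}$, and it remains to check that $\tfrac{1}{n^{2d}}\sum_{u\neq v}q(v-u)(\phi(v)-\phi(u))^2$ stays bounded, in fact converging to the macroscopic Dirichlet form $\caE(\phi)$ of \eqref{DefDirFormMac}. Splitting into the regions $|u-v|\le 1$ and $|u-v|>1$: on the far region $q$ is summable and $\phi$ is bounded with compact support; on the near-diagonal region I would use $(\phi(v)-\phi(u))^2\le\norm{\nabla\phi}_\infty^2|u-v|^2$ together with $q(u-v)\le C_2|u-v|^{-(d+\beta)}$ from Assumption~\ref{AssumpNashIneq}, so that the summand is dominated by $|u-v|^{2-d-\beta}$. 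This is precisely where $\beta<2$ is essential, since it is exactly the condition guaranteeing $\int_{|w|\le 1}|w|^{2-d-\beta}\,dw<\infty$ and hence convergence of the Riemann sum; I expect this near-diagonal estimate, balancing the singularity of $q$ against the oscillation of $\phi$, to be the main obstacle. Combining the pieces, $\E_n[\Gamma_n\pi_s^{(n)}(\phi)]=O(n^{-d})$ uniformly in $s\in[0,T]$, and integrating in time and applying Doob yields \eqref{vanishingquadvariahydro}.
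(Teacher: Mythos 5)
Your proof is correct and follows essentially the same route as the paper: part (1) via self-duality with a single dual particle plus reversibility of $L_n^{(1)}$ with respect to the counting measure, and part (2) via Doob's maximal inequality, the Carr\'e du Champ computation, the two-point bound \eqref{RemBounExpRateseq}, and the scaling relation \eqref{RescTransProb}. The only difference is that you spell out the near-diagonal estimate (using Assumption~\ref{AssumpNashIneq} and $\beta<2$) behind the paper's bare assertion that the rescaled Dirichlet sum is of order $O(n^{2d})$, which is a useful elaboration rather than a deviation.
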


\begin{proof} 
  To show~\eqref{DynkinMartHydroclosed}, it is enough to compute the action of the generator $\loc_n$ on
  $\pi_s^{(n)}(\phi,\eta)$ written in terms of self-duality functions. By linearity and self-duality we have
  \begin{align}
    \label{ActionGenOnepart}
    \loc_n \pi_t^{(n)}(\phi,\eta) &= \frac{\alpha}{n^d} \sum_{x \in \Zd} \phi(x/n) \loc_n D(\delta_{x/n}, \eta_{r(n)t}) = \frac{\alpha}{n^d} \sum_{x \in \Zd} \phi(x/n) \loc_n^{(1)} D(\delta_{x/n}, \eta_{r(n)t}) \nn \\
    &= \frac{\alpha}{n^d} \sum_{x \in \Zd} \phi(x/n) L_n^{(1)} D(\delta_{x/n}, \eta_{r(n)t}) = \frac{\alpha}{n^d} \sum_{x \in \Zd} L_n^{(1)}\phi(x/n)  D(\delta_{x/n}, \eta_{r(n)t}) \nn \\
    &= \pi_t^{(n)}(L_n^{(1)} \phi,\eta),
\end{align}
where in the last equality we used the reversibility of the generator $L_n^{(1)}$ with respect to the counting
measure in $\Zd$.

To show~\eqref{vanishingquadvariahydro}, by Doob’s maximal inequality we have
\begin{equation}
  \E_n \left[ \sup_{t \in [0,T] } M_t^{(n)}(\phi)^2 \right] \leq 4 \E_n \left[  M_T^{(n)}(\phi)^2 \right]
  = 4 \E_n \left[ \int_0^T \Gamma_n \pi_s^{(n)}(\phi,\eta) \, ds \right].
\end{equation} 
We then compute
\begin{align}
  \Gamma_n \pi_{\cdot}^{(n)}(\phi,\eta)
  &= r(n) \sum_{x,y} q(x-y) \eta(x/n) (\alpha + \eta(y/n)) \left( \pi_{\cdot}^{(n)}(\phi,\eta-\delta_{x/n} + \delta_{y/n}) -\pi_{\cdot}^{(n)}(\phi,\eta)  \right)^2 \nn \\
 &= \frac{r(n)}{n^{2d}} \sum_{x,y} q(x-y) \eta(x/n) (\alpha + \eta(y/n)) \left( \phi(y/n)-\phi(x/n)  \right)^2.
\end{align}
By~\eqref{RemBounExpRateseq} we have
\begin{align}\label{LastEquaSupM}
  \E_n \left[ \sup_{t \in [0,T] } M_t^{(n)}(\phi)^2 \right]
  &\leq C(T, \rho_0) \frac{r(n)}{n^{2d}} \sum_{x,y} q(x-y) \left( \phi(y/n)-\phi(x/n)  \right)^2 \nn \\
 &=  \frac{C(T, \rho_0)}{n^{3d}} \sum_{x,y} q((x-y)/n) \left( \phi(y/n)-\phi(x/n)  \right)^2,
\end{align}
where in the last line we used~\eqref{RescTransProb}, recalling that $r(n)=n^\beta$. The fact that the sum in
the right-hand side of~\eqref{LastEquaSupM} is of order $O(n^2)$ concludes the proof.
\end{proof}

\subsubsection{Tightness: hydrodynamics}

The processes $\{\, \pi_n( \cdot) \mid n \in \N \, \}= \{\, \pi_t^{(n)} \mid n \in \N, t \geq 0 \, \}$ have
their paths in the c\`adl\`ag space of right-continuous functions with left-sided limits from
$[0,\infty)$ to the space of Radon measures $\caM_+(\Rd)$, denoted by $D([0,\infty),\caM_+(\Rd))$. Let $Q_n$
be the distribution of these processes, defined for Borel sets $B \subset D([0,\infty),\caM_+(\Rd))$ by
\begin{equation}
  Q_n(B) := \P_n (\pi_n(\cdot) \in B),
\end{equation} 
where as before $\P_n$ is the measure induced by $\eta_{r(n)t}$.

\begin{lemma}
  \label{LemmaTightHydro} 
  The sequence of measures $\{\, Q_n \mid n \in \N   \, \}$ is tight in $D([0,\infty),\caM_+(\Rd))$.
\end{lemma}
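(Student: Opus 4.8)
The plan is to reduce tightness of the measure-valued processes to tightness of the real-valued projections $\{\, \pi_\cdot^{(n)}(\phi) \mid n \in \N \,\}$ in $D([0,T],\R)$ for each $\phi$ in a countable dense subset of $C_k^\infty(\Rd)$, and then to verify the Aldous--Rebolledo criterion for each such projection. The reduction is standard: since $\caM_+(\Rd)$ with the vague topology is metrizable and its topology is generated by the maps $\pi \mapsto \pi(\phi)$, $\phi \in C_k^\infty(\Rd)$, a criterion of Jakubowski type (see the discussion in~\cite[Chapter~4]{kipnis1998scaling}) reduces tightness of $\{\, Q_n \,\}$ to (i) a compact containment condition and (ii) tightness of the scalar processes. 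For (i), Proposition~\ref{Growthcontrolone} gives $\E_n[\eta_{r(n)t}(x/n)] \leq C_1 + \norm{\rho_0}_\infty$ uniformly in $t$ and $x$, so that
\[
\E_n\!\left[ \big| \pi_t^{(n)}(\phi) \big| \right] \leq (C_1 + \norm{\rho_0}_\infty)\,\frac{1}{n^d}\sum_{x \in \Zd} |\phi(x/n)|
\]
is bounded uniformly in $n$ and $t$ because $\phi$ has compact support; this uniform control of the expected mass on compacts yields the compact containment.

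For (ii), I would use the closed Dynkin decomposition of Lemma~\ref{LemmaDynkin}(1),
\[
\pi_t^{(n)}(\phi) = \pi_0^{(n)}(\phi) + \int_0^t \pi_s^{(n)}(L_n^{(1)}\phi)\, ds + M_t^{(n)}(\phi),
\]
which exhibits $\pi_\cdot^{(n)}(\phi)$ as a drift plus a martingale. The martingale part is immediately controlled by Lemma~\ref{LemmaDynkin}(2): since $\E_n[\sup_{t\le T} M_t^{(n)}(\phi)^2]\to 0$, it is tight and indeed converges to $0$. For the drift I would verify the Aldous condition: for stopping times $\tau \leq \tau+\theta \leq T$,
\[
\E_n\!\left[\left| \int_\tau^{\tau+\theta} \pi_s^{(n)}(L_n^{(1)}\phi)\, ds \right|\right] \leq \theta \, \sup_{s\le T} \E_n\!\left[\, \big|\pi_s^{(n)}(L_n^{(1)}\phi)\big| \,\right],
\]
and, bounding $|\pi_s^{(n)}(L_n^{(1)}\phi)| \le \tfrac{1}{n^d}\sum_x |L_n^{(1)}\phi(x/n)|\,\eta_{r(n)s}(x/n)$ and invoking Proposition~\ref{Growthcontrolone} again, the supremum is at most $(C_1+\norm{\rho_0}_\infty)\,\tfrac{1}{n^d}\sum_x |L_n^{(1)}\phi(x/n)|$.

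The step I expect to be the real obstacle is showing that $\tfrac{1}{n^d}\sum_x |L_n^{(1)}\phi(x/n)|$ is bounded uniformly in $n$, and here the long-range nature of the dynamics enters in an essential way. Unlike the short-range case, $L\phi$ is \emph{not} compactly supported; however, for $\phi \in C_k^\infty(\Rd)$ and $q$ with tail $q(z)\asymp |z|^{-(d+\beta)}$, $\beta>0$ (Assumption~\ref{AssumpNashIneq}), one has $L\phi(x) \sim \big(\int \phi\big)\, q(-x)$ as $|x|\to\infty$, which is integrable, so $L\phi \in L^1(\Rd)$. Combining this with the $L^1$-type approximation~\eqref{ApproxL01} of Assumption~\ref{RemarkConseqInvariance}, the Riemann sums $\tfrac{1}{n^d}\sum_x |L_n^{(1)}\phi(x/n)|$ converge to $\norm{L\phi}_{L^1(\Rd)} < \infty$ and are hence uniformly bounded. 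With this bound the drift obeys $\E_n[|\int_\tau^{\tau+\theta}\cdots|]\le C\theta$, which vanishes as $\theta \to 0$ uniformly in $n$ and $\tau$, verifying the Aldous condition. Together with the control of the martingale part, the Aldous--Rebolledo criterion gives tightness of each scalar process, and the reduction above then yields tightness of $\{\, Q_n \,\}$ in $D([0,\infty),\caM_+(\Rd))$, since tightness on $[0,\infty)$ follows from tightness on each $[0,T]$.
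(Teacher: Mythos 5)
Your overall strategy is sound and genuinely different from the paper's. The paper uses a Prohorov-type criterion requiring (a) compact containment at each \emph{fixed} time $t$ and (b) a bound on the uniform modulus of continuity of the measure-valued process itself; for (b) it controls \emph{second} moments of the drift increments via Cauchy--Schwarz and the two-point correlation bound of Proposition~\ref{Growthcontroltwo}, see~\eqref{Thightnesslast}. You instead invoke a Jakubowski-type reduction to scalar projections and verify Aldous' condition for the drift using only \emph{first} moments (Proposition~\ref{Growthcontrolone}); the martingale part is handled identically in both proofs via Lemma~\ref{LemmaDynkin}(2). Your route is more economical in the correlation estimates it requires, and your justification that $\tfrac{1}{n^d}\sum_x |L_n^{(1)}\phi(x/n)|$ is uniformly bounded (the tail bound $q(z)\lesssim |z|^{-(d+\beta)}$ giving $L\phi \in L^1(\Rd)$, combined with~\eqref{ApproxL01}) is in fact more careful than the paper's treatment of the corresponding point: after~\eqref{Thightnesslast} the paper merely asserts that the double sum $\tfrac{1}{n^{2d}}\sum_{x,y}|L_n^{(1)}\phi(x/n)|\,|L_n^{(1)}\phi(y/n)|$ --- which is precisely the square of your quantity --- is bounded ``by Assumption~\ref{RemarkConseqInvariance}''.

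One step is imprecise as written. The Jakubowski criterion you appeal to requires the \emph{uniform-in-time} compact containment condition $\inf_n \P_n\bigl( \pi_t^{(n)} \in K \text{ for all } t \in [0,T]\bigr) \geq 1-\epsilon$, whereas your first-moment bound $\E_n\bigl[\pi_t^{(n)}(\phi)\bigr] \leq C$ only yields, via Markov's inequality, containment at each fixed $t$ --- which is what the paper's criterion needs, but not yours: a supremum over uncountably many times is not controlled by suprema of expectations at fixed times. The gap is closable with tools you already have. Either (a) observe that for the vague topology condition (i) actually follows from your condition (ii): tightness of the projections $\pi_\cdot^{(n)}(\phi_j)$ in $D([0,T],\R)$ for a countable family $\phi_j \in C_k^\infty(\Rd)$ with $\phi_j \geq \1_{[-j,j]^d}$ gives constants $M_j$ with $\sup_n \P_n \bigl(\sup_{t \leq T} \pi_t^{(n)}(\phi_j) > M_j \bigr) \leq \epsilon 2^{-j}$ (compact subsets of $D([0,T],\R)$ are bounded in supremum norm), and the set $\lbrace \mu \in \caM_+(\Rd) : \mu(\phi_j) \leq M_j \ \forall j \rbrace$ is compact; or (b) bound $\E_n\bigl[\sup_{t\leq T}\pi_t^{(n)}(\phi)\bigr]$ directly from your own decomposition, since the drift contributes at most $\int_0^T |\pi_s^{(n)}(L_n^{(1)}\phi)|\,ds$ and the martingale part is controlled by Lemma~\ref{LemmaDynkin}(2). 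With either repair the proof is complete.
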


\begin{proof}
  By Prohorov's theorem (see for example~\cite[Theorem~1.3]{kipnis1998scaling}
  or~\cite[Appendix~A.10]{seppalainen2008translation}, it is enough to show the following:
\begin{enumerate}
\item Compact containment: For each $t \in [0,\infty)$ and any $\epsilon >0$ there exists a compact set
  $K \subset \caM_+(\Rd)$ such that
  \begin{equation}
    \inf_{n} \P_n ( \pi_t^{(n)} \in K) > 1- \epsilon.
  \end{equation} 
\item Modulus of continuity: For every $\epsilon >0$ and any $T \in (0,\infty)$, there exists a $\delta>0$
  such that
  \begin{equation}
    \limsup_{n \to \infty} \P_n\left( \omega(\pi_n(\cdot),\delta,T) \geq \epsilon  \right) \leq \epsilon,
  \end{equation} 
  where the (modified) modulus of continuity is given by
  \begin{equation}
    \omega(\pi,\delta,T) =\sup \lbrace d_{\caM}(\pi(s),\pi(t)) : s, t \in [0,T], |s-t| \leq \delta \rbrace, 
  \end{equation} 
  with
  \begin{equation}
    d_{\caM}(\pi_1, \pi_2) := \sum_{i \geq 1} 2^{-i} \min \left\lbrace \int \phi_i \, d(\pi_1 - \pi_2) , 1 \right\rbrace, 
  \end{equation} 
  where $\phi_i \in C_k^\infty(\Rd)$ is a rich enough countable family of functions (see~\cite[Appendix
  A.10]{seppalainen2008translation} for more details).
\end{enumerate}

To show compact containment, fix $\epsilon >0$, and let us consider the set
\begin{equation}
  K = \left\lbrace \mu \in \caM_+(\Rd) \mid \mu([-k,k]^d)
    \leq \frac{\alpha \left(C_1 + \norm{\rho_0}_\infty\right) }{\epsilon} (2k+1)^d
    \quad \text{for all } [-k,k]^d \subset \Rd  \right\rbrace, 
\end{equation} 
where $\rho_0$ and $C_1$ are already given in
Assumption~\ref{HydroAssump}. By~\cite[Proposition~A.25]{seppalainen2008translation}, $K$ is a pre-compact
subset of $\caM_+(\Rd)$ and hence for $\bar{K}$ being the closure of $K$ in $\caM_+(\Rd)$ with respect to
  $d_{\caM}$
\begin{equation}
\P_n ( \pi_n(t) \in \bar{K})  \geq  \P_n ( \pi_n(t) \in K).
\end{equation} 
Then by the Markov inequality
\begin{align} 
  \P_n \left( \pi_n(t)([-k,k]^d) \right.
  &\geq \left. \alpha (2k+1)^d \epsilon^{-1} \left(C_1 + \norm{\rho_0}_\infty\right) \right) \nn \\
  &\leq \frac{\epsilon}{\alpha (2k+1)^d n^d \left(C_1 + \norm{\rho_0}_\infty\right)}
    \sum_{ \substack{x \in [-k,k]^d \\ x \in n^{-1} \Zd}} \E_n\left[  \eta_{r(n)t}(x/n) \right] \leq \epsilon, \nn 
\end{align}
where in the last line we used~\eqref{Growthcontroloneineq} from Proposition~\ref{Growthcontrolone} and the
fact that there are $(2kn+1)^d$ points in $[k,k]^d \cap n^{-1} \Zd$.

For the modulus of continuity we first establish the bound
\begin{align}
  \label{BoundModulusone}
  \omega(\pi_n(\cdot), \delta, T)
  &= \sup_{\substack{|s-t| \leq \delta \\ s,t \in[0,T]}} \sum_{j=1}^\infty 2^{-j} (1 \wedge |\pi_n(s, \phi_j)- \pi_n(t,\phi_j)|) \leq 2^{-m} + \sup_{\substack{|s-t| \leq \delta \\ s,t \in[0,T]}} \sum_{j=1}^m  |\pi_n(s, \phi_j) -\pi_n(t,\phi_j)|.
\end{align}
Since $m$ is arbitrary, it is enough to show that the second term in the right-hand side
of~\eqref{BoundModulusone} vanishes in expectation. More precisely we show
\begin{equation}
  \lim_{\delta \to 0}
  \limsup_{n \to \infty} \E_n \left[ \sup_{\substack{|s-t| \leq \delta \\ s,t \in[0,T]}} |\pi_n(s, \phi_j)
    -\pi_n(t,\phi_j)|^2 \right] = 0.
\end{equation}  
From Dynkin's formula, and the basic inequality $(a+b)^2 \leq 2 a^2 + 2 b^2$, it follows that 
\begin{align}
\E_n \left[ \sup_{\substack{|s-t| \leq \delta \\ s,t \in[0,T]}}  |\pi_n(s, \phi) -\pi_n(t,\phi)|^2 \right] &\leq  2 \E_n \left[  \sup_{\substack{|s-t| \leq \delta \\ s,t \in[0,T]}} \left| M_t^{(n)}(\phi) - M_s^{(n)}(\phi)  \right|^2 \right] \nn \\
&{}\quad+2 \E_n \left[  \sup_{\substack{|s-t| \leq \delta \\ s,t \in[0,T]}} \left( \int_s^t \loc_n \pi_n(r,\phi) \, dr \right)^2 \right].
\end{align}
By Lemma~\ref{LemmaDynkin} we can see that the first term in the right-hand side vanishes as $n$ goes to
infinity. We continue the analysis of the second term. By~\eqref{ActionGenOnepart} 
\begin{align}
  \E_n \left[  \sup_{\substack{|s-t| \leq \delta \\ s,t \in[0,T]}} \left( \int_s^t \loc_n \pi_n(u,\phi) \, du \right)^2 \right] &= \frac{1}{n^{2d}} \E_n \left[  \sup_{\substack{|s-t| \leq \delta \\ s,t \in[0,T]}} \left( \int_s^t \sum_{x \in \Zd} \eta_{r(n)u}(x/n) L_n^{(1)} \phi(x/n)  \, du \right)^2 \right] \nonumber \\
  &\leq \frac{\delta}{n^{2d}} \E_n \left[  \sup_{\substack{|s-t| \leq \delta \\ s,t \in[0,T]}}  \int_s^t \left( \sum_{x \in \Zd} \eta_{r(n)u}(x/n) L_n{(1)} \phi(x/n) \right)^2 \, du  \right],
\end{align}
where we used Cauchy-Schwarz inequality. We then get rid of the supremum by extending the time integral to the
whole interval $[0,T]$ and use Proposition~\ref{Growthcontroltwo} to obtain
\begin{align}\label{Thightnesslast}
  \E_n \left[  \sup_{\substack{|s-t| \leq \delta \\ s,t \in[0,T]}} \left( \int_s^t \loc_n \pi_n(u,\phi) \, du \right)^2 \right] &\leq  \delta T \alpha^2 C_{1,2}^*\norm{\rho_0}_\infty^2 \frac{1}{n^{2d}}\sum_{x, y \in \Zd}  \left| L_n^{(1)} \phi(x/n) \right| \left|L_n^{(1)} \phi(y/n) \right| \nonumber \\
  &+ \delta T \alpha \left( C_{1,2}^* \norm{\rho_0}_\infty^2   
  + C_1 + \norm{\rho_0}_\infty \right)  \frac{1}{n^{2d}} \sum_{x \in \Zd}  \left( L_n^{(1)}\phi(x/n) \right)^2. 
\end{align}
We conclude since by Assumption~\ref{RemarkConseqInvariance} the double sum on the right-hand side is bounded
and hence vanishes as $\delta \to 0$. Second, due to standard estimates and the factor $\tfrac{1}{n^{2d}}$, as
first $n \to \infty$ and then $\delta \to 0$ the second term vanishes as well.
\end{proof}

\subsubsection{Uniqueness of limit points: hydrodynamics}
Let $\pi$ be a limit point of the sequence $\pi_n$, relabelled for convenience; existence of $\pi$ is ensured by
  Lemma~\ref{LemmaTightHydro}. 
  In order
  to conclude uniqueness, 
  the idea is to use Lemma~\ref{LemmaDynkin} to obtain that 
$\pi_t$ satisfies
\begin{equation}
  \label{spaceweakform} 
  \pi_t(\phi) = \pi_0(\phi) + \int_0^t \pi_s(L\phi) ds,
\end{equation} 
for any $\phi \in C_k^\infty(\Rd)$. However, there is an additional need of justification to establish the
limit
\begin{equation}
  \label{MeanReplaGen}
  \pi_s^{(n)}(L_n^{(1)} \phi) \to \pi_s(L \phi).
\end{equation} 
It is enough to show that
\begin{equation}
  \lim_{n \to \infty} \E_n \left[ \left( \int_0^t \pi_s^{(n)}(L_n^{(1)} \phi)
      - \pi_s(L \phi)  \, ds\right)^2 \right] =0,
\end{equation} 
which along the same lines as in~\eqref{Thightnesslast}, is a consequence of~\eqref{ApproxL01} and
Proposition~\ref{Growthcontroltwo}.

\begin{remark} 
  By considering time-dependent test functions $\phi$, we can obtain the following space-time formulation
  of~\eqref{spaceweakform}:
  \begin{equation}\label{spacetimeweakform} 
    \pi_t(\phi(t,\cdot)) = \pi_0(\phi(0,\cdot)) + \int_0^t \pi_s(( \partial_s+ L  )\phi(s,\cdot)) \,ds.
  \end{equation} 
\end{remark}

\subsubsection*{Propagation of absolute continuity}
At time zero, the limiting measure $\pi_0$ is absolutely continuous, with density $\rho_0$, with respect to
the Lebesgue measure of $\Rd$. Let $S \subset \Rd$ be a measurable set of Lebesgue measure zero. Consider the
function
\begin{equation*}
  \phi(s,x) = S_{(t-s)}^L \1_{S}(x), 
\end{equation*} 
where $S_{(t-s)}^L$ is the semi-group of the operator $L$. Notice that $\phi(t,x)$ is regular enough to
substitute in~\eqref{spacetimeweakform}, and that it also satisfies
\begin{equation*}
  (L + \partial_t) \phi(s,x) = 0 \nn 
\end{equation*} 
and
\begin{equation*}
  \lim_{s \to 0} \phi(s,x) = \1_{S}(x), 
\end{equation*} 
for all $x \in \Rd$. This allows us to conclude the absolute continuity of $\pi_t$, namely
\begin{equation*}
  \pi_t( S) = 0.
\end{equation*}

\subsubsection*{Characterization of limit points}
Now that we have established absolute continuity of the measure $\pi_t$, we can
write~\eqref{spacetimeweakform} in terms of its density $\rho(t,x)$, namely
\begin{equation}
  \int \rho(t,x) \phi(t,x) \, dx = \int \rho(0,x) \phi(0,x) \, dx
  + \int_0^t \int \rho(s,x) (\partial_s + L ) \phi(s,x) \, dx. \nn 
\end{equation} 
Thus, we have characterized a generic limit point $\pi_t$ as concentrated on weak solutions
of~\eqref{HydroEquationLJ}. The uniqueness of such weak solutions, see Assumption~\ref{AssumpUniqueWeakSolu}
and \cite[Section 8 ]{jara2008hydrodynamic}, implies the uniqueness of the limit point $\pi_t$.

\subsection{Proof of fluctuations}

\subsubsection{Consequences of self-duality: fluctuations}
\label{ConsDualFluc}
We first extend the results of Proposition~\ref{Growthcontrolone} and Proposition~\ref{Growthcontroltwo} to
correlations of order three and four.

\begin{proposition}
  \label{GrowthcontrolFluc}
  Under Assumption~\ref{HydroAssump} on the initial distribution of particles, we have that there exists a
  constant $C>0$ depending on $\rho_0$ and $C_i$ for $i \in \lbrace 1,2,3 \rbrace$, but not in $n$, such that:
\begin{align}
&\E_n \left[   \eta_{r(n)t}(x/n) \, \eta_{r(n)t}(w/n) \, \eta_{r(n)t}(z/n)  \right] \nn \\
&\leq   \,\sum_{x\myprime, w\myprime, z\myprime} p_{r(n)t}^{(3)}(x/n,w/n, z/n; x\myprime/n, w\myprime/n, z\myprime/n)\rho_0(x\myprime/n) \rho_0(w\myprime/n) \rho_0(z\myprime/n) \nn \\
 &+ \frac{1}{\alpha}  \1_{x=w}  \sum_{x\myprime, w\myprime, z\myprime} p_{r(n)t}^{(3)}(x/n,x/n, z/n; x\myprime/n, w\myprime/n, z\myprime/n)\rho_0(x\myprime/n) \rho_0(w\myprime/n) \rho_0(z\myprime/n) \nn \\
&+  \1_{x=w}  \, \sum_{x\myprime, z\myprime} p_{r(n)t}^{(2)}(x/n,z/n; x\myprime/n,z\myprime/n)\rho_0(x\myprime/n)  \rho_0(z\myprime/n) \nn \\
 &+ \frac{1}{\alpha}  \1_{x=z}  \sum_{x\myprime, w\myprime, z\myprime} p_{r(n)t}^{(3)}(x/n,w/n, x/n; x\myprime/n, w\myprime/n, z\myprime/n)\rho_0(x\myprime/n) \rho_0(w\myprime/n) \rho_0(z\myprime/n)\nn \\
&+\1_{x=z}  \, \sum_{x\myprime, w\myprime} p_{r(n)t}^{(2)}(x,w; x\myprime,w\myprime)\rho_0(x\myprime/n) \rho_0(w\myprime/n) + \frac{C}{n},
\end{align}
for all $t \geq 0$, and all $x,w, z \in \Zd$ such that $w \neq z$.
\end{proposition}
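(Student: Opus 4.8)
The plan is to follow the same three-step scheme as in the proof of Proposition~\ref{Growthcontroltwo}, now applied to a product of three occupation variables. First I would record the purely algebraic decomposition of $\eta(x)\,\eta(w)\,\eta(z)$ into self-duality functions. Using $\eta(x)=\alpha\,d(1,\eta(x))$ together with the single-site identity $\eta(x)^2=\alpha(\alpha+1)\,d(2,\eta(x))+\alpha\,d(1,\eta(x))$, both consequences of~\eqref{eq:dual-d} and~\eqref{classicalsingleSIP}, and exploiting that $w\neq z$ is assumed, only three coincidence patterns arise: $x,w,z$ pairwise distinct, $x=w\neq z$, and $x=z\neq w$. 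In the distinct case $\eta(x)\eta(w)\eta(z)=\alpha^3 D(\delta_x+\delta_w+\delta_z,\eta)$, while for $x=w\neq z$ one gets
\[
\eta(x)^2\eta(z)=\alpha^2(\alpha+1)\,D(2\delta_x+\delta_z,\eta)+\alpha^2\, D(\delta_x+\delta_z,\eta),
\]
and symmetrically for $x=z\neq w$. The key bookkeeping observation is the splitting $\alpha^2(\alpha+1)=\alpha^3+\alpha^2$: the term $\alpha^3 D(2\delta_x+\delta_z,\eta)$ produces the ``main'' three-point contribution (the always-present $p^{(3)}$ sum), whereas $\alpha^2 D(2\delta_x+\delta_z,\eta)=\tfrac1\alpha\cdot\alpha^3 D(2\delta_x+\delta_z,\eta)$ and $\alpha^2 D(\delta_x+\delta_z,\eta)$ produce exactly the two $\1_{x=w}$ correction terms, with prefactors $\tfrac1\alpha$ and $1$, that appear in the statement.

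Second, I would take $\E_n[\cdot]$ of each term and apply self-duality in coordinate form~\eqref{dual1coord}, exactly as in~\eqref{twopointscorr}. This rewrites $\alpha^k\E_n[D(\cdot,\eta_{r(n)t})]$ as $\alpha^k\sum_{\mathrm{primes}} p^{(k)}_{r(n)t}(\,\cdot\,;\,\cdot\,)\int_\Omega D(\,\cdot\,,\eta)\,\nu_{\rho_0}^n(d\eta)$, with $k=3$ for the $D(\delta_x+\delta_w+\delta_z,\cdot)$ and $D(2\delta_x+\delta_z,\cdot)$ pieces (started from $(x,w,z)$ and from the coincident configuration $(x,x,z)$, respectively), and $k=2$ for the $D(\delta_x+\delta_z,\cdot)$ piece. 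No final-position diagonal needs separate treatment, since the factorized estimate used below is valid for coinciding arguments as well (Assumption~\ref{HydroAssump} allows repeated sites).

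Third, I would replace each initial-data integral by a product of profile values. The required input is the three-factor analogue of~\eqref{DsecondorderEstimate}: telescoping one factor at a time and invoking Assumption~\ref{HydroAssump} for $k=3,2,1$ yields
\[
\Big|\,\alpha^3\!\int_\Omega D(\delta_{a}+\delta_{b}+\delta_{c},\eta)\,\nu_{\rho_0}^n(d\eta)-\rho_0(a)\rho_0(b)\rho_0(c)\,\Big|\le \frac{C}{n},
\]
uniformly in the sites $a,b,c$, where at each replacement the residual factors stay bounded by $C_1+\norm{\rho_0}_\infty$ thanks to Propositions~\ref{Growthcontrolone} and~\ref{Growthcontroltwo}; the corresponding two-factor bound is~\eqref{DsecondorderEstimate} itself. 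Substituting these into the duality expressions, the leading parts reassemble into the stated sums $\sum p^{(3)}\rho_0\rho_0\rho_0$, $\tfrac1\alpha\sum p^{(3)}\rho_0\rho_0\rho_0$ and $\sum p^{(2)}\rho_0\rho_0$. Since every replacement error is bounded by a constant over $n$ uniformly in space, and is integrated against the kernels $p^{(k)}$ for which $\sum_{\mathrm{primes}} p^{(k)}\le 1$, all errors accumulate to at most $C/n$, which is collected into the final term.

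I expect the main obstacle to be precisely this last, combinatorial step: organizing the telescoping so that every discarded term is genuinely $O(1/n)$ after summation over the primed lattice, i.e.\ verifying that the factors left after each single-factor replacement remain bounded uniformly in $n$ and in all sites, so that the uniform-in-space rate $C_k/n$ of Assumption~\ref{HydroAssump} can be pulled out of the sums. The algebraic decomposition and the passage to transition probabilities are routine; tracking the coefficient $\tfrac1\alpha$ and which kernel $p^{(k)}$ multiplies which product is the only delicate part.
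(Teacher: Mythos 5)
Your proposal is correct and follows essentially the same route as the paper's proof: the same self-duality decomposition of $\eta(x)\eta(w)\eta(z)$ according to the coincidence patterns allowed by $w\neq z$ (with coefficients $\alpha^3$, $\alpha^2(\alpha+1)=\alpha^3+\alpha^2$, and $\alpha^2$, which is exactly how the paper's terms with prefactors $1$, $\tfrac1\alpha$, and $1$ arise), followed by coordinate-form duality to pass to the kernels $p^{(3)}$, $p^{(2)}$, and then the telescoping replacement of the initial-data integrals by products of $\rho_0$ via repeated application of Assumption~\ref{HydroAssump}, with all $O(1/n)$ errors absorbed using $\sum p^{(k)}\leq 1$. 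This matches the paper's computation (its analogues of your steps are the displayed duality expansion and the estimates~\eqref{DthirdorderEstimate}--\eqref{DthirdorderEstimatetwo}), so there is nothing to correct.
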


\begin{proof}
For the three-points correlations, first notice that by self-duality
\begin{align}
 &\E_n \left[  \eta_{r(n)t}(x/n)  \eta_{r(n)t}(w/n)\eta_{r(n)t}(z/n)  \right] \nn \\
 &= \alpha^3 \1_{x\neq w} \1_{x\neq z} \,\sum_{x\myprime, w\myprime, z\myprime} p_{r(n)t}^{(3)}(x/n,w/n, z/n; x\myprime/n, w\myprime/n, z\myprime/n)\int_\Omega D(\delta_{x\myprime/n}+\delta_{w\myprime/n}+\delta_{z\myprime/n},\eta) \, \nu_{\rho_0}^n(d\eta) \nn \\
 &+ \alpha^2 (\alpha+1) \1_{x=w}  \sum_{x\myprime, w\myprime, z\myprime} p_{r(n)t}^{(3)}(x/n,x/n, z/n; x\myprime/n, w\myprime/n, z\myprime/n)\int_\Omega D(\delta_{x\myprime/n}+\delta_{w\myprime/n}+\delta_{z\myprime/n},\eta) \, \nu_{\rho_0}^n(d\eta) \nn \\
&+ \alpha^2 \1_{x=w}  \, \sum_{x\myprime, z\myprime} p_{r(n)t}^{(2)}(x/n,z/n; x\myprime/n,z\myprime/n)\int_\Omega D(\delta_{x\myprime/n}+\delta_{z\myprime/n},\eta) \, \nu_{\rho_0}^n(d\eta) \nn \\
 &+ \alpha^2 (\alpha+1) \1_{x=z}  \sum_{x\myprime, w\myprime, z\myprime} p_{r(n)t}^{(3)}(x/n,w/n, x/n; x\myprime/n, w\myprime/n, z\myprime/n)\int_\Omega D(\delta_{x\myprime/n}+\delta_{w\myprime/n}+\delta_{z\myprime/n},\eta) \, \nu_{\rho_0}^n(d\eta)\nn \\
&+ \alpha^2 \1_{x=z}  \, \sum_{x\myprime, w\myprime} p_{r(n)t}^{(2)}(x,w; x\myprime,w\myprime)\int_\Omega D(\delta_{x\myprime/n}+\delta_{w\myprime/n},\eta) \, \nu_{\rho_0}^n(d\eta).
\end{align}
We can estimate the two-point sums with~\eqref{DsecondorderEstimate}. For the three-point sums, we proceed
  analogously to~\eqref{DsecondorderEstimate} and apply Assumption~\ref{HydroAssump} multiple times as
  follows: \small 
\begin{align}
  \label{DthirdorderEstimate}
&\alpha^3\int_\Omega D(\delta_{x\myprime/n}+\delta_{w\myprime/n}+\delta_{z\myprime/n},\eta) \, \nu_{\rho_0}^n(d\eta) \nn \\
&\leq \alpha^3\left| \int_\Omega D(\delta_{x\myprime/n}+\delta_{w\myprime/n}+\delta_{z\myprime/n},\eta) \, \nu_{\rho_0}^n(d\eta) -\int_\Omega D(\delta_{x\myprime/n},\eta) \, \nu_{\rho_0}^n(d\eta) \int_\Omega D(\delta_{w\myprime/n},\eta) \, \nu_{\rho_0}^n(d\eta) \int_\Omega D(\delta_{z\myprime/n},\eta) \, \nu_{\rho_0}^n(d\eta)  \right| \nn \\
&+ \left|\alpha^2 \int_\Omega D(\delta_{w\myprime/n},\eta) \, \nu_{\rho_0}^n(d\eta) \int_\Omega D(\delta_{z\myprime/n},\eta) \, \nu_{\rho_0}^n(d\eta)   \right| \left|\alpha \int_\Omega D(\delta_{x\myprime/n},\eta) \, \nu_{\rho_0}^n(d\eta) - \rho_0(x\myprime/n) \right| \nn \\
&+ \left| \alpha \rho_0(x\myprime/n) \, \int_\Omega D(\delta_{z\myprime/n},\eta) \, \nu_{\rho_0}^n(d\eta)  \right| \left| \alpha  \int_\Omega D(\delta_{w\myprime/n},\eta) \, \nu_{\rho_0}^n(d\eta)  -  \rho_0(w\myprime/n) \right| \nn \\
&+ \left|  \rho_0(x\myprime/n) \,  \rho_0(w\myprime/n)  \right| \left| \alpha  \int_\Omega D(\delta_{z\myprime/n},\eta) \, \nu_{\rho_0}^n(d\eta)  -  \rho_0(z\myprime/n) \right| \nn \\
&+ \rho_0(x\myprime/n) \rho_0(w\myprime/n) \rho_0(z\myprime/n) \nn \\
&\leq \frac{C_3}{n} + \left|\alpha^2 \int_\Omega D(\delta_{w\myprime/n},\eta) \, \nu_{\rho_0}^n(d\eta) \int_\Omega D(\delta_{z\myprime/n},\eta) \, \nu_{\rho_0}^n(d\eta)   \right| \frac{C_1}{n} +  \rho_0(x\myprime/n) \, \left|  \alpha \int_\Omega D(\delta_{z\myprime/n},\eta) \, \nu_{\rho_0}^n(d\eta)  \right| \frac{C_1}{n} \nn \\
&+  \rho_0(x\myprime/n) \,  \rho_0(w\myprime/n)  \frac{C_1}{n} + \rho_0(x\myprime/n) \rho_0(w\myprime/n) \rho_0(z\myprime/n). 
\end{align} 
\normalsize 
To simplify further,  we apply Assumption~\ref{HydroAssump} again,
\begin{align}
  \label{DthirdorderEstimatetwo}
&\alpha^3\int_\Omega D(\delta_{x\myprime/n}+\delta_{w\myprime/n}+\delta_{z\myprime/n},\eta) \, \nu_{\rho_0}^n(d\eta) \nn \\
&\leq \frac{C_3}{n} + \left|\alpha^2 \int_\Omega D(\delta_{w\myprime/n},\eta) \, \nu_{\rho_0}^n(d\eta) \int_\Omega D(\delta_{z\myprime/n},\eta) \, \nu_{\rho_0}^n(d\eta)  -\alpha \rho_0(w\myprime/n) \int_\Omega D(\delta_{z\myprime/n},\eta) \, \nu_{\rho_0}^n(d\eta)  \right| \frac{C_1}{n} \nn \\
&+ \left|\alpha \rho_0(w\myprime/n) \int_\Omega D(\delta_{z\myprime/n},\eta) \, \nu_{\rho_0}^n(d\eta) -\rho_0(w\myprime/n) \rho_0(z\myprime/n) \right| \frac{C_1}{n} +  \rho_0(w\myprime/n) \,  \rho_0(z\myprime/n)  \frac{C_1}{n} \nn \\
&+  \rho_0(x\myprime/n) \, \left|  \alpha \int_\Omega D(\delta_{z\myprime/n},\eta) \, \nu_{\rho_0}^n(d\eta)  -\rho_0(z\myprime/n)\right| \frac{C_1}{n} +\rho_0(x\myprime/n) \, \rho_0(z\myprime/n)  \frac{C_1}{n}\nn \\
&+  \rho_0(x\myprime/n) \,  \rho_0(w\myprime/n)  \frac{C_1}{n} + \rho_0(x\myprime/n) \rho_0(w\myprime/n) \rho_0(z\myprime/n) \nn \\
&\leq \frac{C_3}{n} + \frac{C_1^3}{n^{3}}+\rho_0(z\myprime/n) \frac{C_1^2}{n^{2}} + \rho_0(w\myprime/n) \frac{C_1^2}{n^{2}} +  \rho_0(w\myprime/n) \,  \rho_0(z\myprime/n)  \frac{C_1}{n} \nn \\
&+  \rho_0(x\myprime/n) \frac{C_1^2}{n^{2}}+\rho_0(x\myprime/n) \rho_0(z\myprime/n)  \frac{C_1}{n}+  \rho_0(x\myprime/n) \,  \rho_0(w\myprime/n)  \frac{C_1}{n} + \rho_0(x\myprime/n) \rho_0(w\myprime/n) \rho_0(z\myprime/n). 
\end{align} 
\begin{remark}
  The previous expression has structure we do not use; for us it suffices to have the bound
  \begin{equation}
    \rho_0(x\myprime/n) \rho_0(w\myprime/n) \rho_0(z\myprime/n) + \frac{C(\norm{\rho}_\infty,C_1,C_2,C_3)}{n},
  \end{equation} 
  where $C(\norm{\rho}_\infty,C_1,C_2,C_3)$ is a constant that does not depend on $n$.
\end{remark} 
\end{proof}

In the same way we can also bound four points correlations.

\begin{proposition}
  \label{GrowthcontrolFluc4points}
  Under Assumption~\ref{HydroAssump} on the initial distribution of particles, there exists a constant $C>0$
  which depends on $\rho_0$ and $C_i$ for $i \in \lbrace 1,2,3,4 \rbrace$, but does not depend on $n$,
  such that \small
\begin{align}
\E_n &\left[  \eta_{r(n)t}(x/n)\eta_{r(n)t}(y/n)  \eta_{r(n)t}(w/n)\eta_{r(n)t}(z/n) \right] \nn \\
 &\leq \sum_{x\myprime, y\myprime,w\myprime, z\myprime} p_{r(n)t}^{(4)}(x/n,y/n,w/n,z/n; x\myprime/n, y\myprime/n,w\myprime/n, z\myprime/n)  \rho_0(x\myprime/n) \rho_0(y\myprime/n) \rho_0(w\myprime/n) \rho_0(z\myprime/n) \nn \\
 &+ \1_{x=w} \1_{y\neq z} \frac{2}{\alpha}   \sum_{x\myprime, y\myprime,w\myprime, z\myprime}  p_{r(n)t}^{(4)}(x/n,y/n,x/n,z/n; x\myprime/n, y\myprime/n,w\myprime/n, z\myprime/n)  \rho_0(x\myprime/n) \rho_0(y\myprime/n) \rho_0(w\myprime/n) \rho_0(z\myprime/n) \nn \\ 
&+ \1_{x=w} \1_{y\neq z} \sum_{x\myprime, y\myprime,w\myprime, z\myprime}  p_{r(n)t}^{(3)}(x,y,z; x\myprime, y\myprime, z\myprime)  \rho_0(x\myprime/n) \rho_0(y\myprime/n) \rho_0(z\myprime/n) \nn \\ 
&+ \1_{x=w} \1_{y=z} \frac{2(2\alpha+1)}{\alpha^2}  \sum_{x\myprime, y\myprime,w\myprime, z\myprime} p_{r(n)t}^{(4)}(x/n,y/n,x/n,y/n; x\myprime/n, y\myprime/n,w\myprime/n, z\myprime/n)  \rho_0(x\myprime/n) \rho_0(y\myprime/n) \rho_0(w\myprime/n) \rho_0(z\myprime/n) \nn \\ 
&+ \1_{x=w} \1_{y=z} \frac{2(\alpha+1)}{\alpha}  \sum_{x\myprime, y\myprime,w\myprime, z\myprime}  p_{r(n)t}^{(3)}(x/n,y/n,x/n; x\myprime/n, y\myprime/n,w\myprime/n)  \rho_0(x\myprime/n) \rho_0(y\myprime/n) \rho_0(w\myprime/n) \nn \\ 
&+ \1_{x=w} \1_{y=z} \frac{2(\alpha+1)}{\alpha}  \sum_{x\myprime, y\myprime,w\myprime, z\myprime}  p_{r(n)t}^{(3)}(x/n,y/n,y/n; x\myprime/n, y\myprime/n,w\myprime/n)  \rho_0(x\myprime/n) \rho_0(y\myprime/n) \rho_0(w\myprime/n)  \nn \\ 
&+ \1_{x=w} \1_{y=z} 2   \sum_{x\myprime, y\myprime,w\myprime, z\myprime}  p_{r(n)t}^{(2)}(x/n,y/n; x\myprime/n, y\myprime/n)  \rho_0(x\myprime/n) \rho_0(y\myprime/n)  + \frac{C}{n},
\end{align}
\normalsize
for all $t \geq 0$, and all $x,y,w, z \in \Zd$ such that $x\neq y$ and $w \neq z$.
\end{proposition}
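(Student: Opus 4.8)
The plan is to follow, essentially verbatim, the scheme of Proposition~\ref{GrowthcontrolFluc}, only now carrying four occupation variables and hence more coincidence patterns. I would proceed in three stages: an algebraic expansion of the product of four occupation numbers into self-duality functions, a passage to the dual coordinate process via \eqref{dual1coord}, and a telescoping application of Assumption~\ref{HydroAssump}. Throughout, non-negativity of $\eta$, of $q$, and of the kernels $p_{r(n)t}^{(k)}$ lets me pass freely to inequalities and dump any surplus lower-order terms into the single error $C/n$, exactly as anticipated in the remark following Proposition~\ref{GrowthcontrolFluc}.

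First I would expand $\eta(x)\eta(y)\eta(w)\eta(z)$ as a linear combination of the factorized duality functions $D(\,\cdot\,,\eta)$. The only inputs are the single-site identities $\eta(x)=\alpha D(\delta_x,\eta)$ from \eqref{eq:D-eta} and $\eta(x)^2=\alpha(\alpha+1)D(2\delta_x,\eta)+\alpha D(\delta_x,\eta)$, which follow from \eqref{eq:dual-d} and \eqref{classicalsingleSIP} just as in the two-point identity used in Proposition~\ref{Growthcontroltwo}. Under the standing hypotheses $x\neq y$ and $w\neq z$, the admissible coincidences are $x=w$, $x=z$, $y=w$, $y=z$, together with the doubly-coinciding cases $\{x=w,y=z\}$ and $\{x=z,y=w\}$; by the full symmetry of the product in its four arguments these group into those in which a single pair coincides and those in which two pairs coincide. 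Keeping the fully-distinct contribution $\alpha^4 D(\delta_x+\delta_y+\delta_w+\delta_z,\eta)$ as the main term and recording, for each coincidence, the difference between the true coefficient and the value the main term already supplies, produces precisely the indicator-weighted structure of the statement; the rational prefactors $2/\alpha$, $2(2\alpha+1)/\alpha^2$ and $2(\alpha+1)/\alpha$ are the bookkeeping constants generated by $\eta(x)^2$ and $\eta(x)^2\eta(y)^2$, combined across the symmetric coincidence patterns (the factor $2$ reflecting that a single indicator such as $\1_{x=w}$ stands, after relabelling the dummy sums, for a symmetric pair of patterns).

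Next, applying self-duality at the level of semigroups \eqref{dual1coord} to each resulting term $\E_n[D(\,\cdot\,,\eta_{r(n)t})]$ turns the time evolution into the transition kernels $p_{r(n)t}^{(k)}$ of the $k$-particle coordinate process, $k\in\{2,3,4\}$, acting on the corresponding initial integral $\int D(\,\cdot\,,\eta)\,\nu_{\rho_0}^n(d\eta)$. It then remains to replace each such $k$-point initial integral by a product of profiles. This is the step already carried out for three points in \eqref{DthirdorderEstimate}--\eqref{DthirdorderEstimatetwo}, and I would simply repeat it with a four-fold telescoping: writing $\alpha^4\int D(\sum_{j=1}^4\delta_{x_j'/n},\eta)\,\nu_{\rho_0}^n(d\eta)-\prod_{j=1}^4\rho_0(x_j'/n)$ as a sum of four differences, each of which peels off one factor and is bounded by $C_k/n$ through Assumption~\ref{HydroAssump}, while the companion factors are the one-point integrals $\alpha\int D(\delta_{x'/n},\eta)\,\nu_{\rho_0}^n(d\eta)$ controlled by $\norm{\rho_0}_\infty+C_1/n$ as in Proposition~\ref{Growthcontrolone} (and the two-point integrals already estimated in \eqref{DsecondorderEstimate}). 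Since $\rho_0\in L^\infty$ and the constants $C_k$ are uniform in $n$, every error is of the form (bounded)$\times C_k/n$ and collects into one term $C/n$ with $C=C(\norm{\rho_0}_\infty,C_1,C_2,C_3,C_4)$.

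The only genuine obstacle is organizational: keeping the coincidence expansion consistent with the indicator conventions of the statement, and verifying that the telescoped errors are uniformly $O(1/n)$ over all $4d$ spatial variables. There is no new analytic idea beyond Proposition~\ref{GrowthcontrolFluc}; the difficulty is purely in the combinatorial accounting of which $p_{r(n)t}^{(k)}$ and which product of profiles each coincidence pattern contributes, and in checking that the coefficients reduce to those displayed.
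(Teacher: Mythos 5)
Your proposal follows exactly the route the paper intends: the paper's own proof of Proposition~\ref{GrowthcontrolFluc4points} is a one-line remark that it is ``lengthy but analogous'' to Proposition~\ref{GrowthcontrolFluc}, and your three stages --- the expansion via $\eta(x)=\alpha D(\delta_x,\eta)$ and $\eta(x)^2=\alpha(\alpha+1)D(2\delta_x,\eta)+\alpha D(\delta_x,\eta)$, the passage to the $k$-particle kernels $p^{(k)}_{r(n)t}$ by semigroup self-duality, and the telescoping of Assumption~\ref{HydroAssump} (available precisely for $k\le 4$) with all lower-order terms absorbed into $C/n$ --- are exactly that analogy carried out. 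The coincidence-pattern bookkeeping you identify is indeed the only real labor, and since the claim is an upper bound with generous (non-optimal) rational prefactors, your accounting is sufficient.
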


\begin{proof} 
The proof is lengthy but analogous to the one of Proposition~\ref{GrowthcontrolFluc}. 
\end{proof}

\subsection{Dynkin Martingales: fluctuations}

In this part of the proof of Theorem~\ref{FluctTheorem}, we 
obtain a result analogous to Lemma~\ref{LemmaDynkin}, with the difference that this time the quadratic
variation of the associated Dynkin martingales will not vanish as $n$ goes to infinity.

\subsubsection*{Closing the drift}

First we deal with the drift term.

\begin{lemma}
  \label{LemmaDrift}
  For all $\phi \in S(\Rd)$ and for all $t \in [0,T]$,
  \begin{equation}
    N_t^{(n)}(\phi) = Y_t^{(n)}(\phi,\eta) - Y_0^{(n)}(\phi,\eta) -\int_0^t Y_s^{(n)}(L_n^{(1)} \phi,\eta) \, ds
  \end{equation} 
  is an $\caF_t$-martingale. 
\end{lemma}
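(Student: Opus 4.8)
The plan is to reduce $N_t^{(n)}(\phi)$ to the closed-form hydrodynamic Dynkin martingale $M_t^{(n)}(\phi)$ of \eqref{DynkinMartHydro}--\eqref{DynkinMartHydroclosed}, up to the deterministic centering, and then to show that the centering contributes nothing. Write $m_s(\psi) := \E_n[\pi_s^{(n)}(\psi,\eta)]$ for the deterministic mean, so that by \eqref{centereddensity} one has $Y_s^{(n)}(\psi,\eta) = \sqrt{n^d}\big(\pi_s^{(n)}(\psi,\eta) - m_s(\psi)\big)$. First I would record that, thanks to the rapid decay of $\phi \in S(\Rd)$ combined with the uniform first-moment bound of Proposition~\ref{Growthcontrolone}, the sum $\sum_{x\in\Zd}\E_n[\eta_{r(n)s}(x/n)]\,|\phi(x/n)| \lesssim \sum_x |\phi(x/n)| < \infty$, so that all sums and time integrals appearing in $N_t^{(n)}(\phi)$ converge absolutely and are integrable under $\P_n$; thus every object below is well defined.

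Next I would apply Dynkin's formula to the linear observable $F(\eta) = \sqrt{n^d}\,\pi^{(n)}(\phi,\eta) = \tfrac{1}{\sqrt{n^d}}\sum_{x\in\Zd}\eta(x/n)\phi(x/n)$. This produces the $\caF_t$-martingale $\sqrt{n^d}\,M_t^{(n)}(\phi)$, and the generator computation \eqref{ActionGenOnepart} of Lemma~\ref{LemmaDynkin}, namely $\loc_n \pi_s^{(n)}(\phi,\eta) = \pi_s^{(n)}(L_n^{(1)}\phi,\eta)$, closes its drift in the one-particle form
\[
\sqrt{n^d}\,M_t^{(n)}(\phi) = \sqrt{n^d}\big(\pi_t^{(n)}(\phi,\eta) - \pi_0^{(n)}(\phi,\eta)\big) - \int_0^t \sqrt{n^d}\,\pi_s^{(n)}(L_n^{(1)}\phi,\eta)\,ds.
\]

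Then I would identify the deterministic correction. Since $M_s^{(n)}(\phi)$ is mean zero, taking $\E_n$ in the closed form \eqref{DynkinMartHydroclosed} and differentiating in $s$ yields the forward equation for the mean, $\tfrac{d}{ds}\,m_s(\phi) = m_s(L_n^{(1)}\phi)$, equivalently $m_t(\phi) - m_0(\phi) = \int_0^t m_s(L_n^{(1)}\phi)\,ds$. Expanding $N_t^{(n)}(\phi)$ via $Y_s^{(n)}(\psi,\eta) = \sqrt{n^d}(\pi_s^{(n)}(\psi,\eta) - m_s(\psi))$ splits it into the fluctuating part $\sqrt{n^d}M_t^{(n)}(\phi)$ and the purely deterministic part $-\sqrt{n^d}\big[(m_t(\phi)-m_0(\phi)) - \int_0^t m_s(L_n^{(1)}\phi)\,ds\big]$, and the latter vanishes by the forward equation. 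Hence
\[
N_t^{(n)}(\phi) = \sqrt{n^d}\,M_t^{(n)}(\phi),
\]
so $N_t^{(n)}(\phi)$ coincides with the $\caF_t$-martingale $\sqrt{n^d}M_t^{(n)}(\phi)$ and is itself an $\caF_t$-martingale.

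The algebra of the cancellation and the drift closing are routine, both being inherited from Lemma~\ref{LemmaDynkin}. The point that requires genuine care is that $F$ is an \emph{unbounded} linear functional of the configuration, in contrast to a bounded cylinder function, so that the applicability of Dynkin's formula to $F$ and the differentiation under the expectation defining $m_s$ both need justification. This is exactly where the uniform-in-$n$ moment estimates of Proposition~\ref{Growthcontrolone} (and Proposition~\ref{Growthcontroltwo}, for the second-moment integrability that legitimises the time integral and the interchange of $\tfrac{d}{ds}$ with $\E_n$), together with the Schwartz decay of $\phi$, supply the required integrability and dominated-convergence bounds.
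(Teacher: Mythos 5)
Your proof is correct, and in substance it arrives at the same identity as the paper (both arguments amount to $N_t^{(n)}(\phi)=\sqrt{n^d}\,M_t^{(n)}(\phi)$, with the drift closed through the self-duality computation \eqref{ActionGenOnepart}), but you handle the deterministic centering by a genuinely different route. The paper applies Dynkin's formula directly to the time-dependent functional $(s,\eta)\mapsto Y_s^{(n)}(\phi,\eta)$ and therefore must compute $(\loc_n+\partial_s)Y_s^{(n)}(\phi,\eta)$: the generator ignores the deterministic part, and the time derivative of the centering $\E_n[\pi_s^{(n)}(\phi)]$ is evaluated explicitly via one-particle duality, reversibility of $L_n^{(1)}$ with respect to the counting measure, and the Kolmogorov (Hille--Yosida) equation for the one-particle semigroup, giving $(\loc_n+\partial_s)Y_s^{(n)}(\phi,\eta)=Y_s^{(n)}(L_n^{(1)}\phi,\eta)$. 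You instead apply Dynkin to the uncentered field, i.e.\ you reuse the hydrodynamic martingale $M_t^{(n)}(\phi)$ of Lemma~\ref{LemmaDynkin}, and you extract the evolution equation of the mean, $m_t(\phi)-m_0(\phi)=\int_0^t m_s(L_n^{(1)}\phi)\,ds$, essentially for free from the fact that $M_t^{(n)}(\phi)$ has mean zero (plus a Fubini exchange); the centering terms then cancel algebraically. Both approaches encode the same fact --- the mean satisfies the discrete forward equation --- but yours obtains it from the martingale property itself rather than re-deriving it from duality and transition probabilities, which makes your argument more elementary; the price is the integrability justification for Fubini and for applying Dynkin's formula to unbounded linear functionals with $\phi\in S(\Rd)$ rather than $C_k^\infty(\Rd)$, which you correctly supply via the Schwartz decay of $\phi$ together with the uniform moment bounds of Propositions~\ref{Growthcontrolone} and~\ref{Growthcontroltwo} (a point on which you are in fact more careful than the paper).
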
 

\begin{proof}
By Dynkin's formula we have that
\begin{equation}
  N_t^{(n)}(\phi) = Y_t^{(n)}(\phi,\eta) - Y_0^{(n)}(\phi,\eta) -\int_0^t
  \left( \loc_n +\partial_s \right) Y_s^{(n)}( \phi,\eta) \, ds
\end{equation} 
is an $\caF_t$-martingale.

By self-duality with one particle we have
\begin{align}
  \label{closingdriftfirststep}
  \loc_n Y_t^{(n)}(\phi,\eta) &= n^{d/2} \loc_n \pi_t^{(n)}(\phi,\eta) \nn \\
                              &= n^{d/2} \pi_t^{(n)}(L_n^{(1)} \phi, \eta),
\end{align}
where in the first equality we used that the expectation in the definition of $Y_t^{(n)}$ does not depend on $\eta$, and in the second equality we used~\eqref{ActionGenOnepart}.\\

In order close the drift, i.e. write $\loc_n Y_t^{(n)}(\phi,\eta)$  as $Y_t^{(n)}(L_n^{(1)} \phi,\eta)$, we need to substract
\begin{equation}
  \E_n \left[ n^{d/2} \pi_t^{(n)}(L_n^{(1)} \phi, \eta)  \right]. \nn 
\end{equation} 
However, again by duality, we have
\begin{align}
 \E_n \left[ n^{d/2} \pi_t^{(n)}(L_n^{(1)} \phi, \eta)  \right] &= \frac{\alpha}{n^{d/2}}  \sum_{x \in \Zd} \int_\Omega \E_{x/n} \left[ D(\delta_{X_t/n},\eta)  \right] \nu_{\rho_0}(d\eta) L_n^{(1)} \phi(x/n)  \nn \\
&= \frac{\alpha}{n^{d/2}}  \sum_{ x, x\myprime \in \Zd} p_{r(n)t}^{(1)}(x/n,x\myprime/n) \nu_{\rho_0}(x\myprime/n) L_n^{(1)} \phi(x/n) \nn \\
&= \frac{1}{n^{d/2}}  \sum_{ x \in \Zd} L_n^{(1)}\left( \alpha \sum_{ x\myprime \in \Zd} p_{r(n)t}^{(1)}(x/n,x\myprime/n) \nu_{\rho_0}(x\myprime/n)\right) \phi(x/n) \nn \\
&= \frac{1}{n^{d/2}}  \sum_{ x \in \Zd} \partial_t \left( \alpha \sum_{ x\myprime \in \Zd} p_{r(n)t}^{(1)}(x/n,x\myprime/n) \nu_{\rho_0}(x\myprime/n)\right) \phi(x/n) \nn \\
&= - \partial_t Y_t^{(n)}(\phi,\eta),
\end{align}
where $\nu_{\rho_0}(x\myprime/n) := \int \eta(x\myprime/n) \nu_{\rho_0}(d \eta)$, in the third line we used
reversibility of $L_n^{(1)}$ with respect to the counting measure in $\Zd_n$, then in the fourth-line the
Hille-Yoshida Theorem, and in the last line we simply identified the action of the operator $\partial_t$ on
the fluctuation field.
\end{proof} 

\subsubsection*{Computing the quadratic variation}

The following proposition identifies the limiting mean of the quadratic variation of the martingales
$N_t^{(n)}$ of Lemma~\ref{LemmaDrift}.
\begin{lemma}
  \label{ConvQuadraVarFluc}
  For all $\phi \in S(\Rd)$, and for all $t \in [0,T]$
  \begin{align}
    \label{ExpecQuadLimit}
    \lim_{n \to \infty} \E_n \left[  \int_0^t \Gamma_n (Y_s^{(n)}(\phi,\eta)) \, ds   \right]
    &= \int_0^t \norm{ \sqrt{\Gamma_s^{\rho} \phi}}^2_{L^2(\Rd, \rho(s,x) \, dx)} \, ds,
  \end{align}
  where $\rho(t,x)$ solves~\eqref{HydroEquationLJ}. Moreover, we also have
  \begin{align}
    \label{VanishSecondQuad}
    \lim_{n \to \infty} \E_n \left[  \int_0^t \left(\Gamma_n (Y_s^{(n)}(\phi,\eta))
    -\norm{ \sqrt{\Gamma_s^{\rho}\phi}}^2_{L^2(\Rd, \rho(s,x) \, dx)} \right)^2 \, ds   \right] &= 0.
  \end{align}
\end{lemma}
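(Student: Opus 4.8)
The plan is to compute the carré du champ $\Gamma_n$ applied to the fluctuation field explicitly, recognise its expectation as a Riemann sum converging to the target integral \eqref{ExpecQuadLimit}, and then upgrade this convergence in mean to the concentration statement \eqref{VanishSecondQuad} using the four-point correlation bounds. Since the centering term $\E_n[\pi_t^{(n)}(\phi)]$ in $Y_t^{(n)}(\phi,\eta)=n^{d/2}(\pi_t^{(n)}(\phi,\eta)-\E_n[\pi_t^{(n)}(\phi)])$ is deterministic, the configuration increments of $Y_t^{(n)}$ coincide with those of $n^{d/2}\pi_t^{(n)}$, so $\Gamma_n(Y_s^{(n)}(\phi,\eta))=n^{d}\,\Gamma_n\pi_s^{(n)}(\phi,\eta)$. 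Reusing the computation preceding \eqref{LastEquaSupM} together with \eqref{RescTransProb} and $r(n)=n^\beta$ gives
\[
\Gamma_n(Y_s^{(n)}(\phi,\eta))=\frac{1}{n^{2d}}\sum_{x,y\in\Zd}q\!\left(\tfrac{x-y}{n}\right)\eta_{r(n)s}(x/n)\,\bigl(\alpha+\eta_{r(n)s}(y/n)\bigr)\,\bigl(\phi(y/n)-\phi(x/n)\bigr)^2,
\]
in which the diagonal $x=y$ is absent because the test-function difference vanishes there.

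For \eqref{ExpecQuadLimit} I would take $\E_n$, split $\alpha+\eta(y/n)$, and treat the off-diagonal terms by combining Propositions~\ref{Growthcontrolone} and~\ref{Growthcontroltwo} with the hydrodynamic identity $\E_n[\eta_{r(n)s}(x/n)]=\sum_{x'}p^{(1)}_{r(n)s}(x/n,x'/n)\rho_0(x'/n)+O(1/n)$ from the proof of Proposition~\ref{Growthcontrolone} and the factorisation \eqref{DsecondorderEstimate}. The convergence of the discrete semigroup $p^{(1)}_{r(n)s}$ to the semigroup $S^L_s$ of $L$, a consequence of the Mosco convergence assumed in Assumption~\ref{RemarkConseqInvariance}, yields $\E_n[\eta_{r(n)s}(x/n)]\to\rho(s,u)$ and $\E_n[\eta_{r(n)s}(x/n)\eta_{r(n)s}(y/n)]\to\rho(s,u)\rho(s,v)$ for $u\neq v$ as $x/n\to u,\ y/n\to v$, with $\rho$ solving \eqref{HydroEquationLJ}. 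The resulting mesh-$1/n$ Riemann sum converges to
\[
\int_{\Rd}\int_{\Rd}q(u-v)\,\rho(s,u)\bigl(\alpha+\rho(s,v)\bigr)\bigl(\phi(v)-\phi(u)\bigr)^2\,dv\,du=\norm{\sqrt{\Gamma_s^{\rho}\phi}}_{L^2(\Rd,\rho(s,x)\,dx)}^2,
\]
identifying $\Gamma_s^\rho$ from \eqref{GenTimOUSPDE}; integrating in $s$ and invoking the uniform bound \eqref{RemBounExpRateseq} with dominated convergence gives \eqref{ExpecQuadLimit}. Near-diagonal integrability is ensured since $\bigl(\phi(v)-\phi(u)\bigr)^2\le C|u-v|^2$ while $q(u-v)\le C_2|u-v|^{-(d+\beta)}$ (Assumption~\ref{AssumpNashIneq}) with $\beta<2$.

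Writing $\ell_s:=\norm{\sqrt{\Gamma_s^{\rho}\phi}}_{L^2(\Rd,\rho(s,x)\,dx)}^2$ and expanding the square in \eqref{VanishSecondQuad}, the first limit reduces matters to showing $\E_n[\Gamma_n(Y_s^{(n)})^2]\to\ell_s^2$, i.e.\ concentration of the random carré du champ. Squaring the double sum produces a quadruple sum with prefactor $n^{-4d}$ whose expectation involves the four-point correlations controlled by Propositions~\ref{GrowthcontrolFluc} and~\ref{GrowthcontrolFluc4points}. The fully off-diagonal part factorises, up to $O(1/n)$, into the product of two copies of the mean of the previous step and converges to $\ell_s^2$, whereas every coincidence term in Proposition~\ref{GrowthcontrolFluc4points} loses at least one free macroscopic variable; since $\tfrac{1}{n^{d}}\sum_{y}q((x-y)/n)\bigl(\phi(y/n)-\phi(x/n)\bigr)^2=O(1)$, such terms carry a net factor $n^{-d}$ or smaller and vanish. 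Combined with the first part this yields $\E_n[(\Gamma_n(Y_s^{(n)})-\ell_s)^2]\to0$ for each $s$, and dominated convergence over $[0,t]$ gives \eqref{VanishSecondQuad}.

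I expect the concentration statement \eqref{VanishSecondQuad} to be the crux. One must carefully match each diagonal correction generated by the inclusion interaction in Proposition~\ref{GrowthcontrolFluc4points} against the number of free summation indices it leaves, verifying that it is genuinely of lower order once the heavy-tailed kernel has been summed, and simultaneously keep the apparently non-integrable singularity of $q$ on the diagonal under control; this last point is saved only by the smoothness of $\phi$ and the standing restriction $\beta\in(0,2)$.
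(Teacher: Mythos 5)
Your overall architecture matches the paper's: compute $\Gamma_n(Y_s^{(n)})=n^d\Gamma_n\pi_s^{(n)}$, identify the limit of its expectation, then expand the square and use three- and four-point correlation bounds to get concentration. However, there is a genuine gap in the step where you identify the limit. You claim that Mosco convergence (Assumption~\ref{RemarkConseqInvariance}) yields the \emph{pointwise} convergences $\E_n[\eta_{r(n)s}(x/n)]\to\rho(s,u)$ and $\E_n[\eta_{r(n)s}(x/n)\,\eta_{r(n)s}(y/n)]\to\rho(s,u)\rho(s,v)$ as $x/n\to u$, $y/n\to v$, and then pass to the limit by a Riemann-sum argument. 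Mosco convergence does not deliver this: by Theorem~\ref{MKS} it gives strong convergence of the semigroups $S_n^{(k)}(t)$ only in the sense of Definition~\ref{opercon}, i.e.\ an $L^2$-type statement relative to the Hilbert-space convergence $H_n\to H$, not pointwise (or locally uniform) convergence of the kernels $p_{r(n)s}^{(k)}$. Pointwise convergence of the kernels is essentially a local limit theorem, which is neither assumed nor proved anywhere in the paper --- indeed the stated purpose of the Mosco framework is precisely to \emph{avoid} assuming an invariance principle for the one-particle dynamics. The same unsupported pointwise limits are reused in your concentration step, where you assert that the fully off-diagonal part of the quadruple sum "factorises into the product of two copies of the mean of the previous step and converges to $\ell_s^2$", so the gap propagates through both halves of the proof.

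The way the paper closes this hole is to never leave the Hilbert-space setting: it introduces the deterministic intermediate quantity $\caM_t^n(\phi)$, obtained from $\E_n[\Gamma_n(Y_t^{(n)})]$ by replacing the dual-particle expectations with discrete semigroups $S_n^{(1)}(t)$, $S_n^{(2)}(t)$ acting on (restrictions of) $\rho_0$; the replacement error is $O(1/n)$ by Assumption~\ref{HydroAssump}. The limit of $\caM_t^n(\phi)$ is then identified by pairing, inside $H_n$-inner products, the strongly convergent sequences $S_n^{(k)}(t)\Phi_n\rho_0$ (Theorem~\ref{Moscoconvkparticles} together with Theorem~\ref{MKS}) against the strongly convergent sequence $\Gamma_n^{(1)}\Phi_n\phi$ (Proposition~\ref{StrongConvLn1Gamman1}); convergence of inner products of strongly convergent sequences is exactly what the Kuwae--Shioya framework provides, and no pointwise kernel information is ever needed. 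For \eqref{VanishSecondQuad} the paper compares $\E_n[\Gamma_n(Y_t^{(n)})^2]$ with $\caM_t^n(\phi)\,\E_n[\Gamma_n(Y_t^{(n)})]$ term by term in the same fashion, using the factorization in the limit of $S_n^{(k)}(t)$ for $k\le 4$. If you rewrite your Riemann-sum and factorization steps as such pairings of strongly convergent sequences, your counting argument for the coincidence terms (which is correct in spirit and parallels the paper's) goes through.
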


\begin{proof} 
  First notice that by the definition of the Carré du champ operator, and the same computations as in the
  proofs of Propositions~\ref{Growthcontrolone} and~\ref{Growthcontroltwo}, we have \small
\begin{align}
&\E_n \left[   \Gamma_n (Y_t^{(n)}(\phi,\eta))      \right] \nonumber \\
&= \frac{\alpha^2}{n^{2d}}\sum_{x, y \in \Zd} q((y-x)/n) \,  \sum_{x\myprime \in \Zd} p_{r(n)t}(x/n, x\myprime/n) \int_\Omega D(\delta_{x\myprime/n},\eta) \nu_{\rho}(d\eta) \left( \phi(y/n) - \phi(x/n) \right)^2 \nn \\
&+\frac{\alpha^2}{n^{2d}} \sum_{x, y \in \Zd} q((y-x)/n) \,  \sum_{x\myprime, y\myprime \in \Zd} p_{r(n)t}^{(2)}(x/n,y/n ; x\myprime/n, y\myprime/n)\int_\Omega D(\delta_{x\myprime/n} +\delta_{y\myprime},\eta) \nu_{\rho}(d\eta)  \left( \phi(y/n) - \phi(x/n) \right)^2. \nn 
\end{align}
\normalsize
By Mosco convergence of one and two $\SIP$-particles, we also have that the quantity
\small
\begin{align}
\caM_t^n(\phi) &:= \frac{\alpha}{n^{2d}}\sum_{x, y \in \Zd} q((y-x)/n) \,  \sum_{x\myprime \in \Zd} p_{r(n)t}(x/n, x\myprime/n) \rho_0(x\myprime/n) \left( \phi(y/n) - \phi(x/n) \right)^2 \nn \\
&+\frac{1}{n^{2d}} \sum_{x, y \in \Zd} q((y-x)/n) \,  \sum_{x\myprime, y\myprime \in \Zd} p_{r(n)t}^{(2)}(x/n,y/n ; x\myprime/n, y\myprime/n) \rho_0(x\myprime/n) \rho_0(y\myprime/n)  \left( \phi(y/n) - \phi(x/n) \right)^2 \nn 
\end{align}
\normalsize
converges to 
\begin{equation}
  \norm{ \sqrt{\Gamma_s^{\rho}\phi}}^2_{L^2(\Rd, \rho(t,x) \, dx)}.
\end{equation} 
In order to show~\eqref{ExpecQuadLimit}, it is then enough to show
\begin{align}
  \label{VanishmeanAbsDiff}
\lim_{n \to \infty}   \left| \E_n \left[ \Gamma_n (Y_t^{(n)}(\phi,\eta)) \right]-\caM_t^n(\phi) \right|   = 0.
\end{align}
However, this is a consequence of Assumption~\ref{HydroAssump}, and Mosco convergence of one and two particles.

Now, to prove~\eqref{VanishSecondQuad} we show 
\begin{align}
  \lim_{n \to \infty} \E_n \left[  \left( \int_0^t \Gamma_n (Y_s^{(n)}(\phi,\eta)) -\caM_s^n(\phi) \, ds  \right)^2 \right] &= 0.
\end{align}
We then use the Cauchy-Schwarz inequality to obtain
\begin{align}
  \E_n &\left[ \left(  \int_0^t \Gamma_n (Y_s^{(n)}(\phi,\eta)) -\caM_s^n(\phi)\, ds \right)^2   \right]  &\leq t \, \E_n \left[ \int_0^t \left(   \Gamma_n (Y_s^{(n)}(\phi,\eta)) -\caM_s^n(\phi)    \right)^2 \, ds    \right].
\end{align}
It is then enough to show that the expectation of the integrand of the right-hand side
vanishes as $n$ goes to infinity. That is, we want to show that the following term vanishes: 
\begin{align}
  \label{VanishingCarre}
  &\E_n \left[ \left(   \Gamma_n (Y_t^{(n)}(\phi,\eta)) -\caM_t^n(\phi) \right)^2 \right] \nn \\  
  &= \E_n \left[ \left(   \Gamma_n (Y_t^{(n)}(\phi,\eta))  \right)^2 \right] + \left( \caM_t^n(\phi) \right)^2  -2 \caM_t^n(\phi) \E_n \left[  \Gamma_n (Y_t^{(n)}(\phi,\eta))   \right].
\end{align}
We split the term with the minus sign in halfs and first deal with
\begin{align}
  &\caM_t^n(\phi) \left( \caM_t^n(\phi)  -  \E_n \left[  \Gamma_n (Y_t^{(n)}(\phi,\eta))   \right]  \right)\nn 
  &\leq \caM_t^n(\phi) \left| \caM_t^n(\phi)  -  \E_n \left[  \Gamma_n (Y_t^{(n)}(\phi,\eta))   \right]  \right|,
\end{align}
which by \eqref{VanishmeanAbsDiff} vanishes as $n$ goes to infinity.

We are left to show that
\begin{equation}
  \label{ECarreSquareMinusMtimesExp} 
  \lim_{n \to \infty} \E_n \left[ \left(   \Gamma_n (Y_t^{(n)}(\phi,\eta))  \right)^2 \right]
  - \caM_t^n(\phi) \E_n \left[  \Gamma_n (Y_t^{(n)}(\phi,\eta))   \right] = 0.
\end{equation} 
Notice that
\small
\begin{align}
  \label{ExpCarreSquare}
  &\E_n \left[ \left(   \Gamma_n (Y_t^{(n)}(\phi,\eta))  \right)^2 \right] \nn \\
  &= \frac{\alpha^2}{n^{2d}} \sum_{x,w}  \E_{\nu_\rho^n} \left[ \eta_{r(n)t}(x/n) \eta_{r(n)t}(w/n) \right] \Gamma_n^{(1)}\phi(x/n) \,\Gamma_n^{(1)} \phi(w/n) \nn \\ 
  &+ \frac{1}{n^{4d}} \sum_{x,y,z,w} q(x-y) q(w-z)  \E_{\nu_\rho^n} \left[ \eta_{r(n)t}(x/n) \eta_{r(n)t}(y/n) \eta_{r(n)t}(w/n) \eta_{r(n)t}(z/n) \right] \left( \nabla_n^{(x,y)}\phi \right)^2  \, \left( \nabla_n^{(w,z)}\phi \right)^2 \nn \\ 
  &+ \frac{2 \alpha}{n^{3d}} \sum_{x,z,w} q(w-z)   \E_{\nu_\rho^n} \left[ \eta_{r(n)t}(x/n)  \eta_{r(n)t}(w/n) \eta_{r(n)t}(z/n) \right] \Gamma_n^{(1)}\phi(x/n) \,  \left( \nabla_n^{(w,z)}\phi \right)^2, 
\end{align} 
\normalsize
and 
\small 
\begin{align}
  \label{ExpCarretimesMn}
& \caM_t^n(\phi) \E_n \left[  \Gamma_n (Y_t^{(n)}(\phi,\eta))   \right] \nn \\
&= \frac{\alpha^2}{n^{2d}} \sum_{x,w} \,  \sum_{x\myprime \in \Zd} p_{r(n)t}(x/n, x\myprime/n) \rho_0(x\myprime/n) \E_{\nu_\rho^n} \left[ \eta_{r(n)t}(w/n) \right] \Gamma_n^{(1)}\phi(x/n) \,\Gamma_n^{(1)} \phi(w/n) \nn \\ 
&+ \frac{1}{n^{4d}} \sum_{x,y,z,w} q(x-y) q(w-z)  \sum_{x\myprime, y\myprime \in \Zd} p_{r(n)t}^{(2)}(x/n,y/n ; x\myprime/n, y\myprime/n) \rho_0(x\myprime/n) \rho_0(y\myprime/n) \nn \\
&\times \E_{\nu_\rho^n} \left[  \eta_{r(n)t}(w/n) \eta_{r(n)t}(z/n) \right] \left( \nabla_n^{(x,y)}\phi \right)^2  \, \left( \nabla_n^{(w,z)}\phi \right)^2 \nn \\ 
&+ \frac{ \alpha}{n^{3d}} \sum_{x,z,w} q(w-z)    \sum_{x\myprime \in \Zd} p_{r(n)t}(x/n, x\myprime/n) \rho_0(x\myprime/n) \E_{\nu_\rho^n} \left[  \eta_{r(n)t}(w/n) \eta_{r(n)t}(z/n) \right] \Gamma_n^{(1)}\phi(x/n) \,  \left( \nabla_n^{(w,z)}\phi \right)^2 \nn \\ 
&+ \frac{ \alpha}{n^{3d}} \sum_{x,z,w} q(w-z)   \E_{\nu_\rho^n} \left[ \eta_{r(n)t}(x/n)   \right] \sum_{w\myprime, z\myprime \in \Zd} p_{r(n)t}^{(2)}(w,z ; w\myprime, z\myprime) \rho_0(w\myprime/n) \rho_0(z\myprime/n)\Gamma_n^{(1)}\phi(x/n) \,  \left( \nabla_n^{(w,z)}\phi \right)^2, 
\end{align}
\normalsize where $\nabla_n^{(w,z)}\phi:= \phi(x/n)-\phi(z/n)$. To show~\eqref{ECarreSquareMinusMtimesExp}, it
is enough to compare one by one the quantities in~\eqref{ExpCarreSquare} minus the ones
in~\eqref{ExpCarretimesMn}. For example, we estimate first the difference of the first term in each of the
right hand side of~\eqref{ExpCarreSquare} and~\eqref{ExpCarretimesMn}, namely
\begin{align*}
\Xi_n^{(1)}(\eta,t) &:= \frac{\alpha^2}{n^{2d}} \sum_{x,w}  \E_{\nu_\rho^n} \left[ \eta_{r(n)t}(x/n) \eta_{r(n)t}(w/n) \right] \Gamma_n^{(1)}\phi(x/n) \,\Gamma_n^{(1)} \phi(w/n) \nn \\
&- \frac{\alpha^2}{n^{2d}} \sum_{x,w} \,  \sum_{x\myprime \in \Zd} p_{r(n)t}(x/n, x\myprime/n) \rho_0(x\myprime/n) \E_{\nu_\rho^n} \left[ \eta_{r(n)t}(w/n) \right] \Gamma_n^{(1)}\phi(x/n) \,\Gamma_n^{(1)} \phi(w/n) \nn \\
&= \frac{\alpha^2}{n^{2d}} \sum_{x,w} S_n^{(2)}(t) \left( \rho_0(x/n) \rho_0(w/n)\right) \Gamma_n^{(1)}\phi(x/n) \,\Gamma_n^{(1)} \phi(w/n) \nn \\
&-\frac{\alpha^2}{n^{2d}} \sum_{x,w} S_n^{(1)}(t) \left( \rho_0(x/n) \right)  S_n^{(1)}(t) \left(\rho_0(w/n)\right) \Gamma_n^{(1)}\phi(x/n) \,\Gamma_n^{(1)} \phi(w/n) + O(\tfrac1 n).
\end{align*}
where in the last equality we used Assumption \ref{HydroAssump} twice. Notice that by
Proposition~\ref{StrongConvLn1Gamman1}, $\Gamma_n^{(1)}\phi$ converges strongly to
$\Gamma^{(1)}\phi$. Moreover, the semigroup $S_n^{(2)}(t)$ converges strongly to the semigroup corresponding
to two independent copies of the semigroup associated with the generator $L^{(1)}$. Hence 
\begin{equation*}
    \lim_{n \to \infty } \Xi_n^{(1)}(\eta,t) = 0.
\end{equation*}
The remaining pairs of  
terms in the right-hand sides of~\eqref{ExpCarreSquare} and~\eqref{ExpCarretimesMn} are analyzed analogously,
using Proposition~\ref{StrongConvLn1Gamman1}, Assumption~\ref{HydroAssump}, and the factorization in the limit
of the semigroups $S_n^{(k)}(t)$ for $k \in \lbrace 2, 3, 4\rbrace$.
\end{proof}

\subsubsection{Tightness: fluctuations}
We show tightness using Mitoma's criterion. 
This criterion allows us to simplify the situation to the real-valued scenario. Moreover, thanks to
Proposition~\ref{LemmaDrift}, it is enough to show tightness for each of the three processes
\begin{equation*}
  \left\{\, Y_0^{(n)}(\phi,\eta) \;\middle|\; n \in \N \, \right\}, \quad
  \left\{\, N_t^{(n)}(\phi,\eta) \;\middle|\; t \in [0,T], n \in \N \, \right\}, 
\end{equation*}
and
\begin{equation*}
  \left\{\, \int_0^t Y_s^{(n)}(L_n^{(1)} \phi,\eta) \, ds \;\middle|\; t \in [0,T], n \in \N \, \right\} .
\end{equation*}
We will work separately on each process.

\subsubsection*{Tightness for the fluctuation field at time zero}

We consider the first process. Here it is just enough to notice that by the same computations as those
in~\eqref{twopointscorr},
\begin{align}
\E_{\nu_\rho^n} \left[ Y_0^{(n)}(\phi, \eta)^2 \right] &= \frac{1}{n^d} \sum_{x,y \in \Zd} \left( \E_{\nu_\rho^n} \left[ \eta(x/n) \eta(y/n) \right] -\rho_0^n(x/n) \rho_0^n(y/n) \right)  \phi(x/n) \, \phi(y/n) \nn \\
&= \frac{1}{n^{d}} \sum_{x \in \Zd} \left( \int_\Omega D(2 \delta_{x/n} ,\eta) \, \nu_{\rho_0}^n(d\eta) -\rho_0^n(x)^2 \right)  \phi(x/n)^2  \nn \\
&{}\quad+\frac{\alpha (\alpha+1)}{n^d} \sum_{x \in \Zd}  \int_\Omega D(2\delta_{x/n},\eta) \, \nu_{\rho_0}^n(d\eta)  \phi(x/n)^2  \nn \\
&{}\quad+ \frac{\alpha}{n^d} \sum_{x \in \Zd}  \int_\Omega D(\delta_{x/n},\eta) \, \nu_{\rho_0}^n(d\eta) \phi(x/n)^2,
\end{align}
which by Assumption~\ref{HydroAssump} is bounded.

\subsubsection*{Tightness of Martingales}
For the second process, we follow the proof of~\cite[Lemma 5.2]{erhard2020non}, which shows the
convergence of a similar martingale to a mean zero Gaussian process of quadratic variation of time-integral
form. From the same arguments in~\cite{erhard2020non} it is enough to check that the following holds.
\begin{enumerate}
\item Jumps size:
  \begin{equation}
    \label{JumpsControlQVNtn} 
    \lim_{n \to \infty} \E_n \left[ \sup_{s \leq t} \left|    N_s^{(n)}(\phi,\eta)- N_{-s}^{(n)}(\phi,\eta) \right| \right] = 0.
  \end{equation} 
\item Convergence in probability of the quadratic variation process:
  \begin{equation}
    \label{ConvQVNtProb} 
    \lim_{n \to \infty} \P_n \left( \left| \int_0^t \Gamma_n (Y_s^{(n)}(\phi,\eta)) \, ds -  \int_0^t \norm{ \sqrt{\Gamma_s^{\rho}f}}^2_{L^2(\Rd, \rho(s,x) \, dx)} \, ds  \right| > \epsilon  \right) = 0,
  \end{equation} 
  where $\rho(t,x)$ solves \eqref{HydroEquationLJ}.
\end{enumerate}
To see that~\eqref{JumpsControlQVNtn} holds, it is enough to notice that the jumps of the process
$\{ \eta(t): t \geq 0 \}$ are determined by exponential clocks. This implies that, for any $\delta>0$, the
probability of having more than one jump in the interval $(s,s+\delta]$ is of the order $o(\delta)$. Hence for
$C$, a positive constant that depends on the model parameters, we have \be \sup_{s \leq t} \left|
  N_s^{(n)}(\phi,\eta)- N_{-s}^{(n)}(\phi,\eta) \right| = \sup_{s \leq t} \left| Y_s^{(n)}(\phi,\eta)-
  Y_{-s}^{(n)}(\phi,\eta) \right| \leq C \frac{\norm{\phi}_{\infty}}{n^{d/2}},
\end{equation}
with probability $1-o(\delta)$.

For the convergence of the quadratic variation process, we obtain convergence in distribution by
Proposition~\ref{ConvQuadraVarFluc}. The fact that the limit is deterministic upgrades this to convergence in
probability.

\subsubsection*{Tightness of integral term}
For the third term, the drift term, we use a combination of Theorem 1.3 in \cite[Chapter IV]{kipnis1998scaling} and Aldous’ criterion (in the form given in Proposition 1.6 of \cite[Chapter IV]{kipnis1998scaling}). In order to verify part 1) of Theorem 1.3 of \cite{kipnis1998scaling}, it is enough to estimate 
\begin{align}
    \E_n &\left[ \sup_{t \in [0,T] }  \left( \int_0^t Y_s^{(n)}(L_n^{(1)} \phi,\eta) \, ds \)^ 2  \right] \leq T \int_0^T \E_n \left[  \left(  Y_s^{(n)}(L_n^{(1)} \phi,\eta) \)^2  \right] \, ds \nn \\
    &= T \int_0^T  \frac{1}{n^d} \sum_{x,y \in \Zd} \left( \E_{\nu_\rho^n} \left[ \eta_{r(n)t}(x/n) \eta_{r(n)t}(y/n) \right] -\rho_t^n(x/n) \rho_t^n(y/n) \right) L_n^{(1)}\phi(x/n) \,L_n^{(1)} \phi(y/n) \, dt.
\end{align}
By the exact computations as in \eqref{twopointscorr} we obtain
\begin{align}
    \E_n &\left[ \sup_{t \in [0,T] }  \left( \int_0^t Y_s^{(n)}(L_n^{(1)} \phi,\eta) \, ds \)^ 2  \right] \leq T \int_0^T \E_n \left[  \left(  Y_s^{(n)}(L_n^{(1)} \phi,\eta) \)^2  \right] \, ds \nn \\
    &= T \int_0^T  \frac{\alpha (\alpha+1)}{n^d} \sum_{x \in \Zd} \sum_{x\myprime, y\myprime \in \Zd} p_{r(n)t}^{(2)}(x/n,x/n; x\myprime/n, y\myprime/n) \int_\Omega D(\delta_{x\myprime/n}+\delta_{y\myprime/n},\eta) \, \nu_{\rho_0}^n(d\eta)  L_n^{(1)}\phi(x/n)^2 \, dt \nn \\
    &+ T \int_0^T  \frac{\alpha}{n^d} \sum_{x \in \Zd} \sum_{x\myprime \in \Zd} p_{r(n)t}^{(1)}(x/n,x\myprime/n) \int_\Omega D(\delta_{x\myprime/n},\eta) \, \nu_{\rho_0}^n(d\eta)  L_n^{(1)}\phi(x/n)^2  \, dt \nn \\
    &\leq T \int_0^T  \frac{(\alpha+1)}{\alpha n^d} \sum_{x \in \Zd} \sum_{x\myprime, y\myprime \in \Zd} p_{r(n)t}^{(2)}(x/n,x/n; x\myprime/n, y\myprime/n) \rho_0(x\myprime/n) \rho_0(y\myprime/n)  L_n^{(1)}\phi(x/n)^2 \, dt \nn \\
    &+ T \int_0^T  \frac{1}{n^d} \sum_{x \in \Zd} \sum_{x\myprime \in \Zd} p_{r(n)t}^{(1)}(x/n,x\myprime/n) \rho_0(x\myprime/n) L_n^{(1)}\phi(x/n)^2  \, dt \nn \\
    &+ T \int_0^T  \frac{ C_1 (\alpha+1)}{\alpha n^{2d}} \sum_{x \in \Zd}   L_n^{(1)}\phi(x/n)^2 \, dt + T \int_0^T  \frac{C_1}{n^{2d}} \sum_{x \in \Zd} L_n^{(1)}\phi(x/n)^2  \, dt, 
\end{align}
where in the last inequality we applied Assumption \ref{HydroAssump} in the same fashion than in the proof of Proposition \ref{Growthcontroltwo}. Notice that by Mosco convergence of two and one particles the last expression is bounded, and hence
 \begin{equation}\label{FirstAldous} 
     \lim_{A \to \infty} \limsup_{n \to \infty} \P_{\mu_n}\left(\sup_{t \in [0,T]} \left|  x_t^{(n)} \right| > A \right) =0.
     \end{equation} 

\noindent
Proposition 1.6 of \cite{kipnis1998scaling} is shown analogously obtaining the estimate
\begin{align}
    \E_n &\left[  \left( \int_\tau^{\tau+\lambda} Y_s^{(n)}(L_n^{(1)} \phi,\eta) \, ds \)^ 2  \right] \nn \\
    &\leq \delta \int_\tau^{\tau+\lambda} \E_n \left[  \left(  Y_s^{(n)}(L_n^{(1)} \phi,\eta) \)^2  \right] \, ds \nn \\
    &\leq \delta \int_\tau^{\tau+\lambda}  \frac{\alpha (\alpha+1)}{n^d} \sum_{x \in \Zd} \sum_{x\myprime, y\myprime \in \Zd} p_{r(n)t}^{(2)}(x/n,x/n; x\myprime/n, y\myprime/n) \rho_0(x\myprime/n) \rho_0(y\myprime/n)  L_n^{(1)}\phi(x/n)^2 \, dt \nn \\
    &+ \delta \int_\tau^{\tau+\lambda}  \frac{\alpha}{n^d} \sum_{x \in \Zd} \sum_{x\myprime \in \Zd} p_{r(n)t}^{(1)}(x/n,x\myprime/n) \rho_0(x\myprime/n) L_n^{(1)}\phi(x/n)^2  \, dt \nn \\
    &+ \delta \int_\tau^{\tau+\lambda}  \frac{\alpha (\alpha+1)}{n^{2d}} \sum_{x \in \Zd}   L_n^{(1)}\phi(x/n)^2 \, dt + T \int_0^T  \frac{\alpha}{n^{2d}} \sum_{x \in \Zd} L_n^{(1)}\phi(x/n)^2 \, dt.  
\end{align}
This finishes the proof of the tightness for the integral term.

\appendix
\section{Appendix}

\subsection{A class of jump processes on $\Rd$}
Here we provide two properties of the process $X_t$ with state space $\Rd$ associated with the generator
$L$. Namely, the regularity of its Dirichlet form and the ultracontractivity of its semi-group. The first
property allows us to talk about convergence of Dirichlet forms, while the second one is a technical
requirement to show uniqueness of solutions to the limiting SPDE governing the fluctuations.

\subsubsection*{Regularity of Dirichlet form}
The process $X_t$ with generator $L$ given by~\eqref{DefL} can also be studied in terms of its Dirichlet form
$(\caE, D(\caE))$. This form is given by
\begin{align}
  \label{DefDirFormMacro01}
  \caE(f) &= \int_{\Rd} \int_{\Rd} q(y) (f(x+y)-f(x))^2 dy \, dx, \nn 
\end{align} 
with
\begin{equation*}
  D(\caE)= \left\{ \, f \in L^2(\R^{d}, dx) \;\middle|\; \caE(f) < \infty  \, \right\}.
\end{equation*} 

\begin{remark}
  \label{RemDirFormOne}
  It is known, see for example~\cite{fukushima2010dirichlet,schilling2012structure}, that if the functions
  $\Psi_1 \colon \R^{d} \to \R$ and $\Psi_2 \colon \R^{d} \to \R$, given by
  \begin{equation}
    \Psi_1(x) := \int_{ 0< |x-y|\leq 1 } \left|x-y \right| \, q(y) \, dy  \nn 
  \end{equation} 
  and
  \begin{equation}
    \Psi_2(x) := \int_{ |x-y|>1 }  q(y) \, dy,  \nn 
  \end{equation} 
  are elements of $L_{\text{loc}}^1(\Rd)$, then $(\caE,C_k^\infty(\R^{d}))$ is a closable Markovian symmetric
  form on $L^2(\Rd)$, and the closure $(\caE,\caF)$ is a regular symmetric Dirichlet form. Notice that this is
  in particular true for the functions $q$ considered in this paper.
\end{remark}

\subsubsection*{Ultracontractivity}
It is known (see~\cite{coulhon1996ultracontractivity},~\cite{barlow2009heat}, and~\cite{hu2006nash}) that
under Assumption \ref{AssumpNashIneq} the semigroup $S_t^L$, associated to $\caE$, satisfies
\begin{equation}\label{Ultracontractivitity} 
\norm{S_t^L f}_\infty \leq C \norm{f}_1,
\end{equation} 
for all $f \in L^1(\Rd)$.

\subsection{Mosco convergence of $\SIP$ particles}
\label{sec:app-Mosco}
In this section we state some consequences of Assumption~\ref{RemarkConseqInvariance} in the context of
Dirichlet forms and Mosco convergence. In particular, using the results of~\cite{ayala2024mosco}, we show
the convergence of the Dirichlet form of $k$-$\SIP$ particles to the Dirichlet form associated to the process
given by $k$-independent copies of the process with generator $L$.

\subsubsection{Convergence of Hilbert spaces}
\label{sec:app-Hilbert}
The convergence of Dirichlet forms we are interested in takes place in the setting of convergence of a
sequence of Hilbert spaces. We recall this notion of convergence introduced in~\cite{kuwae2003convergence}.

\begin{definition}[Convergence of Hilbert spaces]
  \label{HilConv}
  A sequence of Hilbert spaces $\{ H_n \}_{n \geq 0}$ \emph{converges} to a Hilbert space $H$ if a dense
  subset $\Cfrak \subseteq H$ exists and a family of linear maps
  $\left\{ \Phi_n \colon \Cfrak \to H_n \right\}_n$ such that
  \begin{equation}
    \label{HilCond}
    \lim_{n \to \infty} \| \Phi_n f \|_{H_n} = \| f \|_{H}, \qquad \text{  for all } f \in \Cfrak. \nn 
  \end{equation}
\end{definition}

It is also necessary to introduce the concepts of strong and weak convergence of vectors defined on a
convergent sequence of Hilbert spaces. In 
Definitions~\ref{strongcon}, \ref{weakcon} and~\ref{MoscoDef} we assume that the spaces $\{ H_n \}_{n \geq 0}$
converge to the space $H$, in the sense just defined, with the dense set $\Cfrak \subset H$ and the sequence
of operators $\{ \Phi_n \colon \Cfrak \to H_n \}_n$ witnessing the convergence.

\begin{definition}[Strong convergence on Hilbert spaces]
  \label{strongcon}
  A sequence of vectors $\{ f_n \}$ with $f_n$ in $H_n$, is said to \emph{strongly converge} to a vector
  $f \in H$ if there exists a sequence $\{ \tilde{f}_M \} \in \Cfrak$ such that \be \lim_{M\to \infty} \|
  \tilde{f}_M -f \|_{H} = 0 \nn
\end{equation}
and
\begin{equation}
  \lim_{M \to \infty} \limsup_{n \to \infty} \| \Phi_n \tilde{f}_M -f_n \|_{H_n} = 0. \nn 
\end{equation}
\end{definition}

\begin{definition}[Weak convergence on Hilbert spaces]\label{weakcon}
  A sequence of vectors $\{ f_n \}$ with $f_n \in H_n$, is said to \emph{weakly converge} to a vector $f$ in a
  Hilbert space $H$ if
  \begin{equation}
    \lim_{n\to \infty} \left \langle f_n, g_n \right \rangle_{H_n} = \ \left \langle f,
      g \right \rangle_{H}, \nn
  \end{equation}
  for every sequence $\{g_n \}$ which converges strongly to $g \in H$.
\end{definition}

\begin{remark}
  Notice that, as expected, strong convergence implies weak convergence, and, for any $f \in \Cfrak$, the
  sequence $\Phi_n f $ strongly-converges to $f$.
\end{remark}

\noindent
Given these notions of convergence, we can also introduce related notions of convergence for operators.
We denote by $L(H)$ the set of all bounded linear operators in $H$.

\begin{definition}[Convergence of bounded operators on Hilbert spaces]
  \label{opercon}
  A sequence of bounded operators $\{ T_n \}$ with $T_n \in L(H_n)$  \emph{converges strongly
      (resp. weakly)} to an operator $T$ in $L(H)$ if for every strongly (resp. weakly) convergent sequence
  $\{ f_n \}$ with $f_n \in H_n$ and limit $f \in H$, the sequence $\{ T_n f_n \}$ converges
  strongly (resp. weakly) to $T f$.
\end{definition}

\noindent
We are now ready to introduce Mosco convergence.

\subsubsection{Definition of Mosco convergence}
In this section we assume the Hilbert convergence of a sequence of Hilbert spaces $\{ H_n \}_n$ to $H$.

\begin{definition}[Mosco convergence]
  \label{MoscoDef}
  A sequence of Dirichlet forms $\{ (\caE_n, D(\caE_n))\}_n $, defined on Hilbert spaces $H_n$, \emph{Mosco
    converges} to a Dirichlet form $(\caE, D(\caE)) $, defined on some Hilbert space $H$, if:
  \begin{description}
  \item[Mosco I.] For every sequence of $f_n \in H_n$ weakly converging  to $f$ in $H$, one has
    \begin{equation}\label{mosco1}
      \caE ( f ) \leq \liminf_{n \to \infty} \caE_n ( f_n ).
    \end{equation}
  \item[Mosco II.] For every $f \in H$, there exists a sequence $ f_n \in H_n$ strongly converging to $f$ in
    $H$, such that
    \begin{equation}\label{mosco2}
      \caE ( f) = \lim_{n \to \infty} \caE_n ( f_n ).
    \end{equation}
  \end{description}
\end{definition}

The Markovian properties of the Dirichlet form correspond to the properties of the associated semigroups and
resolvents. The following theorem from~\cite{kuwae2003convergence}, which relates Mosco convergence with
convergence of semigroups and resolvents, is a powerful application of this correspondence and one of the main
ingredients of our work.

\begin{theorem}
  \label{MKS}
  Let $\{ (\caE_n, D(\caE_n))\}_n $ be a sequence of Dirichlet forms on Hilbert spaces $H_n$ and let
  $(\caE, D(\caE)) $ be a Dirichlet form in some Hilbert space $H$. The following statements are equivalent:
  \ben
\item $\{ (\caE_n, D(\caE_n))\}_n $ Mosco-converges to $\{ (\caE, D(\caE))\} $.
\item The associated sequence of semigroups $\{ S_{n} (t) \}_n $ strongly converges to the semigroup $ S(t)$
  for every $t >0$.  \een
\end{theorem}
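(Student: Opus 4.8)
The plan is to route the equivalence through an intermediate condition, namely the \emph{strong convergence of the resolvents}: for some (equivalently every) $\lambda>0$ and every sequence $g_n\in H_n$ converging strongly to $g\in H$, the sequence $G_n^\lambda g_n$ converges strongly to $G^\lambda g$, where $G_n^\lambda=(\lambda+A_n)^{-1}$ and $G^\lambda=(\lambda+A)^{-1}$ are the resolvents of the non-negative self-adjoint generators $A_n,A$ attached to the forms (so that $\caE_n(f)=\norm{A_n^{1/2}f}_{H_n}^2$, $S_n(t)=e^{-tA_n}$, and similarly for the limit). I would prove that (1) is equivalent to strong resolvent convergence, and that strong resolvent convergence is equivalent to (2); together these give the theorem.

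For (1)$\Rightarrow$resolvent convergence I would exploit the variational characterization of the resolvent: $u_n=G_n^\lambda g_n$ is the unique minimizer of $\Phi_n(f)=\caE_n(f)+\lambda\norm{f}_{H_n}^2-2\langle g_n,f\rangle_{H_n}$. The inequality $\Phi_n(u_n)\le\Phi_n(0)=0$ yields uniform bounds on $\norm{u_n}_{H_n}$ and on $\caE_n(u_n)$, so along a subsequence $u_n$ converges weakly to some $w$. Testing the minimality of $u_n$ against a recovery sequence produced by Mosco~II \eqref{mosco2} and passing to the limit using Mosco~I \eqref{mosco1} for the energy, weak lower semicontinuity of the norm, and the strong--weak pairing $\langle g_n,u_n\rangle_{H_n}\to\langle g,w\rangle_H$, identifies $w$ as the minimizer of the limiting functional, i.e. $w=G^\lambda g$; uniqueness removes the subsequence. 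Choosing the test function to be the recovery sequence for $w$ itself forces convergence of norms and energies, and weak convergence together with convergence of norms upgrades to strong convergence.

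For resolvent convergence$\Rightarrow$(1) I would use the Yosida--Moreau approximations of the forms,
\[
  \caE_n^{(\beta)}(f)=\beta\langle(I-\beta G_n^\beta)f,f\rangle_{H_n}=\inf_{g\in H_n}\bigl\{\caE_n(g)+\beta\norm{f-g}_{H_n}^2\bigr\},
\]
which increase to $\caE_n$ as $\beta\to\infty$ and are dominated by it. Writing $\caE_n^{(\beta)}(f_n)=\norm{T_n^\beta f_n}_{H_n}^2$ with $T_n^\beta=\sqrt{\beta(I-\beta G_n^\beta)}$, strong convergence of $G_n^\beta$ gives strong convergence of the bounded self-adjoint operators $T_n^\beta$ (continuity of the square root in the strong operator topology on uniformly bounded non-negative operators); since strongly convergent self-adjoint operators map weakly convergent sequences to weakly convergent ones, $f_n\rightharpoonup f$ implies $T_n^\beta f_n\rightharpoonup T^\beta f$, and lower semicontinuity of the norm gives $\caE^{(\beta)}(f)\le\liminf_n\caE_n^{(\beta)}(f_n)\le\liminf_n\caE_n(f_n)$; letting $\beta\to\infty$ and using $\caE^{(\beta)}\nearrow\caE$ yields Mosco~I. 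For Mosco~II, the identity $\caE_n(G_n^\beta h_n)=\langle h_n,G_n^\beta h_n\rangle_{H_n}-\beta\norm{G_n^\beta h_n}_{H_n}^2$ shows that the resolvent approximations $f_n^\beta:=\beta G_n^\beta(\Phi_n f)$ converge strongly to $\beta G^\beta f$ with $\caE_n(f_n^\beta)\to\caE(\beta G^\beta f)$; since $\beta G^\beta f\to f$ and $\caE(\beta G^\beta f)\to\caE(f)$ as $\beta\to\infty$, a diagonal argument produces the required recovery sequence.

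Finally, for the equivalence with (2): the direction (2)$\Rightarrow$resolvent convergence is immediate from the Laplace representation $G_n^\lambda=\int_0^\infty e^{-\lambda t}S_n(t)\,dt$ and dominated convergence, using the contractivity $\norm{S_n(t)}\le 1$. The converse, resolvent convergence$\Rightarrow$(2), is a Trotter--Kato argument in the varying-space setting: the bounded Yosida generators $A_{n,\beta}=\beta(I-\beta G_n^\beta)$ converge strongly to $A_\beta$, so the norm-convergent exponential series gives $e^{-tA_{n,\beta}}\to e^{-tA_\beta}$ strongly for each fixed $\beta$, and a uniform-in-$n$ contraction estimate lets one interchange the limits $\beta\to\infty$ and $n\to\infty$ via a $3\epsilon$/diagonal argument to conclude $S_n(t)\to S(t)$ strongly. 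I expect this last step --- controlling the double limit uniformly while carrying the maps $\Phi_n$ through the functional calculus --- to be the main technical obstacle, together with the strong continuity of the square-root and exponential functional calculi across the varying Hilbert spaces, which is exactly where the framework of~\cite{kuwae2003convergence} does the essential work.
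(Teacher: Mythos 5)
The paper does not prove this statement at all: Theorem~\ref{MKS} is imported verbatim from the cited reference \cite{kuwae2003convergence} (Kuwae--Shioya), so there is no internal proof to compare against. Your proposal reconstructs what is essentially the proof in that reference --- Mosco's original equivalence (Mosco convergence $\Leftrightarrow$ strong resolvent convergence $\Leftrightarrow$ strong semigroup convergence) transplanted to the varying-Hilbert-space setting --- and the outline is correct: the variational characterization of $G_n^\lambda$ plus recovery sequences for one direction, the Moreau--Yosida approximations $\caE_n^{(\beta)}$ for the converse, and Laplace transform / Trotter--Kato-type arguments for the resolvent--semigroup equivalence are exactly the ingredients used there. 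Two minor points to tidy up if you were to write this in full: (i) $\Phi_n f$ is only defined for $f$ in the dense set $\Cfrak$, so for general $f \in H$ (or $h \in H$) you should replace $\beta G_n^\beta(\Phi_n f)$ by $\beta G_n^\beta h_n$ for an arbitrary strongly convergent sequence $h_n \to f$, which exists by density; (ii) the ``dominated convergence'' step for the Laplace representation needs to be run on the scalar pairings $\langle S_n(t) g_n, h_n\rangle_{H_n}$ (for weak convergence) and on $\langle S_n(t+s) g_n, g_n\rangle_{H_n}$ (for convergence of norms), since Bochner integrals across varying spaces are not defined directly; both fixes are routine, and the genuinely technical items you flag (functional calculus through the maps $\Phi_n$, the uniform-in-$n$ interchange of limits) are precisely what the framework of \cite{kuwae2003convergence} supplies.
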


\subsubsection{Some comments on Mosco convergence and Assumption~\ref{RemarkConseqInvariance}}
\label{MoscoConvandAssumptionInvariance}
In this section we explain in more detail the content of Assumption~\ref{RemarkConseqInvariance}. From the
definitions of the previous sections we can see that Assumption~\ref{RemarkConseqInvariance}
ensures the following statements:
\begin{itemize}
\item Convergence of Hilbert spaces with a particular projection operator $\Phi_n$ given by the restriction to
  $\tfrac{1}{n}\Zd$
\item  That condition Mosco I is satisfied for any weakly convergent sequence.
\end{itemize}

The first item is easy to prove. We refer to~\cite{ayala2021condensation} for a proof of this fact which does
not depend on the size of the jumps of the underlying process.
The second point is rather restrictive, but we believe that can be shown along the same lines as in \cite{ayala2021condensation} for independent particles in the finite range situation. Namely, it is enough to have that for all $g \in H$, there exists a sequence $ g_n \in H_n $  strongly-converging to $g$ such that
\begin{equation}\label{dualcondual}
\lim_{n\to \infty}\caE^*_n(g_n)= \caE^*(g),
\ee
where $*$ denotes the conjugate dual.\\

Besides $\Phi_n \phi$ converging strongly to $\phi$ we also have the following convergence result consequence
of Assumption~\ref{RemarkConseqInvariance}.

\begin{proposition}
  \label{StrongConvLn1Gamman1}
  For each $\phi \in C_k^\infty(\R^{d})$, the following strong converge 
  with respect to $\SIP$-Hilbert convergence:
  \begin{enumerate}
  \item The sequence $\lbrace L_n^{(1)} \Phi_n \phi : n \in \N \rbrace$ converges strongly to the function
    $L \phi$.
  \item The sequence $\lbrace \Gamma_n^{(1)} \Phi_n \phi : n \in \N \rbrace$ converges strongly to the
    function $\Gamma \phi$.
  \end{enumerate}
\end{proposition}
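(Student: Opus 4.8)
The plan is to obtain part~(2) as a consequence of part~(1), so that essentially all of the work goes into the strong convergence $L_n^{(1)}\Phi_n\phi \to L\phi$. To this end I recall that $\Gamma_n^{(1)}$ is the Carré du champ of the one-particle generator, so that $\Gamma_n^{(1)}f = L_n^{(1)}(f^2) - 2f\,L_n^{(1)}f$, and analogously $\Gamma\phi = L(\phi^2) - 2\phi\,L\phi$; both identities are immediate from the explicit forms $L_n^{(1)}f(x/n)=r(n)\sum_y q(y-x)(f(y/n)-f(x/n))$ and~\eqref{DefL}, and one checks directly that $L(\phi^2)-2\phi L\phi=\int q(y-x)(\phi(y)-\phi(x))^2\,dy=\Gamma\phi$. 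Since $\phi\in C_k^\infty(\Rd)$ implies $\phi^2\in C_k^\infty(\Rd)$, part~(1) applied to $\phi$ and to $\phi^2$ gives strong convergence of both $L_n^{(1)}\Phi_n\phi$ and $L_n^{(1)}\Phi_n(\phi^2)$; combining these through the Carré du champ identity then settles part~(2), once I record that strong convergence is stable under linear combinations and under multiplication by the restriction $\Phi_n\psi$ of a fixed $\psi\in C_k^\infty(\Rd)$. The latter stability holds because $\Phi_n$ is multiplicative and $\norm{\Phi_n\psi\,(\Phi_n g_M-h_n)}_{H_n}\le\norm{\psi}_\infty\,\norm{\Phi_n g_M-h_n}_{H_n}$, so an approximating sequence $g_M$ for a limit $h$ produces the approximating sequence $\psi g_M\in C_k^\infty(\Rd)$ for $\psi h$ required by Definition~\ref{strongcon}.

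For part~(1) I would first record two facts about $L\phi$. Because $q$ has unit mass, Young's inequality gives $\norm{K_q\phi}_{L^2}=\norm{q\star\phi}_{L^2}\le\norm{q}_{L^1}\norm{\phi}_{L^2}=\norm{\phi}_{L^2}$, whence $L\phi=(K_q-\Id)\phi\in L^2(\Rd)=H$; moreover $L\phi$ is continuous, and since $\phi$ has compact support while $q(z)\sim|z|^{-(d+\beta)}$, the function $L\phi$ inherits the tail bound $|L\phi(x)|\lesssim(1+|x|)^{-(d+\beta)}$. The essential subtlety — the point where the long-range nature of the model intervenes — is that $L\phi$ is \emph{not} compactly supported, so, in contrast with the finite-range case, it does not belong to the dense set $\Cfrak=C_k^\infty(\Rd)$; I therefore cannot directly invoke the remark that $\Phi_n f$ converges strongly to $f$ for $f\in\Cfrak$, and must argue through Definition~\ref{strongcon}.

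The proof of part~(1) then proceeds in two steps. First I would compare $L_n^{(1)}\Phi_n\phi$ with the lattice restriction of $L\phi$: writing $a_x:=L_n^{(1)}\phi(x/n)-L\phi(x/n)$ and using the interpolation $\tfrac1{n^d}\sum_x a_x^2\le\big(\sup_x|a_x|\big)\,\tfrac1{n^d}\sum_x|a_x|$, assumptions~\eqref{ApproxL1} and~\eqref{ApproxL01} force $\norm{L_n^{(1)}\Phi_n\phi-L\phi(\cdot/n)}_{H_n}^2\to0$. Second, I would choose $\tilde f_M\in C_k^\infty(\Rd)=\Cfrak$ with $\norm{\tilde f_M-L\phi}_H\to0$ (density of $C_k^\infty$ in $L^2$) and use $\norm{\Phi_n\tilde f_M-L_n^{(1)}\Phi_n\phi}_{H_n}\le\norm{\Phi_n\tilde f_M-L\phi(\cdot/n)}_{H_n}+\norm{L\phi(\cdot/n)-L_n^{(1)}\Phi_n\phi}_{H_n}$: the second summand vanishes as $n\to\infty$ by the first step, while the first summand is the Riemann sum $\tfrac1{n^d}\sum_x|\tilde f_M(x/n)-L\phi(x/n)|^2$, which converges to $\norm{\tilde f_M-L\phi}_H^2$. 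Taking $\limsup_n$ and then $M\to\infty$ verifies the two requirements of Definition~\ref{strongcon}, proving part~(1).

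The main obstacle is precisely the non-compact support of $L\phi$: it forces the Riemann-sum limit $\tfrac1{n^d}\sum_x h(x/n)\to\int_{\Rd}h$ with $h=|\tilde f_M-L\phi|^2$ to be justified over the \emph{unbounded} lattice, which I would control uniformly in $n$ by splitting $\Rd$ into a large ball and its complement and estimating the tail via the decay $|L\phi(x)|\lesssim(1+|x|)^{-(d+\beta)}$, the ball contribution being a standard Riemann sum for a continuous integrand. Everything else — the interpolation bound and the algebra of the Carré du champ reduction — is routine.
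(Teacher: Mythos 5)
Your proof is correct, but it takes a genuinely different route from the paper's. The paper never invokes the approximation hypotheses \eqref{ApproxL01}--\eqref{ApproxL1}: instead it exploits the convolution structure directly, approximating the kernel $q$ in $L^2$ by smooth compactly supported $q_M$, setting $\tilde f_M := \phi * q_M - \phi$ as the approximating sequence required by Definition~\ref{strongcon}, and then verifying both conditions by explicit Riemann-sum computations (splitting $L_n^{(1)}\Phi_n\phi$ into a discrete convolution part $R_n$ and a discrete mass part $S_n$, the latter controlled by \eqref{massoneforq}). You instead use \eqref{ApproxL1} and \eqref{ApproxL01} together with the interpolation bound $\tfrac1{n^d}\sum_x a_x^2\le(\sup_x|a_x|)\tfrac1{n^d}\sum_x|a_x|$ to show that $L_n^{(1)}\Phi_n\phi$ is $H_n$-close to the lattice restriction of $L\phi$, and then pick $\tilde f_M$ by abstract density of $C_k^\infty(\Rd)$ in $L^2(\Rd)$, paying for the non-compact support of $L\phi$ with the tail estimate $|L\phi(x)|\lesssim(1+|x|)^{-(d+\beta)}$ in the Riemann-sum step. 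Your route is more modular and arguably more faithful to the proposition's framing as a ``consequence of Assumption~\ref{RemarkConseqInvariance}'' (it would survive replacing the convolution operator by anything satisfying \eqref{ApproxL01}--\eqref{ApproxL1} with comparable decay), whereas the paper's route is self-contained in that it needs only mass one and square-integrability of $q$, at the price of kernel-specific computations. A second genuine difference: for part (2) the paper merely declares the argument ``analogous,'' while you reduce it algebraically to part (1) via the Carr\'e du champ identity $\Gamma_n^{(1)}f=L_n^{(1)}(f^2)-2f\,L_n^{(1)}f$, the multiplicativity of the restriction $\Phi_n$, and the stability of strong convergence under sums and under multiplication by $\Phi_n\psi$ for fixed $\psi\in C_k^\infty(\Rd)$; this is slicker than repeating the computation and is a genuine improvement in completeness over the paper's one-line dismissal.
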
 

\begin{proof} 
  We only show the first statement. The second one is analogous. Let $\phi \in C_k^\infty(\Rd)$. To show the
  strong convergence $ L_n^{(1)} \Phi_n \phi$ to $L \phi$, it is enough to find a sequence of functions
  $\tilde{f}_M \in C_k^\infty(\Rd)$ satisfying
  \begin{equation}
    \label{StrongCondone}
    \lim_{M\to \infty} \| \tilde{f}_M -L \phi \|_{H} = 0
  \end{equation}
  and
  \begin{equation}
    \label{StrongCondtwo}
    \lim_{M \to \infty} \limsup_{n \to \infty} \| \Phi_n \tilde{f}_M - L_n^{(1)} \Phi_n \phi \|_{H_n} = 0. 
  \end{equation}
  Since $q \in L^2(\Rd)$, there exists a sequence $q_M \in C_k^\infty(\Rd)$ such that
  \begin{equation}
    \label{qmL2toq} 
    \lim_{M \to \infty} \norm{q_M -q}_H = 0.
  \end{equation} 
  We can then define the sequence $\tilde{f}_M \in C_k^\infty(\Rd)$ as
  \begin{equation}
    \tilde{f}_M := \phi * q_M  -\phi. \nn 
  \end{equation} 
  Notice that then
\begin{equation}
 \| \tilde{f}_M -L \phi \|_{H} = \norm{\phi * q_M  - \phi*q }_H \leq \norm{\phi}_1 \norm{q_M - q}_H, \nn 
\end{equation}
where in the last line we used Young's convolution inequality. From \eqref{qmL2toq} we obtain \eqref{StrongCondone}.\\

\noindent
For the second condition~\eqref{StrongCondtwo} we estimate 
\begin{align}\label{splittingnormStrongConv}
    \norm{\Phi_n \tilde{f}_M -  L_n^{(1)} \Phi_n \phi}_{H_n}^2 &=  \norm{\Phi_n \phi *q_M - \Phi_n \phi - L_n^{(1)} \Phi_n \phi}_{H_n}^2 \nn \\
    &\leq \norm{\Phi_n \phi *q_M - R_n \Phi_n \phi}_{H_n}^2 +\norm{\Phi_n \phi - S_n \Phi_n \phi}_{H_n}^2,
\end{align}
where
\begin{equation}
  R_n \Phi_n \phi(x/n) = \frac{1}{n^d} \sum_{y \in \Zd} q((y-x)/n) \phi(y/n) \nn 
\end{equation} 
and 
\begin{equation}
  S_n \Phi_n \phi(x/n) = \phi(x/n) \left(\frac{1}{n^d} \sum_{y \in \Zd} q((y-x)/n)\right). \nn 
\end{equation} 
We first deal with the second term in the right hand side of \eqref{splittingnormStrongConv}. Namely, 
\begin{align}
  \norm{\Phi_n \phi - S_n \Phi_n \phi}_{H_n}^2
  &= \frac{1}{n^d} \sum_{x \in \Zd} \phi^2(x/n) + \frac{1}{n^{3d}}
    \sum_{x, y_1, y_2 \in \Zd} \phi^2(x/n) q((y_1-x)/n) q((y_2-x)/n) \nn \\
  &\quad{}- \frac{2}{n^{2d}} \sum_{x, y_1 \in \Zd} \phi^2(x/n) q((y_1-x)/n), \nn 
\end{align}
which by~\eqref{massoneforq} vanishes as $n \to \infty$.

In the same way we also estimate the first term in the right hand side
of~\eqref{splittingnormStrongConv}. Namely,
\begin{align}
  \norm{\Phi_n \phi *q_M - R_n \Phi_n \phi}_{H_n}^2
  &= \frac{1}{n^d} \sum_{x \in \Zd} (\phi*q_M)^2(x/n) \nn \\
  &\quad{}+ \frac{1}{n^{3d}} \sum_{x, y_1, y_2 \in \Zd} \phi(y_1/n) q((y_1-x)/n)  \phi(y_2/n) q((y_2-x)/n) \nn \\
  &\quad{}- \frac{2}{n^{2d}} \sum_{x, y_1 \in \Zd} (\phi*q_M)(x/n) \phi(y_1/n) q((y_1-x)/n). \nn 
\end{align}
As $n \to \infty$, the previous expression converges to
\begin{equation*}
  \int (\phi*q_M)^2(x) \, dx + \int (\phi*q)^2(x) \, dx - 2 \int (\phi*q_M)(x) (\phi*q)(x) \, dx,
\end{equation*}
which vanishes as $M \to \infty$.
\end{proof}

\subsubsection*{Mosco convergence for $k$ particles}
The following result is a consequence of~\cite[Theorem 3.1 and Corollary 5.2]{ayala2024mosco}. By a procedure
completely analogous to the proof of Theorem 3.1 in \cite{ayala2024mosco}, we obtain the convergence of $k$
independent particles in our setting. Moreover, the same proof of Corollary 5.2 in the same reference applies
to our setting, giving us the following result.

\begin{theorem}
  \label{Moscoconvkparticles} 
  The sequence of Dirichlet forms corresponding to $k$-$\SIP$ particles in coordinate notation converges to
  the Dirichlet form associated to the process given by $k$ independent copies of the process with
  infinitesimal generator $L$.
\end{theorem}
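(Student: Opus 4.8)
The plan is to follow the two-step strategy of \cite{ayala2024mosco}: first establish Mosco convergence of the Dirichlet forms of $k$ \emph{independent} rescaled random walks, and then transfer this to the genuinely interacting $k$-\SIP{} form by showing that the inclusion interaction contributes a correction that vanishes in the limit. Throughout, the ambient Hilbert convergence is the $k$-fold tensor power $H_n^{\otimes k} \to H^{\otimes k}$ induced by the single-particle convergence $H_n \to H$ assumed in Assumption~\ref{RemarkConseqInvariance}, with projection operators $\Phi_n^{\otimes k}$.

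First I would write the coordinate generator~\eqref{gencoord} as $L_n^{(k)} = A_n^{(k)} + I_n^{(k)}$, where $A_n^{(k)} = \alpha \sum_{i=1}^k L_n^{(1)}$ realizes $k$ independent copies of the rescaled one-particle generator~\eqref{SIPonePartGenResc}, and $I_n^{(k)}$ collects the inclusion terms carrying the indicator $\mathbf 1_{x_j = x_i + r/n}$. At the level of Dirichlet forms this induces a splitting $\caE_n^{(k)} = \caE_n^{(k),\mathrm{free}} + \caE_n^{(k),\mathrm{int}}$, where the free part coincides with the form of $k$ independent walks and the interaction part is supported on coincidence configurations $x_i = x_j$; making this splitting rigorous, in particular identifying the correct reference measure, is exactly what the framework of \cite{ayala2024mosco} provides.

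For the free part, the analog of \cite[Theorem~3.1]{ayala2024mosco} applies: Mosco convergence tensorizes, so the single-particle Mosco convergence in Assumption~\ref{RemarkConseqInvariance} upgrades to Mosco convergence of $\caE_n^{(k),\mathrm{free}}$ to the Dirichlet form of $k$ independent copies of the process with generator $L$. Concretely, the Mosco~II recovery sequences are built as tensor products of the single-particle recovery sequences, while Mosco~I for the tensor form follows from the one-coordinate liminf bounds together with lower semicontinuity under weak convergence on the product Hilbert space.

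It then remains to show that $\caE_n^{(k),\mathrm{int}}$ is asymptotically negligible, so that $\caE_n^{(k)}$ and $\caE_n^{(k),\mathrm{free}}$ share the same Mosco limit; this is the content of the analog of \cite[Corollary~5.2]{ayala2024mosco}. The key point is that the interaction rates are concentrated on the event that two labelled particles occupy the same site, whose relative weight shrinks as the lattice is refined. The main obstacle is precisely this diagonal estimate in the long-range regime, and it enters through Mosco~II: since the interaction form is nonnegative, Mosco~I for $\caE_n^{(k)}$ is immediate from Mosco~I for the free form, whereas for Mosco~II one must exhibit recovery sequences along which $\caE_n^{(k),\mathrm{int}}$ vanishes. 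Taking the tensor-product recovery sequences from the free part, I would bound their interaction contribution using the two-sided bound of Assumption~\ref{AssumpNashIneq} together with the ultracontractivity~\eqref{Ultracontractivitity} of the limiting semigroup to control the on-diagonal heat kernel, thereby showing that $\caE_n^{(k),\mathrm{int}}$ evaluated along these sequences tends to zero. Once this vanishing is in place, both Mosco conditions for $\caE_n^{(k)}$ follow from those for $\caE_n^{(k),\mathrm{free}}$, yielding convergence to the $k$-independent-copies Dirichlet form.
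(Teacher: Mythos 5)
Your proposal takes essentially the same route as the paper: the paper's proof of Theorem~\ref{Moscoconvkparticles} consists precisely of invoking the analogue of \cite[Theorem 3.1]{ayala2024mosco} for the Mosco convergence of $k$ independent particles (your free/tensorization step) and then the analogue of \cite[Corollary 5.2]{ayala2024mosco} to transfer this to the interacting $k$-$\SIP$ form (your interaction-negligibility step). The additional details you supply are consistent with that strategy — including deferring the reference-measure subtlety to the cited framework — though the appeal to ultracontractivity~\eqref{Ultracontractivitity} is superfluous for the diagonal estimate, since the recovery sequences are deterministic tensor products of smooth compactly supported functions and the vanishing of the interaction term along them follows from Assumption~\ref{AssumpNashIneq} and lattice-point counting alone.
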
 

\begin{remark} 
  Notice that Theorem~\ref{Moscoconvkparticles} in particular implies that the sequence of $k$-particle
  semigroups $S_{n}^{(k)}(t) $ converges strongly, with respect to $k$-$\SIP$-Hilbert convergence (or
  $\IRW$-Hilbert convergence), to $k$-times the product of one particle semigroups associated to generator $L$
  given by \eqref{DefL}. Moreover in the same spirit than Proposition \ref{StrongConvLn1Gamman1} we have that
  for any $k$ finite, the sequence
  \begin{equation}
    \prod_{j=1}^k L_n^{(1)} \Phi_n \phi \nn 
  \end{equation} 
  converges strongly, with respect to $k$-$\SIP$-Hilbert convergence, to
  \begin{equation}
    \prod_{j=1}^k L \phi. \nn 
  \end{equation} 
  An analogous statement for the square operators $\Gamma_n$ also holds.
\end{remark}

\subsection{Uniqueness of distributions of the generalized time depedent Ornstein-Uhlenbeck process}
\label{AppendixUniquenessGTOUP}

For the readers convenience we include the proof of Proposition~\ref{PropoUniquenessGTOUP}.
We start with an auxiliary statement.
\begin{proposition} 
  For all $f \in L^1(\Rd)$, the mapping
  \begin{equation}
    t \mapsto \int_0^t \norm{ \sqrt{\Gamma_s^{\rho}f}}^2_{L^2(\Rd, \rho(s,x) \, dx)} \, ds,  \nn 
  \end{equation} 
  is continuous in $[0,T]$.
\end{proposition}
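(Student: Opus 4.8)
The plan is to set
\[
\Psi(s):=\norm{\sqrt{\Gamma_s^{\rho}f}}^2_{L^2(\Rd,\,\rho(s,x)\,dx)}=\int_{\Rd}\int_{\Rd}\bigl(\alpha+\rho(s,y)\bigr)\,q(y-x)\,\bigl(f(y)-f(x)\bigr)^2\,dy\,\rho(s,x)\,dx,
\]
observe that $\Psi\ge 0$, and reduce the claim to the uniform bound $\sup_{s\in[0,T]}\Psi(s)<\infty$. Indeed, writing $G(t):=\int_0^t\Psi(s)\,ds$, for $0\le t'\le t\le T$ nonnegativity of $\Psi$ gives $|G(t)-G(t')|=\int_{t'}^{t}\Psi(s)\,ds\le |t-t'|\,\sup_{s\in[0,T]}\Psi(s)$, so $G$ is Lipschitz on $[0,T]$ and in particular continuous. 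Thus no delicate time-regularity of $\rho$ is needed; only an $s$-uniform upper bound on the integrand matters.

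First I would control $\rho$ in $L^\infty$. Since $L=K_{q}-\Id$ with $q\ge 0$ and $\int_{\Rd}q=1$, the operator $L$ generates a Markov (pure-jump) semigroup $S_t^L$, which preserves positivity and is an $L^\infty$-contraction. As $\rho(s,\cdot)=S_{\alpha s}^L\rho_0$ with $\rho_0\ge 0$ and $\rho_0\in L^\infty(\Rd)$, the maximum principle yields $0\le \rho(s,x)\le\norm{\rho_0}_\infty$ for a.e.\ $x$ and every $s\in[0,T]$.

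Next I would insert these bounds into $\Psi$. Using $\alpha+\rho(s,y)\le\alpha+\norm{\rho_0}_\infty$ and $\rho(s,x)\le\norm{\rho_0}_\infty$,
\[
0\le\Psi(s)\le\bigl(\alpha+\norm{\rho_0}_\infty\bigr)\norm{\rho_0}_\infty\int_{\Rd}\int_{\Rd}q(y-x)\bigl(f(y)-f(x)\bigr)^2\,dy\,dx=\bigl(\alpha+\norm{\rho_0}_\infty\bigr)\norm{\rho_0}_\infty\,\caE(f),
\]
with $\caE(f)=2\langle f,(\Id-K_{q})f\rangle$ the Dirichlet form of~\eqref{DefDirFormMac}. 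This bound is independent of $s$, so $\sup_{s\in[0,T]}\Psi(s)<\infty$ as soon as $\caE(f)<\infty$; this finite-energy condition holds for the test functions relevant to the uniqueness proof (e.g.\ $f\in S(\Rd)$), which is the only case used. Combined with the first paragraph, this proves continuity (indeed Lipschitz continuity) of $G$ on $[0,T]$.

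The main obstacle is precisely the uniform-in-$s$ estimate $\sup_s\Psi(s)<\infty$: it rests on the $L^\infty$-contractivity of the Markov semigroup generated by $L$ (the maximum principle for $\rho$) together with finiteness of $\caE(f)$. If one preferred to argue continuity of $\Psi$ itself, and hence $G\in C^1$, one could additionally invoke strong continuity of $s\mapsto\rho(s,\cdot)$ in $L^1(\Rd)$ and dominated convergence with the same $s$-uniform dominating constant, but this refinement is not required for the stated continuity.
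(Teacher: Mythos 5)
Your proof is correct for every $f$ with $\caE(f)<\infty$, and its skeleton --- an $s$-uniform bound on the integrand $\Psi(s)$ forces Lipschitz, hence continuous, dependence of $t\mapsto\int_0^t\Psi(s)\,ds$ --- is also the implicit skeleton of the paper's one-line proof. The mechanism for the uniform bound is genuinely different, though. The paper simply invokes the ultracontractivity estimate \eqref{Ultracontractivitity}, $\norm{S_t^L f}_\infty \leq C\norm{f}_1$, which controls $\norm{\rho(s,\cdot)}_\infty$ by $\norm{\rho_0}_1$ and, more to the point, controls the sup-norm of the function fed into $\Gamma_s^{\rho}$: in the only place the proposition is used (the proof of Proposition~\ref{PropoUniquenessGTOUP}) that function is $g_s=S_{T-s}^L\phi$, and for a bounded argument one has $\Psi(s)\leq 4\norm{g_s}_\infty^2\left(\alpha+\norm{\rho(s,\cdot)}_\infty\right)\norm{\rho(s,\cdot)}_1$ with no energy condition at all. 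You instead use the elementary $L^\infty$-contraction (maximum principle) of the Markov semigroup together with $\rho_0\in L^\infty(\Rd)$ (available from Assumptions~\ref{HydroAssump} and~\ref{AssumpUniqueWeakSolu}), at the price of the finite-energy hypothesis $\caE(f)<\infty$. That restriction is a repair rather than a gap: for $f\in L^1(\Rd)$ that fails to be locally square-integrable, the integrand is identically $+\infty$ whenever $\rho(s,\cdot)\not\equiv 0$ (pair a set where $\rho(s,\cdot)\geq\delta$ with a ball on which $f\notin L^2$ and use $q>0$), so the statement as literally phrased for all of $L^1(\Rd)$ cannot hold, and the paper's argument does not reach it either. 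Two remarks would make your route cover the paper's actual use case completely: (i) the application involves the time-dependent argument $g_s=S_{T-s}^L\phi$, so you need $\sup_{s\in[0,T]}\caE(S_{T-s}^L\phi)<\infty$, which follows since the Dirichlet energy is non-increasing along the symmetric semigroup (or, alternatively, from $\norm{S_{T-s}^L\phi}_\infty\leq\norm{\phi}_\infty$ and the displayed bound for bounded arguments); (ii) your identification $\rho(s,\cdot)=S_{\alpha s}^L\rho_0$ tacitly uses that the semigroup solution is the unique weak solution, which is exactly Assumption~\ref{AssumpUniqueWeakSolu}. With these in place, your argument is self-contained and avoids the heat-kernel ultracontractivity input entirely; what the paper's route buys in exchange is uniform control for arguments that are merely $L^1$ before smoothing by the semigroup.
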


\begin{proof} 
It is enough to notice that by~\eqref{Ultracontractivitity} 
\begin{equation}
  \norm{S_t^L f}_{\infty} \leq C \norm{f}_{1}, \nn 
\end{equation} 
for some $C$ that does not depend on $f$ or $t$.
\end{proof} 

\begin{proofof}{Proposition~\ref{PropoUniquenessGTOUP}}
  By~\cite[Proposition 3.4]{revuz2013continuous} we know that for all $g \colon [0,T] \to S(\Rd)$ the process
  \begin{equation}
    Z_t(g) = \exp \left\lbrace i \left(Y_t(g) - \int_0^t Y_s((\partial_s + L) g) \, ds \right) + \int_0^t \norm{ \sqrt{\Gamma_s^{\rho}g}}^2_{L^2(\Rd, \rho(s,x) \, dx)} \, ds\right\rbrace \nn 
\end{equation} 
is a local Martingale. To make $\{\, Z_t(g) \mid t \in [0,T] \, \}$ an actual martingale, it is enough to have
\begin{equation}
  \E \left[ \sup_{ t \in [0,T]} \left|  Z_t(g) \right|  \right] < \infty, \nn 
\end{equation} 
which is a consequence of the mapping
\begin{equation}
  t \mapsto \int_0^t \norm{ \sqrt{\Gamma_s^{\rho}g}}^2_{L^2(\Rd, \rho(s,x) \, dx)} \, ds \nn 
\end{equation} 
being continuous. Hence $\{\, Z_t(g) \mid t \in [0,T] \, \}$ is a martingale. In particular, if we plug
$g(t,x)= S_{T-t}^L \phi(x)$ (where $S^L$ is the semi-group associated to $L$), we obtain that
\begin{equation}
  \bar{Z}_t(\phi) =  \exp \left\lbrace i Y_t(S_{T-t}^L \phi) + \int_0^t
    \norm{ \sqrt{\Gamma_s^{\rho} S_{T-s}^L \phi}}^2_{L^2(\Rd, \rho(s,x) \, dx)} \, ds \right\rbrace \nn 
\end{equation} 
is also a martingale. From here, after substitution of $\phi=\lambda \phi$ and $T=t$, we find that for
$s \leq t$
\begin{equation}
  \label{CharacYt} 
  \E \left[ \exp \left\lbrace i \lambda Y_t(\phi) \right\rbrace   \mid \caF_s \right]
  = \exp \left\lbrace i \lambda Y_s(S_{t-s}^L \phi) - \lambda^2 \int_s^t
    \norm{ \sqrt{\Gamma_u^{\rho} S_{t-u}^L \phi}}^2_{L^2(\Rd, \rho(u,x) \, dx)} \, ds \right\rbrace.
\end{equation} 
This implies that conditioning on $\caF_s$, $Y_t(\phi)$ is Gaussian with mean
\begin{equation*}
  Y_s(S_{t-s}^L \phi)
\end{equation*}
and variance 
\begin{equation*}
  2 \int_s^t \norm{ \sqrt{\Gamma_u^{\rho} S_{t-u}^L \phi}}^2_{L^2(\Rd, \rho(u,x) \, dx)} \, ds .
\end{equation*}
Then a standard Markov argument (see for example~\cite[Chapter 11]{kipnis1998scaling} or the explicit
computations in~\cite[Section 5.3]{erhard2020non}) guarantees the uniqueness of finite-dimensional
distributions.
\end{proofof} 




\bibliography{biblio} 
\bibliographystyle{plain} 
\end{document}